\newtheorem{theorem}{Theorem}[section]
\newtheorem{lemma}[theorem]{Lemma}
\newtheorem{definition}[theorem]{Definition}
\newtheorem{remark}[theorem]{Remark}
\newtheorem{proposition}[theorem]{Proposition}
\newtheorem{corollary}[theorem]{Corollary}
\newtheorem{example}[theorem]{Example}
\newcommand{\N}{\mathbb{N}}
\newcommand{\R}{\mathbb{R}}
\newcommand{\Sph}{\mathbb{S}} 
\newcommand{\Mm}{\mathcal{M}}
\newcommand{\tMm}{\widetilde\Mm}
\newcommand{\MH}{\mathcal{M_{\Haus{}}}}
\newcommand{\PB}{\mathcal{PB}}
\newcommand{\Ff}{\mathfrak{F}}
\newcommand{\Gg}{\mathfrak{G}}
\newcommand{\Hh}{\mathfrak{H}}
\newcommand{\Jj}{\mathfrak{J}}
\newcommand{\Ii}{\mathfrak{I}}
\newcommand{\eps}{\varepsilon}
\newcommand{\weakto}{\rightharpoonup}
\newcommand{\Haus}[1]{{\mathcal H}^{#1}} 
\newcommand{\Leb}[1]{{\mathcal L}^{#1}} 
\newcommand{\Per}{P} 
\newcommand{\mean}[1]{\,-\hskip-1.08em\int_{#1}} 
\newcommand{\DM}{\mathcal{DM}}
\newcommand{\redb}{\partial^{*}} 
\newcommand{\baru}{\bar{u}}
\newcommand{\barp}{\bar{p}}
\newcommand{\tV}{\widetilde{V}}
\newcommand{\Tr}{\mathrm{Tr}}
\newcommand{\de}{\partial}
\newcommand{\restrict}{\mathbin{\vrule height 1.4ex depth 0pt width
0.13ex\vrule height 0.13ex depth 0pt width 1.3ex}\,}
\newcommand{\B}{\mathcal{B}}
\newcommand{\cS}{{\mathcal S}}
\newcommand{\cP}{{\mathcal P}}
\newcommand{\T}{\mathfrak{T}}
\newcommand{\ds}{\displaystyle}
\DeclareMathOperator{\sign}{sign} 
\renewcommand{\Subset}{\subset\!\subset}
\renewcommand{\Supset}{\supset\!\supset}
\newcommand{\Lip}{\mathrm{Lip}}
  \let\div\relax
  \DeclareMathOperator{\div}{div}
\newcommand{\res}{\mathop{\hbox{\vrule height 7pt width .5pt depth 0pt
\vrule height .5pt width 6pt depth 0pt}}\nolimits}
\title{The prescribed mean curvature measure equation in non-parametric form}
\author[Gian Paolo Leonardi]{Gian Paolo Leonardi}
\address[Gian Paolo Leonardi]{Dipartimento di Matematica, Università di Trento, via Sommarive 14, IT-38123 Povo - Trento (Italy)}
\email{gianpaolo.leonardi@unitn.it}
\author[Giovanni E. Comi]{Giovanni E. Comi}
\address[Giovanni E. Comi]{Dipartimento di Matematica, Università di Bologna, Piazza di Porta San Donato 5, 40126 Bologna (Italy)}
\email{giovannieugenio.comi@unibo.it}%
\thanks{This work has been financially supported by GNAMPA - INdAM. Part of this work was undertaken while the second author visited the University of Trento. He would like to thank this institution for its support and warm hospitality during the visits. The authors are particularly grateful to Lorenzo Brasco and Giorgio Saracco for their support and encouragement during the preparation of the paper. The authors are also grateful to Virginia De Cicco for her comments on a preliminary version of the paper and to the anonymous referee for their detailed report, which helped us improve the final version of the paper.}
\subjclass[2020]{Primary: 49Q10. Secondary: 35J93, 49Q20, 46N10}
\keywords{prescribed mean curvature, functions of bounded variation, divergence-measure fields, Gauss--Green formulas, convex analysis}
\begin{document}

\begin{abstract}
We introduce a weak formulation of the non-parametric prescribed mean curvature equation with measure data and show the existence and several properties of $BV$ solutions under natural assumptions on the prescribed measure. Our approach does not rely on approximate or viscosity-type solutions and requires the combination of various ingredients, including Anzellotti's pairing theory for divergence-measure fields and its recent developments, a refinement of Anzellotti-Giaquinta approximation, and convex duality theory. We also prove a Gamma-convergence result valid for suitable smooth approximations of the prescribed measure, and a maximum principle for continuous weak solutions. We finally construct some examples of non-uniqueness, showing at the same time the need for the continuity assumption in the maximum principle, as well as an unexpected feature of weak solutions.
\end{abstract}

\maketitle

\tableofcontents

\section{Introduction}

Let $\Omega\subset \R^{n}$ be a bounded open set, and let 
$\mu$ be a signed Radon measure on $\Omega$ with finite total variation (denoted as $\mu \in \Mm(\Omega)$). Our objective is to study the \textit{Prescribed Mean Curvature Measure equation}
\begin{equation} \tag{PMCM}\label{eq:PMCM} \div \left (\frac{\nabla u}{\sqrt{1 + |\nabla u|^{2}}} \right) = \mu \quad \text{on} \ \Omega, \end{equation}
and particularly determine the appropriate weak formulation and the necessary and sufficient assumptions on the prescribed measure $\mu$ for the existence of solutions in $BV(\Omega)$.

\subsection{Context and literature review} 
The left-hand side of \eqref{eq:PMCM} is the \textit{minimal surface operator} applied to $u$. When $u\in C^{2}(\Omega)$, the minimal surface operator evaluates the mean curvature of the graph of $u$ at any point $(x,u(x))$. In the case $\mu = H\, dx$ with $H(x)$ Lipschitz-continuous on $\Omega$, equation \eqref{eq:PMCM} reduces to the classic prescribed mean curvature equation in non-parametric form, which has been studied by several authors in the past (see, e.g., \cite{Gerhardt1974, Gia74, Giu78, FG84, massari1974esistenza}). 

The mathematical theory of capillarity, with its long history starting from the seminal works of Young, Lagrange, Laplace, and Gauss, represents one of the main motivations for studying prescribed mean curvature problems. The classical theory of capillarity focuses on the case where the forces acting on the interface, besides surface tension, are smooth or at least Lipschitz. A notable exception is provided by the theory of minimal surfaces with discontinuous or thin obstacles, see \cite{nitsche1969variational} and \cite{giusti1972non, giusti1973CIME}, as well as the more recent \cite{FocardiSpadaro2018, FocardiSpadaro2020}, with references therein. However, a complete mathematical model of capillarity in the presence of singular forces - despite its theoretical and applicative interest, such as in modeling how some animals and/or mechanical devices can rest or move on the water surface without sinking  \cite{hu2010water} - has not been fully developed yet. 

Partial results in this direction have been obtained by Ziemer \cite{ziemer1995nonhomogeneous}, who proved the existence and local boundedness of $BV$ solutions of \eqref{eq:PMCM} when $\mu$ is a nonnegative measure satisfying $\mu(B_{r}(x)) \le C r^{q(n-1)}$ with a suitably small constant $C$, for any ball contained in $\Omega$ and for some $1<q\le \frac{n}{n-1}$. Another contribution has been given by Dai, Trudinger, and Wang \cite{dai2012mean}, who studied viscosity-type solutions of \eqref{eq:PMCM} when $\mu$ is nonnegative. However, the restrictions on the prescribed measure in \cite{ziemer1995nonhomogeneous} are so strong that most of the relevant and nontrivial features of the problem are lost (as they exclude measures concentrated on $(n-1)$-dimensional sets). Furthermore, the notion of viscosity solution proposed in \cite{dai2012mean}, besides being provided only for nonnegative measures, does not cover the entire class of variational solutions, thus missing some important non-uniqueness phenomena that are instead captured by the theory developed in this paper for the first time. 


A necessary condition for the existence of a classical solution to \eqref{eq:PMCM} can be derived by integrating the equation over smooth subdomains. Assuming that $u\in C^{2}(\Omega)$ satisfies \eqref{eq:PMCM}, we define 
\[
T(u)(x) := \frac{\nabla u(x)}{\sqrt{1+|\nabla u(x)|^{2}}}\,.
\]
Consequently, we have
\[
|\mu(A)| = \left|\int_{A} \div(T(u))(x)\, dx\right| = \left|\int_{\de A} T(u)(x)\cdot \nu_{A}(x)\, d\Haus{n-1}(x)\right| < \Per(A)
\]
for any nonempty $A\Subset \Omega$ with smooth boundary, 
where $\nu_{A}$, $\Haus{n-1}$, and $\Per(A)$ denote the outer normal to $\de A$, the Hausdorff measure of dimension $n-1$, and the perimeter of $A$, respectively. Note that the inequality is strict because $|T(u)(x)| <1$ for all $x\in \Omega$. In the seminal paper by Giusti \cite{Giu78}, it was shown that when $\mu = H\, dx$ with $H$ and $\de\Omega$ both Lipschitz, the necessary condition
\begin{equation} \label{eq:NCclassic} 
\left|\int_{A}H(x)\, dx\right| < \Per(A),\qquad \forall\, A\Subset \Omega\quad \text{nonempty, open and smooth,}
\end{equation}
is also sufficient for the existence of solutions. Recently, an extension of Giusti's result to a larger class of domains called \textit{weakly-regular} has been achieved \cite{LS18a}. The proof method involves distinguishing between two cases: \textit{non-extremal} and \textit{extremal}. In the non-extremal case, there exists a constant $0<L<1$ such that the aforementioned condition \eqref{eq:NCclassic} is replaced by the stronger
\begin{equation}\label{eq:NEclassic} 
\left|\int_{A}H(x)\, dx\right| \le L\, \Per(A),\qquad \forall\, A\Subset \Omega\quad \text{smooth.}
\end{equation}
In the extremal case, \eqref{eq:NCclassic} holds true and the inequality is saturated over the entire $\Omega$:
\[
\left|\int_{\Omega}H(x)\, dx\right| = \Per(\Omega)\,.
\]
In the first, non-extremal case, the variational nature of the problem is exploited, and indeed a solution can be found by minimizing an auxiliary energy functional of the form
\[
\Jj[u] =  \sqrt{1+|D u|^{2}}(B) + \int_{\Omega} u \, H \, dx
\]
among $BV$ functions defined on a ball $B\Supset \Omega$ that coincide with a given function $\phi\in W^{1,1}(B)$ (a \textit{weak Dirichlet datum}) on $B\setminus \Omega$. Here, the term $\sqrt{1+|D u|^{2}}(B)$ denotes the area functional, that is, the generalized area of the graph of $u$ in the cylinder $B \times \R$.
The functional $\Jj$ is lower-semicontinuous with respect to $L^{1}$ convergence since it is the sum of a lower semicontinuous functional and a continuous integral term. The key requirement here is the compactness of sequences of $BV$ functions with equibounded energy, which directly follows from the non-extremality assumption \eqref{eq:NEclassic}.

In the extremal case, the proof involves approximating $\Omega$ from the inside using a sequence of smooth subdomains falling within the non-extremal case. It is then shown that the corresponding sequence of variational solutions admits a locally convergent subsequence, up to vertical translations. Although the extremal case, along with its characterization, is more intricate due to the lack of compactness and the possibility of having solutions with infinite $BV$ norm, it is evident that the non-extremal case serves as the fundamental first step in the whole proof. This paper is thus devoted to the study of the non-extremal case, even though in Section \ref{sec:admissible} we provide a generalized definition also of the extremality condition.

\subsection{Description of the main results} 
The primary goal of the paper is to propose the appropriate weak formulation of \eqref{eq:PMCM}. We will focus on the case when $\Omega$ is weakly regular (or even Lipschitz) and $\mu$ is a \textit{non-extremal measure}, meaning there exists $0<L<1$ such that $|\mu(A)|\le L\, \Per(A)$ for all open sets $A\Subset \Omega$ with smooth boundary. Another equivalent definition of non-extremality will be given in Section \ref{sec:admissible}. 
The existence and properties of solutions to the weak formulation, as well as the variational approximation issue, pose significant technical challenges, as we will explain later. 

To identify a natural class of measures, we observe that the condition of being the distributional divergence of a sub-unitary (or bounded) vector field imposes restrictions on $\mu$, particularly that $|\mu|$ is absolutely continuous with respect to the $(n-1)$-dimensional Hausdorff measure (see \cite{CF1}). Thus we conveniently introduce the class $\MH(\Omega)$ of such measures. In other words, we must exclude measures that are (partially) concentrated on sets of Hausdorff dimension less than $n-1$. Further conditions on $\mu$, such as the property of being \textit{admissible}, that is, the fact that its total variation $|\mu|$ belongs to the dual of the space $BV(\Omega)$ (see Definition \ref{def:muadmissible} and \cite{ComiLeo} for more details), will be discussed later on.

The first step in deriving the weak formulation of \eqref{eq:PMCM} relies on an alternative characterization of the vector field $T(u)$. Following the approach of Scheven and Schmidt \cite{scheven2016bv}, one notes that $T(u)$ is the unique sub-unitary vector field that satisfies the identity
\begin{equation*} 
T(u) \cdot \nabla u = \sqrt{1 + |\nabla u|^{2}} - \sqrt{1 - |T(u)|^{2}}
\end{equation*}
in a pointwise sense. This can be shown through a straightforward computation (assuming $u\in C^{1}$ for simplicity). Furthermore, the right-hand side of the identity can be interpreted as a difference between measures: the area of the graph minus the absolutely continuous measure with an $L^{\infty}$ density given by $\sqrt{1-|T(u)|^{2}}$. Therefore, it is natural to assign a consistent measure-theoretic meaning to the scalar product between $T(u)$ and $\nabla u$. This is achieved through the theory of divergence-measure fields and their pairings with the distributional gradients of weakly differentiable functions, initially proposed in the seminal work of Anzellotti {and then developed by many authors, see \cite{Anzellotti_1983,ACM,CF1,comi2017locally,Phuc_Torres,scheven2016bv,crasta2017anzellotti,crasta2019pairings,comi2022representation,crasta2022variational,Silhavy1,Frid2}. In particular, our recent contribution \cite{ComiLeo} contains an up-to-date list of references and new results that further extend the theory, and at the same time are crucial for this paper. 

It is worth observing that the theory of pairings between essentially bounded divergence-measure fields and scalar functions with bounded variation has already proven useful when studying the boundary behavior of solutions to prescribed mean curvature equations in domains with low regularity \cite{MR4385590, LS18a}, and in relation with super and sub-solutions to the same kind of equations \cite{scheven2016bv, scheven2018dual}. See also \cite{giachetti2023bounded} for another recent application of the theory of pairings in the presence of non-linear forcing terms.

For the reader's convenience, we briefly introduce the so-called $\lambda$-pairing, that will be needed later on. We denote as $\DM^{\infty}(\Omega)$ the class of $L^{\infty}$ vector fields with divergence measure. Let $F\in \DM^{\infty}(\Omega)$, $u\in BV(\Omega)\cap L^{\infty}(\Omega)$, and $\lambda:\Omega\to [0,1]$ be a given Borel function. One can define the $\lambda$-pairing distribution as follows:
\[
(F, Du)_{\lambda} := \div(uF) - u^{\lambda}\, \div F\,,
\]
where $u^{\lambda}$ is defined $\Haus{n-1}$-almost everywhere as the convex combination of $u^{+}$ and $u^{-}$ (the upper and lower approximate limits of $u$) using the coefficients $\lambda$ and $1-\lambda$, respectively. Note that the $\lambda$-pairing is a measure \cite{crasta2019pairings}. The need for this generalization of the classical pairing (i.e., the one obtained when $\lambda \equiv \frac{1}{2}$) is soon explained by considering the counterpart of the auxiliary functional $\Jj[u]$ in the case of a prescribed mean curvature measure:
\[
\Jj_{\mu}[u] := \sqrt{1+|D u|^{2}}(B) + \int_{\Omega} u\, d\mu\,,
\] 
where $B$ is a fixed ball compactly containing $\Omega$.
The problem with this expression is that $u\in BV(B)$ cannot be uniquely determined on null sets, which can be problematic when $\mu$ has a nontrivial singular part with respect to the Lebesgue measure, leading to uncertainty about the meaning of the integral of $u$ with respect to $\mu$. Thus, we need to choose a representative of $u$ defined almost everywhere with respect to $\Haus{n-1}$. This can be achieved by selecting an appropriate Borel function $\lambda$, which becomes an additional variable in the variational formulation:
\[
\Jj_{\mu}[u,\lambda] := \sqrt{1+|D u|^{2}}(B) + \int_{\Omega} u^{\lambda}\, d\mu\,.
\]
Given $\lambda$, we can provide the following characterization of minimizers of $\Jj_{\mu}[\cdot,\lambda]$ in terms of a weak formulation of \eqref{eq:PMCM}.

\begin{definition}\label{def:PMCM}
We say that $u \in BV(\Omega)$ is a \textit{weak solution to the prescribed mean curvature measure equation} if there exist $T \in L^{\infty}(\Omega;\R^{n})$ and a Borel function $\lambda:\Omega\to [0,1]$ such that
\begin{align} 
\label{L_infty_bound_eq} \|T\|_{L^{\infty}(\Omega; \R^{n})} & \le 1,\\
\label{divergence_eq} \div T & = \mu\, \text{ on } \Omega, \\
\label{pairing_eq} (T, Du)_{\lambda} & = \sqrt{1 + |D u|^{2}} - \sqrt{1 - |T|^{2}} \Leb{n} \, \text{ on } \Omega,
\end{align}
where the last two identities involve scalar Radon measures in $\Mm(\Omega)$, and $(T,Du)_{\lambda}$ denotes the $\lambda$-pairing between $T$ and $Du$.
\end{definition}

Through the rest of the paper, we say that $u$ is a weak solution to \eqref{eq:PMCM} if it satisfies the conditions of Definition \ref{def:PMCM}.

We can minimize $\Jj_{\mu}[u,\lambda]$ first with respect to $\lambda$, which corresponds to choosing $\lambda=\lambda_{\mu}$ as the characteristic function of a Borel set satisfying $(1-\lambda)\mu = \mu^{+}$, where $\mu^\pm$ are the positive and negative parts of $\mu$, respectively (see Lemma \ref{lem:min_J_lambda}). In other words, if $\mu = \mu^{+} - \mu^{-}$, then $\lambda_{\mu}$ is such that:
\[
\int_{\Omega} u^{\lambda_{\mu}}\, d\mu = \int_{\Omega} u^{-}\, d\mu^{+} - \int_{\Omega} u^{+}\, d\mu^{-}\,.
\]
This provides the natural candidate $\lambda$-pairing to be used in the weak formulation of \eqref{eq:PMCM}, and we write $\Jj_{\mu}[u,\lambda_\mu] = \Jj_{\mu}[u]$.

One can demonstrate that, if $u$ is a weak solution of \eqref{eq:PMCM}, such that $u\in W^{1,1}(\Omega)$, then the vector field $T$ satisfying \eqref{L_infty_bound_eq}--\eqref{divergence_eq} coincides with $\frac{\nabla u}{\sqrt{1+|\nabla u|^{2}}}$ almost everywhere on $\Omega$ (we will prove this result in Section \ref{sec:further1} for any weak solution in $BV(\Omega)$). Moreover, if $\mu = H\, dx$ with $H$ being Lipschitz (as assumed in \cite{Giu78}), then $u\in C^{2}(\Omega)$ and the weak formulation reduces to the classical one. Additionally, using \eqref{L_infty_bound_eq} and \eqref{divergence_eq} along with the divergence theorem, we obtain a necessary condition for the existence of weak solutions of \eqref{eq:PMCM}:
\begin{equation} \label{eq:intro_nec_cond_A}
|\mu(A)| \le \Per(A)\,,\quad \forall\, A\Subset \Omega\ \text{smooth.}
\end{equation}
However, it should be noted that we cannot enforce strict inequality as in \eqref{eq:NCclassic} because in this general case we cannot guarantee that $|T|<1$ $\Haus{n-1}$-a.e. on $\Omega$ (even when $u\in W^{1,1}(\Omega)$, it is possible to have $|T|=1$ on some Lebesgue null set). More general versions of the necessary condition, which also depend on the (weak) regularity of $\Omega$, are proven in Lemma \ref{necessary_cond}.

To demonstrate the existence of weak solutions of \eqref{eq:PMCM} we employ the direct method of Calculus of Variations, which involves proving coercivity and lower semicontinuity for the functional 
\[
\Jj_{\mu}[u] := \sqrt{1+|Du|^{2}}(B) + \int_{\Omega}u^{-}\, d\mu^{+} - \int_{\Omega} u^{+}\, d\mu^{-}\,,
\]
see Section \ref{sec:coercivity} and Theorem \ref{thm:sci}. This functional is restricted to the class of $BV$ functions on $B$ that satisfy a weak Dirichlet boundary condition determined by a prescribed function $\phi\in W^{1,1}(B)$. Notably, we establish lower semicontinuity for measures of a specific form; that is, admissible non-extremal measures $\mu \in \MH(\Omega)$ which can be decomposed into three parts: an absolutely continuous part with density in $L^q(\Omega)$ for some $q > n$, a compactly supported divergence-measure of a vector field which is continuous outside of a compact $\Haus{n-1}$-negligible set, and a Radon measure concentrated on another compact set, which is Lebesgue-negligible and disjoint from the discontinuity set of the aforementioned vector field (for more details, see Definition \ref{def:tMomega}). These assumptions play essential roles in the proof of the lower semicontinuity of $\Jj_\mu$ (see Theorems \ref{thm:liminf_mu_2} and \ref{thm:liminf_mu_3}). However, it is natural to expect that the result holds for any admissible non-extremal measure. The main challenge in proving lower semicontinuity arises from the fact that while the area term $\sqrt{1+|Du|^{2}}(B)$ is lower semicontinuous with respect to $L^{1}$ convergence, the other integral terms in $\Jj_{\mu}[u]$ are not, except for the part with $q$-summable density, for $q > n$. To overcome this issue, we exploit the non-extremality assumption on $\mu$ to show that any loss of semicontinuity in the integral terms is offset by a continuity drop for the area term. The part of the measure which is the divergence of a continuous vector field (up to an $\Haus{n-1}$-negligible compact set) can be dealt with by exploiting integration by parts formulas and Leibniz rules for such field and $BV$-functions. Finally, in the case of the singular part of the measure $\mu$, we accomplish this trading between the area functional and the integral term by combining several techniques. First, we start by assuming that lower semicontinuity does not hold, and we consider a sequence of functions obtained through contradiction, which is converging in the $L^{1}$ sense to a limit function $u\in BV(\Omega)$. Then, we exploit our $\lambda$-approximation result (Theorem \ref{thm:smooth_lambda_approx}) proved in the companion paper \cite{ComiLeo} to show that we can take a sequence of smooth functions, without loss of generality (see also Lemma \ref{lem:density_energy}). After this step, we apply the uniform truncation to such sequence, via Lemma \ref{lem:apriori1} and Lemma \ref{lem:apriori2}. Then, we employ a finer truncation argument by relying on Lemma \ref{lem:sopraupiueps}, and use the non-extremality of the measure $\mu$ by passing to the super and sublevel sets of the sequence of functions, and by exploiting the coarea and Cavalieri's formulas.

Once a minimizer $\baru$ of the functional $\Jj_{\mu}$ is found, in Section \ref{sec:duality} we prove that it is a weak solution of \eqref{eq:PMCM}. This is accomplished through a duality argument that establishes the existence of a vector field $T\in \DM^\infty(\Omega)$ such that the pair $(\baru,T)$ satisfies equations \eqref{L_infty_bound_eq}, \eqref{divergence_eq} and \eqref{pairing_eq} with $\lambda = \lambda_{\mu}$.

In the rest of the paper, we deal with other aspects of the prescribed mean curvature measure problem. In particular, we establish a Gamma-convergence result for a sequence of functionals $\Jj_{\mu_{j}}$ (Theorem \ref{thm:gammaconv}), where $\mu_{j}$ is a suitable sequence of $C^{\infty}$ functions on $\Omega$ and the measures $\mu_{j}\Leb{n}$ converge to $\mu$ in weak-$\ast$ sense, as $j\to+\infty$. The construction of $\mu_{j}$ from $\mu$ involves a delicate Anzellotti-Giaquinta type regularization of a vector field $F$ such that $\div F = \mu$ (see Proposition \ref{prop:muapprox}, which is proved in \cite{ComiLeo}). In particular, this construction ensures that the measures $\mu_j$ are uniformly non-extremal for $j$ large enough, and this implies the uniform coercivity of $\Jj_{\mu_{j}}$ for $j$, from which we deduce the $\Gamma$-compactness property stated in Proposition \ref{prop:gammacompact}. It is up to now an open question whether the simple weak-$*$ convergence of $\mu_{j}$ to $\mu$ is enough for $\Jj_{\mu_{j}}$ to Gamma-converge to $\Jj_{\mu}$.

Then, we prove the uniqueness of the vector field $T=T(u)$ associated with a weak solution $u$ (see Theorem \ref{thm:Tuformula}), we list some identities satisfied by the singular part of the $\lambda$-pairing, and we obtain a Lipschitz regularity result for $u$ under some additional assumptions on the field $T$, see Section \ref{sec:further1}.

We also prove a maximum principle valid for continuous weak solutions (Theorem \ref{thm:maxprinc}), which implies the uniqueness of such solutions having the same trace at the boundary of $\Omega$ (Corollary \ref{corol:uniqueness}). We stress that the continuity assumption is crucial, see Examples \ref{ex:one_dim_non_uniq_non_Schmidt} and \ref{ex:two_balls}. 

Then, we provide some concrete examples of solutions. First, we discuss the one-dimensional case, for which we prove a convexity result for solutions in the case of a nonnegative measure $\mu$, and we provide examples of discontinuous and non-unique solutions in the case $\mu$ does not have a constant sign. In the $n$-dimensional case for $n \ge 2$, we characterize radial solutions on annuli with a prescribed radial measure supported on a sphere. We utilize this preliminary analysis to construct a one-parameter family of radial weak solutions of \eqref{eq:PMCM}, with the measure $\mu$ nonnegative, radial, and supported on two spheres. These solutions have the same trace at the boundary of $\Omega$. We remark that these non-uniqueness phenomena, Examples \ref{ex:one_dim_non_uniq_non_Schmidt} and \ref{ex:two_balls}, are completely new compared to the classical theory. 

Finally, in the Appendix we provide a technical decay result for the $BV$-energy of local truncates of $BV$ functions, that we were unable to find in the existing literature.

Furthermore, we believe that the theory developed in this paper can be successfully applied to more general classes of operators with linear growth at infinity, such as
\[
{\mathcal L}u = \div \frac{\nabla u}{\sqrt{a^{2} + |\nabla u|^{2}}}
\] 
with $a \in [0,1]$. Notably, the extreme values $0$ and $1$ correspond respectively to the $1$-Laplacian and the minimal surface operator. Future works will focus on exploring interesting topics such as the optimal regularity of weak solutions, extending the semicontinuity result to all admissible non-extremal measures, and analyzing the extremal case.
\medskip

After publishing the initial version of the paper on ArXiv, we were informed by Thomas Schmidt about his simultaneous preprint \cite{schmidt2023isoperimetric}. His preprint discusses semicontinuity and existence results for the prescribed mean curvature problem with measure data in the parametric sense of Massari. The technique used by Schmidt is quite different from ours, even though based on a similar compensation between the perimeter and the measure $\mu$.     

\section{Preliminaries}
\label{sec:prelim}

We denote by $\Leb{n}$ the Lebesgue measure, and by $\Haus{m}$ the $m$-dimensional Hausdorff measure, for $m \in [0, n]$, although we shall focus on the case $m = n-1$. Given $x \in \R^n$ and $r > 0$, we denote by $B_r(x)$ the open ball centered in $x$ with radius $r$, and, if $x = 0$, we simply write $B_r$. Unless otherwise stated, $\Omega \subset \R^n$ is an open set. The space of finite signed Radon measures on $\Omega$ is denoted as $\Mm(\Omega)$, and we set 
\begin{equation}\label{def:MH}
\MH(\Omega) := \{ \mu \in \Mm(\Omega) : |\mu|(B) = 0 \text{ for all Borel sets } B \subset \Omega \text{ such that } \Haus{n-1}(B) = 0 \}\,,
\end{equation}
where $|\mu|$ denotes the total variation of $\mu$. 
Any $\mu \in \Mm(\Omega)$ can be written as $\mu = \mu^{+} - \mu^{-}$, where $\mu^+$ and $\mu^-$ are the positive and the negative part of $\mu$, respectively. Moreover, there exist mutually disjoint Borel sets $\Omega_+,\Omega_-$ such that $\Omega = \Omega_+ \cup \Omega_{-}$, and $\mu_{+} = \mu \restrict \Omega_+$, $\mu_{-} = -\mu \restrict \Omega_-$. We also have $|\mu| = \mu^+ + \mu^-$. In addition, by Lebesgue-Besicovitch differentiation theorem, for $|\mu|$-a.e. $x \in \Omega$ we have
\begin{equation} \label{eq:mu_pm_differentiation}
\frac{d \mu^\pm}{d |\mu|}(x) = \begin{cases} 1 & \text{ if } x \in \Omega^{\pm} \\
0 & \text{ if } x \in \Omega^{\mp}.
\end{cases}
\end{equation}

Given $\mu \in \Mm(\Omega)$, by Radon-Nikodym and Lebesgue's decomposition theorems we can write $\mu = \mu^{ac} + \mu^s$, where $\mu^{ac}= g \Leb{n}$, for some $g \in L^1(\Omega)$, is the absolutely continuous part and $\mu^s$ is the singular part of $\mu$. 

We say that $u \in L^1(\Omega)$ is a function of bounded variation, and we write $u \in BV(\Omega)$, if its distributional gradient $Du$ is a vector-valued Radon measure whose total variation $|Du|$ is a finite measure on $\Omega$. $BV(\Omega)$ is a Banach space once equipped with the norm
$$ \|u\|_{BV(\Omega)} = \|u\|_{L^1(\Omega)} + |Du|(\Omega).$$

Given $u \in BV(\Omega)$, we say that a sequence $(u_j)_{j \in \N}$ converges to $u$ in $BV(\Omega)$-strict if
\begin{equation*}
\lim_{j \to + \infty} \|u - u_j\|_{L^1(\Omega)} + \big| |Du|(\Omega) - |Du_j|(\Omega) \big | = 0.
\end{equation*}

For a given $u \in BV(\Omega)$ we define the measure $\sqrt{1 + |Du|^{2}}$ on any open set $U\subset \Omega$ as 
\begin{equation*} 
\int_{U} \sqrt{1 + |Du|^{2}} := \sup \left \{ \int_{\Omega} \eta + u \div \phi \, dx : (\phi, \eta) \in C^{1}_{c}(U; \R^{n} \times \R), | (\phi, \eta) | \le 1 \right \}, 
\end{equation*}
see \cite{giusti1984minimal}. If we denote the absolutely continuous part of $Du$ by $\nabla u \, \Leb{n}$, and the singular part by $D^{s} u$, we have
\begin{equation} \label{area_factor_decomposition_eq} 
\sqrt{1 + |Du|^{2}} = \sqrt{1 + |\nabla u|^{2}} \Leb{n} + |D^{s} u|. 
\end{equation}
Given a measurable set $E$, the perimeter of $E$ in $\Omega$ is defined by $\Per(E;\Omega) = |D\chi_{E}|(\Omega)$. When $\Omega = \R^{n}$ we simply write $\Per(E)$. We refer to \cite{AFP, maggi2012sets} for the definition and properties of $\redb E$, the reduced boundary of $E$, for which one has $\Per(E;\Omega) = \Haus{n -1}(\redb E \cap \Omega)$.

When $\Omega$ has a bounded and Lipschitz boundary, we denote by ${\rm Tr}_{\partial \Omega}(u)$ the trace of $u$ over $\partial \Omega$ and recall that the trace operator ${\rm Tr}_{\partial \Omega}:BV(\Omega)\to L^{1}(\partial \Omega; \Haus{n-1})$ is linear, surjective and continuous with respect to the topology induced by the strict convergence (see for instance \cite[Theorem 3.88]{AFP}). In the following theorem, we recall the well-known Poincar\'e-Wirtinger and Poincar\'e-trace inequalities.
\begin{theorem}\label{thm:poincare}
Let $\Omega\subset \R^{n}$ be an open, bounded and connected set with Lipschitz boundary. Assume that $G$ is a linear and continuous functional on $BV(\Omega)$, such that $G(\chi_{\Omega})=1$. Then there exists a constant $C>0$ depending only on $\Omega$, such that for all $u\in BV(\Omega)$ one has the Poincar\'e-Wirtinger inequality
\begin{equation}\label{eq:poincarewirt}
\|u - G(u)\|_{L^{\frac{n}{n-1}}(\Omega)} \le C |Du|(\Omega)\,.
\end{equation} 
As a particular case of \eqref{eq:poincarewirt}, we have the following Poincar\'e-trace inequality: if $E\subset \Omega$ has finite perimeter in $\Omega$, then
\begin{equation}\label{eq:poincaretrace}
\min\left\{\|{\rm Tr}_{\partial \Omega}(\chi_{E})\|_{L^{1}(\partial\Omega; \Haus{n-1})},\ \|{\rm Tr}_{\partial \Omega}(\chi_{\Omega\setminus E})\|_{L^{1}(\partial\Omega; \Haus{n-1})}\right\} \le C'\, \Per(E;\Omega)\,,
\end{equation}
where $C' = C |\Omega|^{\frac{1-n}n} \Per(\Omega)$ is scale-invariant. When $\Omega$ is a ball, we will write $C_{PT}$ in place of $C'$.
\end{theorem}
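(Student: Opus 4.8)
The plan is to prove \eqref{eq:poincarewirt} by a compactness--contradiction argument — the only new feature compared with the textbook Poincar\'e--Wirtinger inequality being the replacement of the mean value by the arbitrary normalized continuous functional $G$ — and then to deduce \eqref{eq:poincaretrace} from \eqref{eq:poincarewirt} by taking $G$ to be a suitably normalized boundary-trace functional and testing with $u=\chi_{E}$.

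For \eqref{eq:poincarewirt} I would suppose it fails for the given $G$ and choose $u_{k}\in BV(\Omega)$ with $\|u_{k}-G(u_{k})\|_{L^{\frac{n}{n-1}}(\Omega)}=1$ and $|Du_{k}|(\Omega)\to 0$. Setting $v_{k}:=u_{k}-G(u_{k})\chi_{\Omega}$, linearity of $G$ and $G(\chi_{\Omega})=1$ give $G(v_{k})=0$, while $\|v_{k}\|_{L^{\frac{n}{n-1}}(\Omega)}=1$, $|Dv_{k}|(\Omega)\to 0$ and, by H\"older, $\|v_{k}\|_{L^{1}(\Omega)}\le|\Omega|^{1/n}$, so $(v_{k})$ is bounded in $BV(\Omega)$. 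Since $\Omega$ is bounded with Lipschitz boundary, the $BV$ compactness theorem and lower semicontinuity of the total variation (see \cite{AFP}) give, along a subsequence, $v_{k}\to v$ in $L^{1}(\Omega)$ with $|Dv|(\Omega)=0$, hence $v$ constant because $\Omega$ is connected. As $D(v_{k}-v)=Dv_{k}$, we get $\|v_{k}-v\|_{BV(\Omega)}\to 0$, so continuity of $G$ forces $0=G(v_{k})\to G(v)=v$, i.e. $v\equiv 0$; but then the continuous Sobolev embedding $BV(\Omega)\hookrightarrow L^{\frac{n}{n-1}}(\Omega)$ gives $1=\|v_{k}\|_{L^{\frac{n}{n-1}}(\Omega)}\le C_{S}\|v_{k}-v\|_{BV(\Omega)}\to 0$, a contradiction. (If one prefers an explicit, scale-covariant constant, one can instead use the identity $u-G(u)=(u-u_{\Omega})-G(u-u_{\Omega})$ with $u_{\Omega}=|\Omega|^{-1}\int_{\Omega}u$ and bound $|G(u-u_{\Omega})|\le\|G\|_{(BV(\Omega))^{*}}\|u-u_{\Omega}\|_{BV(\Omega)}$ via the classical $L^{1}$- and $L^{\frac{n}{n-1}}$-Poincar\'e inequalities.)

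For \eqref{eq:poincaretrace} I would apply \eqref{eq:poincarewirt} with $G(u):=\Per(\Omega)^{-1}\int_{\partial\Omega}\Tr_{\partial\Omega}(u)\,d\Haus{n-1}$, which is linear and continuous on $BV(\Omega)$ because $\Tr_{\partial\Omega}$ is linear and continuous into $L^{1}(\partial\Omega)$ and $\Haus{n-1}(\partial\Omega)=\Per(\Omega)<\infty$, and satisfies $G(\chi_{\Omega})=1$ since $\Tr_{\partial\Omega}(\chi_{\Omega})=1$ $\Haus{n-1}$-a.e. on $\partial\Omega$. Testing with $u=\chi_{E}$ and writing $t:=G(\chi_{E})=\Per(\Omega)^{-1}\|\Tr_{\partial\Omega}(\chi_{E})\|_{L^{1}(\partial\Omega)}\in[0,1]$ (the trace of $\chi_{E}$ has values in $[0,1]$), linearity of the trace gives $\Tr_{\partial\Omega}(\chi_{\Omega\setminus E})=1-\Tr_{\partial\Omega}(\chi_{E})$, hence $\|\Tr_{\partial\Omega}(\chi_{\Omega\setminus E})\|_{L^{1}(\partial\Omega)}=\Per(\Omega)(1-t)$ and the left-hand side of \eqref{eq:poincaretrace} equals $\Per(\Omega)\min\{t,1-t\}$. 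It then remains to bound this from below by the left-hand side of \eqref{eq:poincarewirt}: by H\"older $\|\chi_{E}-t\|_{L^{\frac{n}{n-1}}(\Omega)}\ge|\Omega|^{-1/n}\|\chi_{E}-t\|_{L^{1}(\Omega)}$, and the elementary identity $\|\chi_{E}-t\|_{L^{1}(\Omega)}=|E|(1-t)+|\Omega\setminus E|\,t=|\Omega|\,t+|E|(1-2t)$ gives $\|\chi_{E}-t\|_{L^{1}(\Omega)}\ge|\Omega|\min\{t,1-t\}$ (directly for $t\le\tfrac12$, and for $t\ge\tfrac12$ by exchanging $E$ with $\Omega\setminus E$). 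Combining these estimates and multiplying through by $\Per(\Omega)$ yields \eqref{eq:poincaretrace} with $C'=C|\Omega|^{\frac{1-n}{n}}\Per(\Omega)$; its scale-invariance then follows by checking how $|\Omega|$, $\Per(\Omega)$, $\Per(E;\Omega)$ and the optimal constant in \eqref{eq:poincarewirt} for this $G$ transform under dilations $x\mapsto rx$.

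I do not expect a genuine obstacle here. Modulo the standard $BV$ compactness and Sobolev embedding theorems on bounded Lipschitz domains, the only points deserving care are verifying that the boundary-trace functional is an admissible choice of $G$ (linearity and continuity of $\Tr_{\partial\Omega}$, together with $\Haus{n-1}(\partial\Omega)<\infty$) and the bookkeeping needed to exhibit $C'$ in the stated scale-invariant form.
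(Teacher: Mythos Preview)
Your proposal is correct and follows essentially the same route as the paper. The paper's own proof is extremely terse: for \eqref{eq:poincarewirt} it simply cites \cite[Theorem 5.11.1]{Ziemer} (your compactness--contradiction argument is precisely the standard proof of that result), and for \eqref{eq:poincaretrace} it chooses the very same boundary-trace functional $G$ you do and says the estimate ``follows from \eqref{eq:poincarewirt} and the concavity of $t\mapsto t^{1-1/n}$'' --- which, unpacked, is exactly your H\"older/Jensen step $\|\chi_E-t\|_{L^{n/(n-1)}}\ge |\Omega|^{-1/n}\|\chi_E-t\|_{L^1}$ followed by the elementary $L^1$ computation you carry out.
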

\begin{proof}
For the proof of \eqref{eq:poincarewirt} we refer to \cite[Theorem 5.11.1]{Ziemer}. The proof of \eqref{eq:poincaretrace} easily follows from \eqref{eq:poincarewirt} and the concavity of the map $t \to t^{1 - \frac{1}{n}}$, where $u=\chi_{E}$ and 
\[
G(u) = \Per(\Omega)^{-1}\int_{\partial \Omega} {\rm Tr}_{\partial \Omega}(u)\, d\Haus{n-1}\,.
\]
\end{proof}

We say that a function $u \in L^{1}_{\rm loc}(\Omega)$ has approximate limit at $x \in \Omega$ if there exists $z \in \R$ such that
\begin{equation} \label{eq:approx_lim_def}
\lim_{r \to 0} \mean{B_r(x)} |u(y) - z| \, dy = 0\,,
\end{equation}
and we denote by $\widetilde{u}(x)$ the value, $z$, of the approximate limit of $u$ at $x$. In this case, we say that $x$ is a Lebesgue point of $u$. For a given measurable set $E \subset \R^n$, we define the measure-theoretic interior of $E$ as
\[
E^1 := \left \{ x \in \R^n : \lim_{r \to 0} \frac{|E \cap B_r(x)|}{|B_r(x)|} = 1 \right \} = \left \{ x \in \R^n : \widetilde{\chi_E}(x) = 1 \right \}\,.
\] 
 
The approximate discontinuity set $S_{u}$ is the set of points where the approximate limit does not exist. We say that $x$ belongs to the set $J_u$ of approximate jump points of $u$ if there exist $a, b \in \R$, $a \neq b$, and $\nu \in \Sph^{n - 1}$ such that
\begin{equation} \label{eq:approx_jump_def}
\lim_{r \to 0} \mean{B^{+}_{r}(x, \nu)} |u(y) - a| \, dy = 0 \quad \lim_{r \to 0} \mean{B^{-}_{r}(x, \nu)} |u(y) - b| \, dy = 0,
\end{equation}
where $B^{\pm}_{r}(x, \nu) := \{ y \in B_{r}(x) : \pm ( y - x) \cdot \nu \ge 0 \}$. The triplet $(a, b, \nu)$ is uniquely determined by \eqref{eq:approx_jump_def} up to a permutation of $(a, b)$ and a change of sign of $\nu$, and we denote it by $(u^{+}(x), u^{-}(x), \nu_{u}(x))$. For the approximate traces $u^{\pm}(x)$ at $x\in J_{u}$, we adopt the convention of having $u^{+}(x) > u^{-}(x)$. Such traces can also be extended to $x \in \Omega \setminus S_{u}$ by setting $u^{+}(x) = u^{-}(x) = \widetilde{u}(x)$.

For application purposes, it is useful to introduce a representative of $u$ which is a convex combination of $u^\pm$; that is, we fix a Borel function $\lambda:\Omega\to [0,1]$ and we define the {\em $\lambda$-representative} of $u$ as
\[
u^\lambda(x) = \lambda(x) u^+(x) + (1-\lambda(x)) u^-(x) \ \text{ for } x \in \Omega \setminus (S_u \setminus J_u).
\]
It is clear that the $\lambda$-representative is generalization of the {\em precise representative}
\begin{equation*}
u^*(x) = \begin{cases} \widetilde{u}(x) & \text{ if } x \in \Omega \setminus S_u, \\
\displaystyle \frac{u^{+}(x) + u^{-}(x)}{2} & \text{ if } x \in J_u, \end{cases}
\end{equation*}
since $u^*(x) = u^\lambda(x)$ if $\lambda(x) = \frac{1}{2}$ for all $x \in J_u$. More generally, it is possible to define $u^{\lambda}$ whenever $\lambda \in \B_{b}(\Omega)$, the space of bounded Borel functions on $\Omega$.
Now, in the case $u \in BV_{\rm loc}(\Omega)$, we know that $\Haus{n - 1}(S_u \setminus J_u) = 0$, thanks to \cite[Theorem 3.78]{AFP}, and therefore $u^\lambda(x)$ is well defined for $\Haus{n - 1}$-a.e. $x \in \Omega$.

We recall the standard decomposition of the distributional gradient of a function $u \in BV(\Omega)$: we have
\begin{equation*}
Du = \nabla u \, \Leb{n} + D^s u = \nabla u \, \Leb{n} + D^c u + D^j u,
\end{equation*}
where $\nabla u \in L^1(\Omega; \R^n)$ is the density of the absolutely continuous part of $Du$, $D^s u$ is the singular part, $D^j u$ is the jump part, that is, $$D^j u = (u^+ - u^-) \nu_u \, \Haus{n-1} \res J_u,$$ and $D^c u$ is the Cantor part.

For $N > 0$, we denote by $T_{N}$ the truncation function defined as
$T_{N}(t) = \max\{-N,\min \{t,N \}\}$. 
Given $u\in BV(\Omega)$, we have $T_{N}(u) \in BV(\Omega)$ for any $N > 0$, hence the $\lambda$-representative of $T_{N}(u)$ is well-defined, and, as shown in \cite[Proposition 2.1]{ComiLeo}, we have 
\begin{equation*}
\lim_{N \to + \infty} T_{N}(u)^{\lambda}(x) = u^{\lambda}(x) \ \text{ for } \Haus{n - 1}\text{-a.e. } x \in \Omega.
\end{equation*}
The following $\lambda$-approximation result represents a fundamental tool in the proofs of the main results of Section \ref{sec:existence}. Its proof can be found in \cite[Section 3]{ComiLeo}.

\begin{theorem} [$\lambda$-approximation] \label{thm:smooth_lambda_approx}
Let $\lambda:\Omega\to [0,1]$ be a given Borel function. Then for every $u\in BV(\Omega)$ there exists a sequence $(u^\lambda_k)_{k \in \N}\subset C^\infty(\Omega) \cap BV(\Omega)$ such that we have the following:
\begin{enumerate}
\item $u^\lambda_k \to u$ in $BV(\Omega)$-strict as $k \to + \infty$,
\item $u^\lambda_k(x) \to u^\lambda(x)$ for $\Haus{n-1}$-a.e. $x \in \Omega$ as $k\to +\infty$,
\item $\displaystyle \lim_{k \to + \infty} \sqrt{1 + |Du^{\lambda}_k|^2}(\Omega) = \sqrt{1 + |Du|^2}(\Omega)$,
\item $\|u^\lambda_k\|_{L^\infty(\Omega)} \le \left( 1 + \frac{1}{k} \right) \|u\|_{L^{\infty}(\Omega)}$ for all $k \in \N$,
\item if $\Omega$ is an open set with bounded Lipschitz boundary, then $\Tr_{\partial \Omega}(u^\lambda_k)(x) = \Tr_{\partial \Omega}(u)(x)$ for $\Haus{n-1}$-a.e. $x \in \partial\Omega$ and all $k \in \N$.
\end{enumerate}
\end{theorem}

Hereafter, we recall the notions of divergence-measure field and of pairing between an essentially bounded function of bounded variation and an essentially bounded divergence-measure field.

\begin{definition} \label{DMdef}
A vector field $F \in L^{\infty}(\Omega; \R^{n})$ is called an {\em essentially bounded divergence-measure field}, and we write $F \in \DM^{\infty}(\Omega)$,
if $\div F\in \Mm(\Omega)$.
A vector field $F \in L^{\infty}_{\rm loc}(\Omega; \R^n)$ is called a {\em locally essentially bounded divergence-measure field}, and we write $F \in \DM^{\infty}_{\rm loc}(\Omega)$, if $F \in \DM^{\infty}(U)$ for any open set $U \subset \subset \Omega$.
\end{definition}

For further details and an up-to-date list of references on divergence-measure fields, we refer the reader to \cite{ComiLeo}. Following the approach of \cite{crasta2019pairings} we give the notion of $\lambda$-pairing.

\begin{definition} \label{BVpairing_lambda_def}
Given a vector field $F \in \DM^{\infty}_{\rm loc}(\Omega)$, a scalar function $u \in BV_{\rm loc}(\Omega)$ and a Borel function $\lambda : \Omega \to [0, 1]$ such that $u^{\lambda} \in L^1_{\rm loc}(\Omega; |\div F|)$, we define the $\lambda$-{\em pairing} between $F$ and $Du$ as the distribution $(F, Du)_\lambda$ given by
\begin{equation} \label{BV_pairing_lambda_def_eq} (F, Du)_\lambda := \div(u F) - u^{\lambda} \div F. \end{equation}
\end{definition}

This definition is meant to extend the classical dot product between the vector field $F$ and the gradient measure $Du$, even beyond the case $\lambda \equiv \frac 12$, which was introduced by Anzellotti in \cite{Anzellotti_1983}, and which we denote by $(F, Du)_*$.
We collect the relevant properties of the $\lambda$-pairing in the following statement, which stems from the synthesis of \cite[Proposition 4.4]{crasta2019pairings} and \cite[Theorems 2.6 and 3.4]{ComiLeo}. In particular, the approximation result is partially based on Theorem \ref{thm:smooth_lambda_approx}.

\begin{theorem}\label{lemma:pairlambda-vs-area}
Let $F\in \DM^{\infty}(\Omega)$. Let $u\in BV(\Omega)$ and $\lambda:\Omega\to [0,1]$ be a Borel function. If $u^\lambda \in L^{1}(\Omega; |\div F|)$, then we have $\div(u F), (F, Du)_{\lambda} \in \Mm(\Omega)$, with
\begin{equation} \label{eq:Leibniz_lambda_gen}
\div(uF) = u^{\lambda} \div F + (F, Du)_{\lambda} \ \text{ on } \Omega,
\end{equation}
and 
\begin{equation} \label{eq:pairing_estimate_lambda}
|(F, Du)_\lambda| \le \|F\|_{L^\infty(\Omega; \R^n)} |Du| \ \text{ on } \Omega.
\end{equation}
In addition, there exists a sequence $(u_j)_{j \in \N} \subset C^{\infty}(\Omega) \cap BV(\Omega) \cap L^\infty(\Omega)$ such that $u_j \to u$ in $BV(\Omega)$-strict and
\begin{equation} \label{eq:pairlambda_weak_conv}
(F \cdot \nabla u_j) \, \Leb{n} \weakto (F, Du)_{\lambda} \ \text{ in } \Mm(\Omega).
\end{equation}
If $u \in BV(\Omega) \cap L^\infty(\Omega)$, then $u F \in \DM^\infty(\Omega)$ and \eqref{eq:pairlambda_weak_conv} holds true for the approximating sequence $(u^\lambda_k)_{k \in \N}$ given by Theorem \ref{thm:smooth_lambda_approx}.

Finally, for all $a \ge \|F\|_{L^{\infty}(\Omega; \R^n)}$ we have
\begin{equation} \label{eq:pairlambda-vs-area}
|(F, Du)_{\lambda}| \le a\sqrt{1+|Du|^{2}} - \sqrt{a^2 -|F|^{2}}\, \Leb{n} \ \text{ on } \Omega.
\end{equation}
\end{theorem}

\medskip

Hereafter we quote a few results concerning interior and exterior normal traces of a divergence-measure vector field and their occurrence in generalized Gauss-Green and integration by parts formulas. For these results and their proofs we refer to \cite[Lemma 3.1]{comi2017locally} and \cite[Theorems 2.9 and 2.11]{ComiLeo}.

\begin{lemma} \label{lem:div_comp_supp_no_trace}
Let $F \in \DM^{\infty}(\Omega)$ be such that ${\rm supp}(F) \Subset \Omega$. Then we have $\div F(\Omega) = 0$.
\end{lemma}

We say that $\Omega$ is {\em weakly regular} if it is an open bounded set with finite perimeter in $\R^n$ such that $\Haus{n - 1}(\partial \Omega) = \Haus{n - 1}(\redb \Omega)$, or, equivalently, $\Haus{n - 1}(\partial \Omega \setminus \redb \Omega) = 0$. 

\begin{theorem} \label{thm:GG_app}
Let $F \in \DM^{\infty}(\Omega)$ and let $E \subseteq \Omega$ be of finite perimeter in $\R^n$. Assume that either $E \Subset \Omega$ or $\Omega$ is weakly regular. Then, there exist the {\em interior and exterior normal traces} of $F$ on $\redb E$; that is, the functions 
$${\rm Tr}^i(F, \redb E), {\rm Tr}^e(F, \redb E) \in L^\infty(\redb E; \Haus{n-1}),$$ 
which satisfy
\begin{equation}
 \begin{cases} \div F(E^1 \cap \Omega)  = - \displaystyle \int_{\redb E} {\rm Tr}^i(F, \redb E) \, d \Haus{n-1} \\
\|{\rm Tr}^i(F, \redb E)\|_{L^\infty(\redb E; \Haus{n-1})}  \le \|F\|_{L^\infty(E; \R^n)},  \end{cases} \label{eq:GG_1} \\
\end{equation}
and
\begin{equation*}
\begin{cases} \div F( (E^1 \cup \redb E) \cap \Omega ) = - \displaystyle \int_{\redb E} {\rm Tr}^e(F, \redb E) \, d \Haus{n-1} \\
\|{\rm Tr}^e(F, \redb E)\|_{L^\infty(\redb E; \Haus{n-1})}  \le \begin{cases} \|F\|_{L^\infty(\Omega \setminus E; \R^n)} & \text{ if } E \subset \subset \Omega \\
\|F\|_{L^\infty(\Omega; \R^n)} & \text{ otherwise} \end{cases}. \end{cases} 
\end{equation*}
In particular, if $\Omega$ is weakly regular, then there exists ${\rm Tr}^i(F, \partial \Omega) \in L^\infty(\partial \Omega; \Haus{n-1})$ satisfying 
\begin{equation}
\div F(\Omega) = - \int_{\partial \Omega} {\rm Tr}^i(F, \partial \Omega) \, d \Haus{n-1} \text{ and } \|{\rm Tr}^i(F, \partial \Omega)\|_{L^\infty(\partial \Omega; \Haus{n-1})} \le \|F\|_{L^\infty(\Omega; \R^n)}. \label{eq:GG_boundary_domain}
\end{equation}
\end{theorem}

\begin{theorem} \label{thm:GG_boundary_domain_u}
Let $\Omega$ be an open bounded set with Lipschitz boundary.
Let $F \in \DM^{\infty}(\Omega)$ and $u \in BV(\Omega)$ be such that $u^\lambda \in L^1(\Omega; |\div F|)$ for some Borel function $\lambda : \Omega \to [0, 1]$. Then we have
\begin{equation} \label{eq:GG_boundary_domain_u}
\int_{\Omega} u^\lambda \, d \div F + (F, Du)_\lambda(\Omega) = \div(uF)(\Omega) = - \int_{\partial \Omega} {\rm Tr}_{\partial \Omega}(u) {\rm Tr}^i(F, \partial \Omega) \, d \Haus{n-1}.
\end{equation}
\end{theorem}

\section{Special classes of PMC measures}\label{sec:admissible_measures}
\label{sec:admissible}
From the weak formulation of the PMCM equation, see Definition \ref{def:PMCM}, we can immediately obtain a necessary condition for the existence of a solution. The following result is a direct consequence of \cite[Lemma 4.1]{ComiLeo}, therefore we quote it without proof. 

\begin{lemma} \label{necessary_cond} Assume that there exists $T \in L^{\infty}(\Omega; \R^n)$ such that \eqref{L_infty_bound_eq} and \eqref{divergence_eq} holds true. Then $\mu$ satisfies the following conditions:
\begin{enumerate}
\item $\mu \in \MH(\Omega)$, 
\item $\max\{|\mu(E^{1})|, |\mu(E^{1} \cup \redb E)|\} \le \Per(E)$, for all sets $E \Subset \Omega$ of finite perimeter in $\R^n$,
\item if in addition $\Omega$ is weakly regular, then $\max\{|\mu(E^{1} \cap \Omega)|, |\mu((E^{1} \cup \redb E) \cap \Omega)|\} \le \Per(E)$ for all sets $E\subset \Omega$ of finite perimeter in $\R^n$.
\end{enumerate}
\end{lemma}
In view of the variational approach to the PMCM equation, and with reference to Lemma \ref{necessary_cond}, we shall introduce some definitions and facts concerning measures in $\Mm(\Omega)$.

\begin{definition}\label{def:muLper}
A measure $\mu\in \Mm(\Omega)$ is said to satisfy the \emph{necessary condition for the PMCM equation} if 
\begin{equation} \label{eq:necess_cond}
|\mu(E^{1} \cap \Omega)| \le \Per(E) \quad \text{ for all measurable sets } \, E\subset \Omega.
\end{equation}
Then, $\mu$ is \emph{non-extremal} if there exists $0<L<1$ such that 
\begin{equation} \label{eq:subcritical_cond}
|\mu(E^{1} \cap \Omega)| \le L\, \Per(E) \quad \text{ for all measurable sets } \, E\subset \Omega.
\end{equation}
\end{definition}

These conditions can be seen as particular cases of the perimeter bound condition introduced in \cite[Section 4]{ComiLeo}.

\begin{definition}\label{def:muLper}
Given $\mu \in \Mm(\Omega)$ and $L > 0$, we say that $\mu$ belongs to $\PB_{L}(\Omega)$ if 
\begin{equation} \label{eq:PBL}
|\mu(E^{1} \cap \Omega)| \le L\, \Per(E)\,,\quad \text{ for all measurable sets } \, E\subset \Omega.
\end{equation}
We also set $\PB(\Omega) := \bigcup_{L > 0} \PB_{L}(\Omega)$ and, if $\mu \in \PB(\Omega)$, we say that $\mu$ satisfies a perimeter bound condition.
\end{definition}

In particular, $\mu$ is non-extremal if and only if $\mu \in \PB_{L}(\Omega)$ for some $L \in (0, 1)$, and therefore we directly deduce from \cite{ComiLeo} some relevant results concerning non-extremal measures.

\begin{remark} \label{necessary_cond_ball} 
If $\mu\in \Mm(\Omega)$ is non-extremal, then $\mu \in \MH(\Omega)$, by \cite[Remark 4.3]{ComiLeo}.
\end{remark}

As pointed out in \cite{ComiLeo}, measures in $\PB(\Omega)$ belong to the dual of the space $BV(\Omega)$. Hence, we recall here some relevant definitions and results concerning measures in $BV(\Omega)^*$, and for a detailed exposition, we refer the reader to \cite[Section 2]{ComiLeo}.


Given $\mu \in \MH(\Omega)$, the functional 
\begin{equation*} 
\T_{\mu}(u) = \int_\Omega u^* \, d \mu \quad \text{ for all } u \in BV(\Omega) \cap L^{\infty}(\Omega)
\end{equation*}
is well-defined and linear, although not necessarily continuous. Then, we introduce the following terminology: 

\begin{minipage}{1\linewidth}
\begin{center} \textit{given $\mu \in \MH(\Omega)$, we say that $\mu \in BV(\Omega)^*$ if there exists $\T \in BV(\Omega)^*$ such that} \end{center}
$$\T(u) = \T_{\mu}(u) \quad \textit{ for all } \ u\in BV(\Omega) \cap L^\infty(\Omega).$$
\end{minipage}

Under the additional assumption that $\Omega$ is an open bounded set with Lipschitz boundary, it was proved in \cite[Theorem 8.2]{Phuc_Torres} (see also \cite[Lemma 4.4 and Corollary 4.11]{ComiLeo}) that a necessary and sufficient condition for the continuity of $\T_{\mu}$ on $BV(\Omega)\cap L^{\infty}(\Omega)$ with respect to the $BV$-norm is that $\mu \in \PB(\Omega)$.
To study the variational problem related to the PMCM equation, we also introduce the action of the measure $\mu$ against the $\lambda$-representative of $u$. Given a pair $(\mu, \lambda) \in \MH(\Omega) \times \B_{b}(\Omega)$, we define 
\begin{equation} \label{def:T_mu_lambda}
\T_{\mu, \lambda}(u) = \int_\Omega u^\lambda \, d \mu \quad \text{ for all } u \in BV(\Omega) \cap L^{\infty}(\Omega).
\end{equation}
It is worth remarking that the functional $\T_{\mu, \lambda}$ can be extended to a continuous functional defined on the whole space $BV(\Omega)$, as soon as $\mu$ is \textit{admissible} in the sense of the following definition.

\begin{definition}\label{def:muadmissible}
We say that $\mu\in \MH(\Omega)$ is \emph{admissible} if $|\mu| \in BV(\Omega)^{*}$.
\end{definition}

To characterize the admissible measures, we state some facts concerning measures in the dual of $BV$ proved in \cite{ComiLeo}.

\begin{lemma}{\rm\cite[Lemma 4.7]{ComiLeo}} \label{lem:2a2b-bis} 
If $\mu \in \MH(\Omega)$ is admissible, then, for any $u\in BV(\Omega)$ and any Borel function $\lambda : \Omega \to [0, 1]$, we have $u^\lambda \in L^{1}(\Omega; |\mu|)$. In particular, there exists a constant $C>0$ such that 
\begin{equation} \label{eq:admissibility_def}
\int_\Omega |u^{\lambda}| \, d|\mu| \le C\, \|u\|_{BV(\Omega)}  \quad \text{ for all } u\in BV(\Omega).
\end{equation}
\end{lemma}

\begin{remark} \label{rem:div_admissible_Leibniz}
Thanks to Lemma \ref{lem:2a2b-bis}, if the measure $\div F$ is admissible in Theorems \ref{lemma:pairlambda-vs-area} and \ref{thm:GG_boundary_domain_u}, then we do not need to require $u^\lambda \in L^{1}(\Omega; |\div F|)$.
\end{remark}

\begin{remark} \label{rem:extremal_vs_admissible}
We notice that, if $\mu \in \MH(\Omega)$ is admissible, then we have also $\mu \in BV(\Omega)^{*}$. Indeed, due to Lemma \ref{lem:2a2b-bis}, we know that $u^* \in L^1(\Omega, |\mu|)$ for all $u \in BV(\Omega)$, and, by exploiting \eqref{eq:admissibility_def} with $\lambda \equiv \frac{1}{2}$, we obtain
\begin{equation*}
\left | \int_{\Omega} u^{*} \, d \mu \right | \le \int_{\Omega} |u^{*}| \, d |\mu| \le C \big\| u \big\|_{BV(\Omega)}.
\end{equation*}
We underline that, in general, the converse is not true, as shown in \cite[Proposition 5.1]{Phuc_Torres} and \cite[Remark 4.12]{ComiLeo}. 
\end{remark}

We collect in the following proposition the relations between the divergence-measure fields, the perimeter bound, and the admissibility condition.

\begin{proposition}{\rm\cite[Proposition 4.10]{ComiLeo}} \label{prop:adm_PB_div}
The following holds true:
\begin{enumerate}
\item If $\Omega$ is weakly regular and $F \in \DM^{\infty}(\Omega)$, then $\div F \in \PB_L(\Omega)$ for $L = \|F\|_{L^\infty(\Omega; \R^n)}$; if in addition $\Omega$ has Lipschitz boundary, then $\div F \in BV(\Omega)^*$.

\item If $\Omega$ is a bounded open set with Lipschitz boundary and $\mu \in \PB(\Omega)$, then there exists $F \in \DM^{\infty}(\Omega)$ such that $\div F = \mu$ on $\Omega$ and we have $\mu \in BV(\Omega)^*$.

\item If $n \ge 2$ and $|\Omega| < + \infty$ and $\mu \in \MH(\Omega)$ is an admissible measure, then $\mu \in \PB(\Omega)$.

\item If $\mu \in \MH(\Omega)$ is an admissible measure, then there exists $F, \overline{F} \in \DM^{\infty}(\Omega)$ such that $\mu = \div F$ and $|\mu| = \div \overline{F}$ on $\Omega$.

\item If $n = 1$ and $\mu \in \Mm(\Omega)$, then $\mu$ is admissible and there exists $f, \overline{f} \in BV(\Omega)$ such that $\mu = D f$ and $|\mu| = D \overline{f}$ on $\Omega$; if in addition $|\R \setminus \Omega| > 0$, then $\Mm(\Omega) = \PB(\Omega)$.
\end{enumerate}
\end{proposition}

\begin{remark}
Points (1) and (2) of Proposition \ref{prop:adm_PB_div} can be rephrased in terms of the non-extremality condition. On the one hand, if $\mu = \div F$ for some $F \in \DM^{\infty}(\Omega)$ with $\|F\|_{L^\infty(\Omega; \R^n)} < 1$, then $\mu$ is non-extremal, by  Proposition \ref{prop:adm_PB_div}(1). On the other hand, if $\Omega$ is a bounded open set with Lipschitz boundary and $\mu \in \Mm(\Omega)$ is non-extremal, then $\mu \in BV(\Omega)^*$ and there exists $F \in \DM^{\infty}(\Omega)$ such that $\div F = \mu$ on $\Omega$, thanks to Proposition \ref{prop:adm_PB_div}(2). It is also relevant to point out that non-extremal measures may be not admissible. Indeed, up to a constant factor, \cite[Remark 4.12]{ComiLeo} gives us an example of a non-extremal measure $\mu$ such that $|\mu| \notin BV^*(\Omega)$.
\end{remark}

In the following lemma, we give a sufficient condition under which non-extremality implies admissibility.
\begin{lemma}{\rm\cite[Lemma 4.16]{ComiLeo}} \label{lem:Schmidt_non_extremality}
Let $\Omega$ be an open bounded set with Lipschitz boundary and let $\mu \in \MH(\Omega)$. Assume that its positive and negative parts, $\mu^+$ and $\mu^-$, are non-extremal. Then $\mu$ is non-extremal and admissible. In particular, if $\mu \ge 0$ is non-extremal, then it is admissible. 
\end{lemma}

An immediate consequence of Lemma \ref{lem:2a2b-bis} is that, for an admissible measure $\mu$, the functional $\T_{\mu, \lambda}$ defined in \eqref{def:T_mu_lambda} can be naturally extended to all functions in $BV(\Omega)$.

\begin{lemma}{\rm\cite[Lemma 4.17]{ComiLeo}} \label{lem:lambda_int_approx}
Let $\mu \in \MH(\Omega)$ be admissible and $\lambda: \Omega \to [0, 1]$ be a Borel function. Then the functional $\T_{\mu, \lambda} : BV(\Omega) \to \R$ given by \eqref{def:T_mu_lambda}
is well-defined.
Then, given $u \in BV(\Omega)$, if we consider the truncation of $u$, $T_N(u)$ for $N > 0$, and its smooth approximation $T_{N}(u)^{\lambda}_{k}$ given by Theorem \ref{thm:smooth_lambda_approx}, we have
\begin{equation} \label{eq:conv_trunc_lambda}
\lim_{N \to + \infty} \int_{\Omega} T_N(u)^{\lambda} \, d\mu = \int_{\Omega} u^{\lambda} \, d \mu
\end{equation}
and
\begin{equation} \label{eq:conv_trunc_smooth_approx}
\lim_{N \to + \infty} \lim_{k \to + \infty} \int_{\Omega} T_N(u)_k^{\lambda} \, d\mu = \int_{\Omega} u^\lambda \, d \mu.
\end{equation}
Finally, for any $u \in BV(\Omega)$ we can find a sequence $(u_j)_{j \in \N} \subset C^\infty(\Omega) \cap BV(\Omega) \cap L^\infty(\Omega)$ such that
\begin{equation} \label{eq:density_conv_lambda}
\lim_{j \to + \infty} \T_{\mu, \lambda}(u_j) = \T_{\mu, \lambda}(u).
\end{equation}
\end{lemma}

Then, one obtains the following sharp estimate of $|\T_{\mu, \lambda}(u)|$ in terms of the total variation of $u$ and its trace on the boundary of $\Omega$.

\begin{proposition}{\rm\cite[Proposition 4.19]{ComiLeo}}\label{prop:trunc-smooth-coarea}
Let $\Omega$ be an open bounded set with Lipschitz boundary. Let $L > 0$ and $\mu \in \PB_L(\Omega)$. Let $u \in BV(\Omega)$ and $\lambda:\Omega\to [0,1]$ be a Borel function. If $u^\lambda \in L^{1}(\Omega; |\mu|)$, then
\begin{equation} \label{eq:mu_u_Du_condition}
\left | \int_{\Omega} u^{\lambda} \, d \mu \right | \le L \left(|Du|(\Omega) + \int_{\de \Omega} |{\rm Tr}_{\partial \Omega}(u)|\, d\Haus{n-1}\right).
\end{equation}
In particular, \eqref{eq:mu_u_Du_condition} holds for all $u \in BV(\Omega)$ as long as $\mu$ is admissible.
\end{proposition}


As a consequence, we see that, if a measure is in $\PB_L(\Omega)$, then it can be represented as the divergence of a field satisfying $|T(x)| \le L$ for $\Leb{n}$-a.e. $x \in \Omega$. In particular, a non-extremal measure can be written as the divergence of a sub-unitary divergence-measure field.

\begin{lemma}{\rm\cite[Lemma 4.21]{ComiLeo}} \label{lem:existence_optimal_T}
Let $\Omega$ be an open bounded set with Lipschitz boundary. Let $L > 0$ and $\mu \in \PB_L(\Omega)$ be an admissible measure. Then there exists $T \in \DM^{\infty}(\Omega)$ such that $\div T = \mu$ on $\Omega$ and $\|T\|_{L^{\infty}(\Omega; \R^n)} \le L$.
\end{lemma}

We end this section by introducing a suitable subclass of admissible measures, which we denote as $\tMm(\Omega)$ and that will be considered for the semicontinuity result of the following section.

\begin{definition}\label{def:tMomega}
Given a measure $\mu \in \MH(\Omega)$, we say that $\mu\in \tMm(\Omega)$ if
\[
\mu = \mu_1 + \mu_2 + \mu_3,
\] 
where $\mu_1, \mu_2, \mu_3$ are admissible measures in $\MH(\Omega)$ such that
\begin{itemize}
\item $\mu_1 = h \Leb{n}$ for some $h \in L^{q}(\Omega)$ and $q>n$,
\item ${\rm supp}(\mu_2) \Subset \Omega$ and $\mu_2 = \div T_0$ for some $T_0 \in \DM^{\infty}(\Omega) \cap C(\Omega \setminus S; \R^n)$, where $S \subset \Omega$ is a compact set such that $\Haus{n-1}(S) = 0$,
\item $|\mu_3|(\Omega \setminus \Gamma) = 0$, for some compact set $\Gamma\subset \Omega$ such that $|\Gamma| = 0$ and ${\rm dist}(S, \Gamma) > 0$.
\end{itemize}
\end{definition}

The following relevant examples of measures in $\tMm(\Omega)$ are provided by elaborating on \cite[Examples 4.14 and 4.15]{ComiLeo}, respectively.

\begin{example} \label{ex:tMomega} \rm
Let $\Omega$ be a bounded open set with Lipschitz boundary. 
Let $\mu\in \Mm(\Omega)$ such that
\[
\mu = h\, \Leb{n} + \gamma\, \Haus{n-1}\restrict \Gamma
\] 
where $h\in L^{q}(\Omega)$ for some $q>n$, $\gamma \in L^{\infty}(\Omega; \Haus{n-1}\restrict \Gamma)$, 
and $\Gamma\subset \Omega$ is a compact set such that there exist $\Lambda, \overline{\rho}>0$ for which
\begin{equation}\label{eq:mudensityestimate}
\Haus{n-1}(\Gamma \cap B_{\rho}(x)) \le \Lambda \rho^{n-1}\qquad \forall\, 0<\rho<\overline{\rho} \text{ and }\forall\, x\in \Gamma\,.
\end{equation}
Then, $\mu_1 = h \Leb{n}$ and $\mu_3 = \gamma\, \Haus{n-1}\restrict \Gamma$ are admissible measures, and therefore $\mu \in \tMm(\Omega)$. For the proof of this assertion, see \cite[Example 4.14]{ComiLeo}.
\end{example}

We notice that the restriction on the summability of the density of $\mu_1$ in Definition \ref{def:tMomega} can be overcome by carefully choosing the measure $\mu_2$, as shown in the following example.

\begin{example} \rm \label{ex:Virginia}
Let $n \ge 2$ and $\mu = h \Leb{n}$, where $$h(x) = \frac{(n-1)}{|x-x_0|} \ \text{ for } x \neq x_0,$$ for some $x_0 \in \R^n$. Clearly, $h \in L^q_{\rm loc}(\R^n)$ only for $q \in [1, n)$, so that this example does not fall in the same category of Example \ref{ex:tMomega}. In addition, as noticed in \cite[Example 4.15]{ComiLeo}, $\mu$ is an admissible measure, since $\mu \ge 0$ and $\mu = \div T$ for $$T(x) = \frac{(x-x_0)}{|x-x_0|} \ \text{ for } x \neq x_0.$$ It is plain to see that $T \in \DM^\infty_{\rm loc}(\R^n) \cap C(\R^n \setminus \{x_0\}; \R^n)$, but this is not enough to ensure that $\mu \in \tMm(\Omega)$ for any open bounded set $\Omega$ with Lipschitz boundary and $x_{0}\in \Omega$, given that $\mu$ does not have compact support. To prove that $\mu \in \tMm(\Omega)$, we decompose $\mu$ as a sum $\mu_1 + \mu_2$ in the following way. Given $\delta > 0$ such that $B_{2 \delta}(x_0) \Subset \Omega$, we set
\begin{equation*}
\mu = \mu_1 + \mu_2, \ \text{ with } \mu_1 = \left ( h (1 - \eta_\delta)  - T \cdot \nabla \eta_\delta \right ) \Leb{n} \ \text{ and } \ \mu_2 = \div(\eta_\delta T),
\end{equation*}
where 
\begin{equation*}
\eta_\delta(x) = \begin{cases} 1 & \text{ if } |x - x_0| \le \delta, \\
2 - \frac{|x - x_0|}{\delta} & \text{ if } \delta < |x - x_0| < 2 \delta, \\
0 & \text{ if } |x - x_0| \ge 2 \delta. \end{cases}
\end{equation*}
It is clear that $\eta_\delta \in \Lip_c(\R^n)$, ${\rm supp}(\eta_\delta) \subseteq \overline{B_{2 \delta}(x_0)}$, $0 \le \eta_\delta \le 1$ and $\eta_\delta \equiv 1$ on $B_\delta(x_0)$. Hence, $\left ( h (1 - \eta_\delta)  - T \cdot \nabla \eta_\delta \right ) \in L^\infty(\Omega)$, so that, arguing as in Example \ref{ex:tMomega}, we see that $\mu_1$ is an admissible measure. As for $\mu_2$, it is easy to check that $S = \{ x_0 \}$, and
$$\div(\eta_\delta T) = ( h \eta_\delta + T  \cdot \nabla \eta_\delta) \Leb{n},$$
so that $\div(\eta_\delta T) = h \Leb{n} = \mu$ on $B_\delta(x_0)$ and $( h \eta_\delta + T  \cdot \nabla \eta_\delta) \in L^\infty(\Omega \setminus B_\delta(x_0))$. Since $\mu \in BV(\Omega)^*$ and $\mu \ge 0$, for all $u \in BV(\Omega)$ we get
\begin{align*}
\left | \int_{\Omega} u \, d |\div(\eta_\delta T)| \right | & \le \left | \int_{\Omega} u \chi_{B_\delta(x_0)} \, d \mu \right | + \left | \int_{\Omega} u \chi_{\Omega \setminus B_\delta(x_0)} \left | h \eta_\delta + T  \cdot \nabla \eta_\delta \right | \, dx \right | \\
& \le C \|u \chi_{B_\delta(x_0)}\|_{BV(\Omega)} + \|h \eta_\delta + T  \cdot \nabla \eta_\delta\|_{L^\infty(\Omega \setminus B_\delta(x_0))} \|u \chi_{\Omega \setminus B_\delta(x_0)}\|_{L^1(\Omega)} \\
& \le C_1 \|u\|_{L^1(\Omega)} + C |Du|(\overline{B_\delta(x_0)}) + C \int_{\partial B_\delta(x_0)} |{\rm Tr}_{\partial B_\delta(x_0)}(u)| \, d \Haus{n-1} \\
& \le C_2 \|u\|_{BV(\Omega)}, 
\end{align*} 
for some constants $C, C_1, C_2 > 0$, thanks to the Leibniz rule for $BV$ functions and the continuity of the trace operator.
\end{example}

\begin{remark}
We stress the fact that, even if $\Omega$ is a bounded open set with Lipschitz boundary, it is necessary to require $\mu_2$ to be admissible in Definition \ref{def:tMomega}. Indeed, the existence of a vector field $T_0 \in \DM^{\infty}(\Omega) \cap C(\Omega \setminus S; \R^n)$, for some compact set $S \subset \Omega$ with $\Haus{n-1}(S) = 0$, satisfying $\mu_2 = \div T_0$ implies only that $\mu_2 \in BV(\Omega)^*$ (see Remark \ref{rem:extremal_vs_admissible}), while a priori does not give any information on $|\mu_2|$. Actually, the example given in \cite[Proposition 5.1]{Phuc_Torres} of a measure in $BV(\R^n)^*$ whose total variation does not belong to $BV(\R^n)^*$ is exactly the divergence-measure of a vector field $T \in \DM^\infty(\R^n) \cap C(\R^n \setminus \{0\}; \R^n)$.

However, we can drop the admissibility assumption on $\mu_2$ if we ask it to be nonnegative. Therefore, we recover the assumptions on the measure $\mu$ imposed in \cite{ziemer1995nonhomogeneous} as a particular case of the definition of $\mu_2$: indeed, if $\mu \ge 0$ and there exists $C > 0$ and $\eps \in (0, 1]$ such that $\mu(B_r(x)) \le C r^{n-1+\eps}$ for all $B_r(x) \subset \Omega$, then \cite[Theorem 4.5]{phuc2008characterizations} implies that there exists $T \in \DM^{\infty}(\Omega) \cap C(\Omega; \R^n)$ such that $\div T = \mu$.
\end{remark}

\section{Minimizing the capillarity-type functional $\Jj_{\mu}$} 
\label{sec:existence}
Taking into account Proposition \ref{prop:trunc-smooth-coarea}, we shall assume from this section onward that $\Omega$ is a bounded open set with Lipschitz boundary\footnote{In the one-dimensional case, this translates into assuming $\Omega$ to be a finite union of open bounded intervals, see Section \ref{sec:n_1}.}, and that $\mu$ is admissible in the sense of Definition \ref{def:muadmissible}, and satisfies the non-extremality condition \eqref{eq:subcritical_cond}, unless otherwise stated.  

We fix a ball $B$ such that $\Omega \Subset B$, choose a function $\phi\in W^{1,1}(B)$, and set
\[
\cS(\phi) = \{u\in BV(B):\ u=\phi \text{ on }B\setminus \Omega\}\,.
\]
Then we define the functional $\Jj_{\mu}:BV(B) \times \B_{b}(\Omega) \to [-\infty,+\infty]$ as
\begin{equation}\label{eq:Jmu}
\Jj_{\mu}[u,\lambda] = \begin{cases}
\sqrt{1+|Du|^2}(B) + \ds\int_\Omega u^{\lambda}\, d\mu & \text{if }u\in \cS(\phi)\,,\\
+\infty & \text{otherwise.}
\end{cases}
\end{equation}
We notice that $\Jj_{\mu}$ is not identically $+\infty$. To see this, observe that the function $z_{\phi}\in BV(B)$ defined by $z_{\phi}=\phi$ on $B\setminus \Omega$ and $z_{\phi}=0$ on $\Omega$ satisfies $z_{\phi}\in \cS(\phi)$ and
\[
\Jj_{\mu}[z_{\phi}, \lambda] = \sqrt{1+|D\phi|^{2}}(B\setminus \overline{\Omega}) + |\Omega| + \int_{\de\Omega} |{\rm Tr}_{\partial \Omega}(\phi)|\, d\Haus{n-1} < +\infty
\]
for all $\lambda \in \B_{b}(\Omega)$.

The first relevant result concerning this functional is the density-in-energy of smooth functions, stated in the next lemma.

\begin{lemma} \label{lem:density_energy}
Let $\mu \in \MH(\Omega)$ be an admissible measure, $\lambda : \Omega \to [0,1]$ be a Borel function and $u \in \cS(\phi)$. Then there exists a sequence $(u_j)_{j \in \N} \subset \cS(\phi) \cap C^{\infty}(\Omega)$ such that
\begin{equation} \label{eq:density_energy}
\lim_{j \to + \infty}  \Jj_{\mu}[u_{j}, \lambda] = \Jj_{\mu}[u, \lambda].
\end{equation}
\end{lemma}

\begin{proof}
Arguing as in the proof of \cite[Lemma 4.17]{ComiLeo}, we exploit Theorem \ref{thm:smooth_lambda_approx} to construct on $\Omega$ the sequence $(T_{N}(u)^\lambda_k)_{k, N \in \N} \subset BV(\Omega) \cap C^{\infty}(\Omega) \cap L^\infty(\Omega)$. 
Then we define 
\[
u_{k, N} = T_{N}(u)^\lambda_k \, \chi_\Omega  + \phi \, \chi_{B \setminus \Omega},
\]
and we point out that $u_{k, N} \in \cS(\phi) \cap C^{\infty}(\Omega)$.
Since $u_{k, N} = T_{N}(u)^\lambda_k$ on $\Omega$, then \eqref{eq:conv_trunc_smooth_approx} implies that
\begin{equation*}
\lim_{N \to + \infty} \lim_{k \to + \infty} \int_{\Omega} u_{k,N} \, d\mu = \int_{\Omega} u^\lambda \, d\mu.
\end{equation*}
Then, we notice that
\begin{align*}
\sqrt{1 + |D u_{k,N}|^2} & = \sqrt{1 + |\nabla T_{N}(u)^\lambda_k |^2} \Leb{n} \restrict \Omega + |\Tr_{\partial \Omega} (T_N(u)^\lambda_k) - \Tr_{\partial \Omega} (\phi)| \, \Haus{n-1} \restrict \partial \Omega \\
& \quad + \sqrt{1+ |\nabla \phi|^2} \Leb{n} \restrict B \setminus \overline{\Omega}
\end{align*}
Theorem \ref{thm:smooth_lambda_approx} then implies that
\[
\lim_{k \to + \infty} \int_{\Omega} \sqrt{1+|\nabla  T_{N}(u)^\lambda_k|^{2}}\, dx = \sqrt{1+|DT_N(u)|^{2}}(\Omega)
\]
and 
\begin{equation*}
|\Tr_{\partial \Omega} (T_N(u)^\lambda_k) - \Tr_{\partial \Omega} (\phi)| \, \Haus{n-1} \restrict \partial \Omega = |\Tr_{\partial \Omega} (T_N(u)) - \Tr_{\partial \Omega} (\phi)| \, \Haus{n-1} \restrict \partial \Omega.
\end{equation*}
Finally, the strict convergence of $T_N(u)$ to $u$ as $N\to + \infty$ allows us to conclude that
\begin{align*}
\lim_{N \to + \infty} \lim_{k \to + \infty} \sqrt{1 + |D u_{k,N}|^2}(B) & = \sqrt{1+|D u|^{2}}(\Omega) + \int_{\partial \Omega} |\Tr_{\partial \Omega} (u) - \Tr_{\partial \Omega} (\phi)| \, d \Haus{n-1} \\
& \quad + \int_{B \setminus \Omega} \sqrt{1+ |\nabla \phi|^2} \, dx = \sqrt{1+|D u|^{2}}(B).
\end{align*}
Thus, there exists a sequence $(k_j)_{j \in \N} \subset \N$ such that $u_j := T_{j}(u)_{k_j}^\lambda$ belongs to $\cS(\phi) \cap C^\infty(\Omega)$ and satisfies \eqref{eq:density_energy}.
\end{proof}

Since we are interested in minimizing $\Jj_{\mu}[u,\lambda]$, we start by doing such an operation with respect to $\lambda$. We have the following, elementary lower bound.

\begin{lemma} \label{lem:min_J_lambda}
Let $\mu \in \MH(\Omega)$ be an admissible measure and $u \in BV(\Omega)$. Let $\{\Omega_+,\Omega_-\}$ be a Borel partition of $\Omega$ associated with the Hahn decomposition of $\mu$, i.e., such that the positive and negative parts of $\mu$ satisfy $\mu_{+} = \mu \restrict \Omega_+$ and $\mu_{-} = -\mu \restrict \Omega_-$. We set $\lambda_{\mu} = \chi_{\Omega_-}$, for which we have
\begin{equation} \label{eq:lambda_mu_representative}
u^{\lambda_{\mu}} = u^- \chi_{\Omega_+} + u^+ \chi_{\Omega_-}\,.
\end{equation} 
Then, for all Borel functions $\lambda : \Omega \to [0,1]$ we have
\begin{equation*}
\int_{\Omega} u^{\lambda_\mu} \, d \mu\ \le\ \int_{\Omega} u^{\lambda} \, d\mu\,,
\end{equation*}
with equality if and only if $\lambda(x) = \lambda_\mu(x)$ for $|\mu|$-almost every $x\in J_{u}$.
\end{lemma}

\begin{proof}
Thanks to \eqref{eq:lambda_mu_representative}, we have
\begin{align*}
\int_{\Omega} (u^\lambda - u^{\lambda_\mu}) \, d \mu & = \int_{\Omega} \left ( u^+ (\lambda - \chi_{\Omega_{-}}) + u^{-} (1 - \lambda - \chi_{\Omega_{+}} ) \right ) \, d\mu \\
& = \int_{\Omega_+} (u^+ - u^-) \lambda \, d\mu + \int_{\Omega_{-}} (u^+ - u^-) ( \lambda - 1 ) \, d\mu \\
& = \int_{\Omega} (u^+ - u^-) \lambda \chi_{\Omega_{+}} \, d\mu_{+} + \int_{\Omega} (u^+ - u^-) (1 - \lambda ) \chi_{\Omega_{-}} \, d\mu_{-} \\
& = \int_{\Omega} (u^+ - u^-) (\lambda \chi_{\Omega_{+}} + (1 - \lambda) \chi_{\Omega_{-}}) \, d |\mu| \ge 0,
\end{align*}
with equality if and only if
\begin{equation*}
\lambda \chi_{\Omega_{+}} + (1 - \lambda) \chi_{\Omega_{-}} = 0\qquad \text{$|\mu|$-a.e. on }J_{u},
\end{equation*}
which implies $\lambda(x) = \chi_{\Omega_{-}}(x) = \lambda_\mu(x)$ for $|\mu|$-a.e. $x\in J_{u}$.
\end{proof}

The next proposition, which easily follows from Lemmas \ref{lem:density_energy} and \ref{lem:min_J_lambda}, contains an important observation related to the lower-semicontinuous relaxation of the functional $\Jj_{\mu}[u, \lambda]$. 

\begin{proposition} \label{prop:relax_lambda_mu}
Let $\mu \in \MH(\Omega)$ be an admissible measure and $\lambda:\Omega\to [0,1]$ be a Borel function. For any $u \in \cS(\phi)$ there exists a sequence $(u_{j})_{j\in \N} \subset \cS(\phi)\cap C^{\infty}(\Omega)$ such that $u_{j}\to u$ in $L^{1}(\Omega)$ and 
\begin{equation} \label{eq:equality_inf_1}
\lim_{j\to+\infty} \Jj_{\mu}[u_{j}, \lambda] = \Jj_{\mu}[u, \lambda_{\mu}] \le \Jj_{\mu}[u, \lambda],
\end{equation}
with equality if and only if $\lambda(x) = \lambda_{\mu}(x)$ for $|\mu|$-almost every $x\in J_{u}$. Moreover, we have
\begin{equation} \label{eq:equality_inf}
\inf_{u\in \cS(\phi)} \Jj_{\mu}[u, \lambda] = \inf_{u\in \cS(\phi)} \Jj_{\mu}[u, \lambda_{\mu}].
\end{equation}
In particular, $v \in \cS(\phi)$ satisfies $\Jj_{\mu}[v, \lambda] = \inf_{u\in \cS(\phi)} \Jj_{\mu}[u, \lambda]$ if and only if it satisfies $\Jj_{\mu}[v, \lambda_{\mu}] = \inf_{u\in \cS(\phi)} \Jj_{\mu}[u, \lambda_{\mu}]$ and we have $\lambda(x) = \lambda_{\mu}(x)$ for $|\mu|$-almost every $x\in J_{v}$.
\end{proposition}

\begin{proof}
It is enough to apply Lemma \ref{lem:density_energy} with $\lambda_{\mu}$ and observe that the resulting sequence $(u_{j})_{j\in \N}$ verifies the obvious property
\[
\Jj_{\mu}[u_{j}, \lambda] = \Jj_{\mu}[u_{j}, \lambda_{\mu}]\,,
\] 
since $u_{j}$ is continuous for every $j\in \N$ by construction. Then Lemma \ref{lem:min_J_lambda} allows to obtain \eqref{eq:equality_inf_1}. In turns, \eqref{eq:equality_inf} follows from \eqref{eq:equality_inf_1}, since by Lemma \ref{lem:density_energy} we can always choose a smooth minimizing sequence, on which the functionals coincide. Finally, if $v \in \cS(\phi)$ is a minimizer of $\Jj_{\mu}[\cdot, \lambda]$, then it is also a minimizer of $\Jj_{\mu}[\cdot, \lambda_\mu]$ by \eqref{eq:equality_inf}, and we get
$$ \inf_{u\in \cS(\phi)} \Jj_{\mu}[u, \lambda] = \Jj_{\mu}[v, \lambda] \ge \Jj_{\mu}[v, \lambda_{\mu}] = \inf_{u\in \cS(\phi)} \Jj_{\mu}[u, \lambda_{\mu}],$$
thanks to \eqref{eq:equality_inf_1}. Therefore, we obtain $\Jj_{\mu}[v, \lambda] = \Jj_{\mu}[v, \lambda_{\mu}]$, and, by Lemma \ref{lem:min_J_lambda}, this can hold true if and only if $\lambda(x) = \lambda_{\mu}(x)$ for $|\mu|$-almost every $x\in J_{v}$. The converse implication is trivial, since, if $\lambda(x) = \lambda_{\mu}(x)$ for $|\mu|$-almost every $x\in J_{v}$, then $\Jj_{\mu}[v, \lambda] = \Jj_{\mu}[v, \lambda_{\mu}]$ and the functionals admits the same infimum by \eqref{eq:equality_inf}.
\end{proof}

In the next subsections, we shall discuss the lower semicontinuity and the coercivity properties of the functional $\Jj_{\mu}$. In particular, we consider the functional $\Jj_{\mu}[u] = \Jj_{\mu}[u, \lambda_\mu]$ for the lower semicontinuity, since this choice of $\lambda$ is essential in the argument of the proof, and also in the light of Proposition \ref{prop:relax_lambda_mu}. We shall denote this functional simply by $\Jj_{\mu}[u]$ when no ambiguity occurs.

\subsection{Coercivity of $\Jj_{\mu}$}\label{sec:coercivity}
Let us start by proving that $\Jj_{\mu}[\cdot, \lambda]$ is coercive on $\cS(\phi)$ for any Borel function $\lambda : \Omega \to [0, 1]$. Since $\mu$ is admissible and non-extremal, i.e., it satisfies \eqref{eq:subcritical_cond}, by Proposition \ref{prop:trunc-smooth-coarea} we obtain
\[
\left|\int_\Omega u^{\lambda}\, d\mu\right| \le L\, \left(|Du|(\Omega) + \int_{\de \Omega} |{\rm Tr}_{\partial \Omega}(u)|\, d\Haus{n-1}\right)\,,
\]
hence for all $u \in \cS(\phi)$ we get
\begin{align}\nonumber
\Jj_{\mu}[u, \lambda] &\ge |Du|(B) - L\, \left(|Du|(\Omega) + \int_{\de \Omega} |{\rm Tr}_{\partial \Omega}(u)|\, d\Haus{n-1}\right)\\\nonumber
&\ge |Du|(B\setminus \overline{\Omega}) + (1-L)|Du|(\Omega) -L\int_{\de \Omega} |{\rm Tr}_{\partial \Omega}(u)|\, d\Haus{n-1} + \int_{\de \Omega}|{\rm Tr}_{\partial \Omega}(u - \phi)|\, d\Haus{n-1}\\\nonumber
&\ge (1-L)\left(|Du|(\Omega) + \int_{\de \Omega} |{\rm Tr}_{\partial \Omega}(u)|\, d\Haus{n-1}\right) - \int_{\de \Omega}|{\rm Tr}_{\partial \Omega}(\phi)|\, d\Haus{n-1}\\\label{eq:stimaJmu}
&= (1-L)|Du_{0}|(B) - \int_{\de \Omega}|{\rm Tr}_{\partial \Omega}(\phi)|\, d\Haus{n-1}\,,
\end{align}
where 
\[
u_{0}(x) = \begin{cases}
u(x) & \text{if }x\in \Omega\,,\\
0 & \text{if }x\in B\setminus \Omega\,.
\end{cases}
\]
In particular, this result holds for $\lambda = \lambda_\mu$.

\subsection{Lower semicontinuity of $\Jj_{\mu}$}\label{subsec:sci}

In this subsection, we set $\lambda = \lambda_\mu$ and prove the following result.

\begin{theorem}[Semicontinuity]\label{thm:sci}
Let $\mu\in \tMm(\Omega)$ be a non-extremal measure, and let $\mu = \mu_1 + \mu_2 + \mu_3$ be the decomposition given in Definition \ref{def:tMomega}. In addition, assume that $\|T_0\|_{L^\infty(\Omega; \R^n)} \le 1$ and that $\mu_2, \mu_3$ satisfy the non-extremality assumption \eqref{eq:subcritical_cond}. Then the functional $\Jj_{\mu}$ is lower semicontinuous in the topology of $L^{1}(\Omega)$.
\end{theorem}

Roughly speaking, the strategy of the proof consists in dealing separately with the measures $\mu_2$ and $\mu_3$ of the decomposition of $\mu \in \tMm(\Omega)$.

\begin{theorem} \label{thm:liminf_mu_2}
Let $\mu \in \MH(\Omega)$ be an admissible non-extremal measure such that ${\rm supp}(\mu) \Subset \Omega$ and $\mu = \div T$ for some $T \in \DM^{\infty}(\Omega) \cap C(\Omega \setminus S; \R^n)$, where $S \subset \Omega$ is a compact set such that $\Haus{n-1}(S) = 0$. Then for any $u \in BV(\Omega)$ we have $u^{\lambda}(x) = \tilde{u}(x)$ for $|\mu|$-a.e. $x \in \Omega$. If in addition, we assume that $\|T\|_{L^\infty(\Omega; \R^n)} \le 1$, then the functional $\Jj_{\mu}$ is lower semicontinuous in the topology of $L^{1}(\Omega)$. In particular, for every sequence $(u_j)_j \subset BV(\Omega)$ converging to $u$ in $L^1(\Omega)$ we have
\begin{equation} \label{eq:liminf_local_mu_2}
\liminf_{j \to + \infty} \sqrt{1 + |Du_j|^2}(\Omega') + \int_{\Omega} \tilde{u}_j \, d \mu \ge \sqrt{1 + |Du|^2}(\Omega') + \int_{\Omega} \tilde{u} \, d \mu
\end{equation}
for any open set $\Omega' \subset \Omega$ such that $S \Subset \Omega'$.
\end{theorem}

\begin{proof}
We start by noticing that, since $\mu$ is admissible, then we have $\mu \in \MH(\Omega)$, which yields $|\mu|(S) = 0$, while Lemma \ref{lem:2a2b-bis} implies that $u^\lambda \in L^1(\Omega; |\div T|)$ for all $u \in BV(\Omega)$. In addition, we have $\mu = \div T$ on $\Omega$ and $T \in \DM^{\infty}(\Omega) \cap C(\Omega \setminus S; \R^n)$, so that $|\div T|(\Sigma) = 0$ for any countable $\Haus{n-1}$-rectifiable set $\Sigma \subset (\Omega \setminus S)$, thanks to \cite[Proposition 3.4]{ACM} and \cite[Section 2.4]{crasta2017anzellotti}. In particular, this implies that, given $u \in BV(\Omega)$, we have $$|\div T|(J_u \setminus S) = |\div T|(J_u) = 0,$$ so that $u^{\lambda}(x) = \tilde{u}(x)$ for $|\div T|$-a.e. $x \in \Omega$. In addition, it is easy to notice that
\begin{equation*}
(T, Du)_\lambda = T \cdot Du \ \text{ on } \Omega \setminus S,
\end{equation*}
see also \cite[Remark 4.10]{crasta2019pairings}.
Indeed, we just need to exploit \eqref{eq:pairlambda_weak_conv}: for any $\psi \in C_c(\Omega \setminus S)$ we clearly have $\psi T \in C_c(\Omega \setminus S; \R^n)$, so that, given the sequence $(\hat{u}_j)_{j} \subset C^\infty(\Omega) \cap BV(\Omega) \cap L^\infty(\Omega)$ as in Theorem \ref{lemma:pairlambda-vs-area}, we get
\begin{equation*}
\int_{\Omega \setminus S} \psi \, d (T, Du)_\lambda = \lim_{j \to + \infty} \int_{\Omega} (\psi T) \cdot \nabla \hat{u}_j \, dx = \int_{\Omega \setminus S} (\psi T) \cdot d Du,
\end{equation*}
due to the fact that $\hat{u}_j \to u$ in $BV(\Omega)$-strict. Hence, \eqref{eq:Leibniz_lambda_gen} yields
\begin{equation*}
\div(uT) = \tilde{u} \div T + T \cdot Du  \ \text{ on } \Omega \setminus S.
\end{equation*}
Finally, given that $\Haus{n-1}(S) = 0$ implies $|Du|(S) = 0$, we deduce that $T$ is continuous up to a $|Du|$-negligible set, so that the product $T \cdot Du$ is well posed as a Radon measure on $\Omega$, and the pairing measure satisfies
\begin{equation*}
(T, Du)_{\lambda}(A) = \int_{A \setminus S} T \cdot d Du = \int_A T \cdot d Du  \ \text{ for all Borel sets } A \subset \Omega.
\end{equation*}
Hence, with a little abuse of notation, we shall simply write $(T, Du)_{\lambda} = T \cdot Du$. 
Since $|\div T|(S) = 0$ and, by Theorem \ref{lemma:pairlambda-vs-area}, we have
\[
|\div (uT)| \le |\tilde{u}||\div T| + \|T\|_{L^{\infty}(\Omega;\R^{n})} |Du|\ \text{ on } \Omega\,,
\]
we conclude that $|\div (uT)|(S) = 0$. Therefore, we get
\begin{equation} \label{eq:Leibniz_u_tilde_pair_prod}
\div(uT) = \tilde{u} \div T + T \cdot Du  \ \text{ on } \Omega.
\end{equation}

Thanks to the density-in-energy of smooth functions given by Lemma \ref{lem:density_energy}, it is enough to prove the lower semicontinuity of $\Jj_\mu$ on a sequence of functions $u_{j}\in \cS(\phi)\cap C^{\infty}(\Omega)$, such that $u_{j}\to u$ in $L^{1}(\Omega)$ as $j\to+\infty$. 
Due to the coercivity of $\Jj_\mu$ (Section \ref{sec:coercivity}), we may assume without loss of generality that $u\in \cS(\phi)$, that the sequence $(u_{j})_j$ is bounded in $BV(\Omega)$ and that $Du_j \weakto Du$ in $\Mm(\Omega; \R^n)$, up to a subsequence.

Now let $S_t := \left \{ x \in \Omega : {\rm dist}(x, S) < t \right \}$ for $0 < t < {\rm dist}(S, \partial \Omega)$. Hence, for all $\eps > 0$ there exists $\delta > 0$ such that 
\begin{equation} \label{eq:S_t_estimates}
\left | \int_{S_t} T \cdot d Du \right | \le |Du|(S_t) \le \sqrt{1 + |Du|^2}(S_t) < \eps \ \text{ for all } 0 < t < \delta.
\end{equation}
Let $U \Subset \Omega$ be an open set such that ${\rm supp}(\mu) \Subset U$. 
Let now $\eta \in C^{\infty}_c(\Omega)$ be such that $0 \le \eta \le 1$ and $\eta \equiv 1$ on $U$. Then, by the Leibniz rule for smooth scalar functions (see for instance \cite[Theorem 3.1]{CF1}) we have
\begin{equation*}
\div(\eta T) = \eta \div T + T \cdot \nabla \eta \, \Leb{n} = \eta \mu + T \cdot \nabla \eta \, \Leb{n} = \mu + T \cdot \nabla \eta \, \Leb{n},
\end{equation*}
so that $\div(\eta T) = \mu$ on $U$. Hence, for all $v \in BV(\Omega)$ we get
\begin{equation*}
\int_\Omega \tilde{v} \, d \mu = \int_U \tilde{v} \, d \mu =  \int_U \tilde{v} \, d \div(\eta T) = \int_\Omega \tilde{v} \, d \div(\eta T),
\end{equation*}
so that we can assume that $T \in C_c(\Omega; \R^n)$ without loss of generality. Therefore, we have ${\rm supp}(u_jT) \Subset \Omega$, and, by Lemma \ref{lem:div_comp_supp_no_trace} and \eqref{eq:Leibniz_u_tilde_pair_prod}, we get
\begin{equation*}
\int_\Omega u_j \, \div T = - \int_{\Omega} T \cdot \nabla u_j \, dx + \div(u_j T)(\Omega) = - \int_{\Omega} T \cdot \nabla u_j \, dx. 
\end{equation*}
Let now $\psi \in C^{\infty}_c(S_{2t})$ be such that $0 \le \psi \le 1$ and $\psi \equiv 1$ on $S_{3t/2}$ for some $0 < t < \frac{\delta}{3}$. Then, we get
\begin{equation*}
\int_{\Omega} T \cdot \nabla u_j \, dx = \int_{\Omega \setminus \overline{S_t}} (1 - \psi) T  \cdot \nabla u_j \, dx + \int_{S_{2t}}  \psi T \cdot \nabla u_j \, dx.
\end{equation*}
It is clear that $(1 - \psi) T \in C_c(\Omega \setminus \overline{S_t}; \R^n)$ so that 
\begin{equation} \label{eq:conv_lim_S_t}
\lim_{j \to + \infty}  \int_{\Omega \setminus \overline{S_t}} (1 - \psi) T \cdot \nabla u_j \, dx = \int_{\Omega \setminus \overline{S_t}} (1 - \psi) T \cdot d Du =  \int_{\Omega} (1 - \psi) T \cdot d Du.
\end{equation}
On the other hand, we easily obtain
\begin{equation} \label{eq:compensation_on_S_t}
\sqrt{1 + |Du_j|^2}(S_{2t}) - \int_{S_{2t}}  \psi T \cdot \nabla u_j \, dx \ge (1 - \|T\|_{L^\infty(\Omega; \R^n)}) |Du_j|(S_{2t}) \ge 0.
\end{equation}
All in all, we exploit the superadditivity of the liminf, the lower semicontinuity of the area functional, \eqref{eq:S_t_estimates}, \eqref{eq:conv_lim_S_t} and \eqref{eq:compensation_on_S_t} to obtain
\begin{align*}
\liminf_{j \to + \infty} \Jj_{\mu}[u_j] & \ge \liminf_{j \to + \infty} \sqrt{1 + |Du_j|^2}(B \setminus \overline{S_{2t}}) - \lim_{j \to + \infty} \int_{\Omega  \setminus \overline{S_t}} (1 - \psi) T  \cdot \nabla u_j \, dx \\
& \quad + \liminf_{j \to + \infty} \left ( \sqrt{1 + |Du_j|^2}(S_{2t}) - \int_{S_{2t}}  \psi T \cdot \nabla u_j \, dx \right )\\
& \ge \sqrt{1 + |Du|^2}(B \setminus \overline{S_{2t}}) - \int_{\Omega} (1 - \psi) T \cdot d Du \\
& = \sqrt{1 + |Du|^2}(B) - \int_{\Omega} T \cdot d Du - \sqrt{1 + |Du|^2}(\overline{S_{2t}}) +  \int_{S_{2t}} \psi T \cdot d Du \\
& \ge \sqrt{1 + |Du|^2}(B) - \int_{\Omega} T \cdot d Du - 2 \eps \\
& =  \sqrt{1 + |Du|^2}(B) + \int_{\Omega} \tilde{u} \, d \div T - \div(uT)(\Omega) - 2 \eps = \Jj_{\mu}[u] - 2 \eps,
\end{align*}
where the last equality follows from \eqref{eq:Leibniz_u_tilde_pair_prod}, the fact that ${\rm supp}(uT) \Subset \Omega$ and Lemma \ref{lem:div_comp_supp_no_trace}.
Thus, this proves the lower semicontinuity of $\Jj_\mu$, since $\eps$ is arbitrary. Finally, it is easy to notice that the argument above also holds when we restrict the area functional on some open set $\Omega'$ such that $S \Subset \Omega' \subset \Omega$ since it is only important that $S_{2t} \Subset \Omega'$ for some $t > 0$. Therefore, the argument above allows us to prove also \eqref{eq:liminf_local_mu_2}.
\end{proof}

Now, we deal with the last part of the measure $\mu \in \tMm(\Omega)$; that is, the one concentrated on a Lebesgue-negligible set $\Gamma$. In the next two lemmas, we prove some crucial estimates that will allow us to localize. The first is an upper bound of the Lebesgue measure of the sub-levels and the super-levels of any function $v\in \cS(\phi)$ in terms of the function $\phi$, the constant $L$, the energy value $\Jj_{\mu}[v]$, and the truncation parameter $M$.

\begin{lemma}\label{lem:apriori1}
Let $\mu \in \MH(\Omega)$ be non-extremal and admissible, and let $v\in \cS(\phi)$. Then for every $M>0$ we have
\begin{equation*}
\Big|\{x\in \Omega:\ |v(x)|>M\}\Big| \le \frac{1}{C_B(1-L)M} \left(\Jj_{\mu}[v] + \int_{\de \Omega} |{\rm Tr}_{\partial \Omega}(\phi)|\, d\Haus{n-1}\right)\,,
\end{equation*}
where $L\in (0,1)$ is the constant appearing in the definition of non-extremality, and $C_{B}$ denotes the Poincar\'e constant of $B$.
\end{lemma}
\begin{proof}
Let $v_{0}$ be the zero-extension of $v$ outside $\Omega$. By \eqref{eq:stimaJmu} we have
\[
\Jj_{\mu}[v] \ge (1-L) |Dv_{0}|(B) - \int_{\de \Omega}|{\rm Tr}_{\partial \Omega}(\phi)|\, d\Haus{n-1}\,,
\]
hence by Poincar\'e's inequality on $B$ combined with Chebichev's inequality we obtain
\begin{align*}
\Jj_{\mu}[v] &\ge C_B(1-L) \int_\Omega |v_{0}| \, dx - \int_{\de \Omega}|{\rm Tr}_{\partial \Omega}(\phi)|\, d\Haus{n-1}\\
&\ge C_B(1-L)M |\{x\in \Omega:\ |v(x)|>M\}| - \int_{\de \Omega}|{\rm Tr}_{\partial \Omega}(\phi)|\, d\Haus{n-1}\,,
\end{align*}
whence the conclusion follows.
\end{proof}

In the next lemma, we consider a local truncation $v_M$ of $v\in \cS(\phi)$ and we bound $\Jj_{\mu}[v_M]$ from above in terms of $\Jj_{\mu}[v]$, up to a suitably estimated error. 

\begin{lemma}\label{lem:apriori2}
Let $\mu \in \MH(\Omega)$ be non-extremal and admissible.
Given an open set $U\Subset \Omega$ with Lipschitz boundary, a function $v\in \cS(\phi) \cap C^\infty(\Omega)$, and $M>0$, we define the local $M$-truncation of $v$ in $U$ as
\[
v_{M}(x) = \begin{cases}
T_{M}(v)(x) & \text{if }x\in U,\\
v(x) & \text{otherwise.}
\end{cases}
\]
Then we have
\begin{align}\nonumber
\Jj_{\mu}[v_{M}]\ &\le\ \Jj_{\mu}[v] + \frac{2L^{2}}{C_B(1+L)(1-L)^{2} M}\left( \Jj_{\mu}[v] + \int_{\de\Omega} |{\rm Tr}_{\partial \Omega}(\phi)|\, d\Haus{n-1} \right)\\\label{eq:apriori3}
&\qquad\qquad\qquad + (1+L) \int_{\de U}(|v| - M)_{+}\, d\Haus{n-1},
\end{align}
where $L\in (0,1)$ is the constant appearing in the definition of non-extremality, and $C_{B}$ denotes the Poincar\'e constant of $B$.
\end{lemma}
\begin{proof}
We see that \eqref{eq:apriori3} follows directly from 
\begin{equation}\label{eq:tronca1}
\Jj_{\mu}[v_{M}] \le \Jj_{\mu}[v] + \frac{2L^{2}}{1-L^{2}} \left |\left \{x \in \Omega : |v(x)|>M \right \}\right | + (1 + L) \int_{\de U}(|v| - M)_{+}\, d\Haus{n-1},
\end{equation}
thanks to Lemma \ref{lem:apriori1}. For the proof of \eqref{eq:tronca1} we observe that 
\begin{align*}
\Jj_{\mu}[v_{M}] &= \sqrt{1+|D v_{M}|^{2}}(B) + \int_\Omega v_{M}\, d\mu\\ 
&= \sqrt{1+|Dv|^{2}}(B\setminus \overline U) + \int_{\de U} (|v| - M)_{+}\, d\Haus{n-1} + \int_{U}\sqrt{1+|\nabla v_{M}|^{2}}\, dx + \int_\Omega v_{M}\, d\mu\\
&= \sqrt{1+|Dv|^{2}}(B\setminus \overline U) + \int_{\de U} (|v| - M)_{+}\, d\Haus{n-1} + \int_{U}\sqrt{1+|\nabla v|^{2}}\, dx \\
&\qquad\qquad - \int_{\{|v|>M\}\cap U} \sqrt{1+|\nabla v|^{2}}\, dx + \Big|\{|v|>M\}\cap U\Big| + \int_\Omega v_{M}\, d\mu\\
&= \Jj_{\mu}[v] + \int_{\de U} (|v| - M)_{+}\, d\Haus{n-1} - \int_{\{|v|>M\}\cap U} \left(\sqrt{1+|\nabla v|^{2}}-1\right)\, dx \\
& \quad - \int_{\{|v|>M\}\cap U} (v - v_M)\, d\mu\\
&= \Jj_{\mu}[v] + \int_{\de U} (|v| - M)_{+}\, d\Haus{n-1} + N\,,
\end{align*}
where we have set
\begin{equation*}
N = - \int_{\{|v|>M\}\cap U} \left(\sqrt{1+|\nabla v|^{2}}-1\right)\, dx - \int_{\{|v|>M\}\cap U} (v - v_M)\, d\mu\,.
\end{equation*}
Next, we observe the following, elementary fact: 
\begin{equation*}
\forall\, a>0,\qquad \sqrt{1+a^{2}} - 1 > La\quad \Leftrightarrow\quad a> \frac{2L}{1-L^{2}}\,.
\end{equation*}
Consequently, we can set $w = v - v_M$ and, after noticing that $|w| = |v| - M$ and $|\nabla w| = |\nabla v|$ almost everywhere on $\{|v|>M\} \cap U$, we obtain
\begin{align*}
N &\le -L\int_{\{|v|>M,\ |\nabla w|> \frac{2L}{1-L^{2}}\}\cap U} |\nabla w| \, dx - \int_{\{|v|>M\}\cap U} w\, d\mu\\
&= -L\int_{\{|v|>M\}\cap U} |\nabla w| \, dx + L \int_{\{|v|>M,\ |\nabla w|\le \frac{2L}{1-L^{2}}\}\cap U}|\nabla w| \, dx - \int_{\{|v|>M\}\cap U} w\, d\mu\\
&\le \frac{2L^{2}}{1-L^{2}}\Big|\{|v|>M\}\cap U\Big| -L\int_{\{|w|>0\}\cap U} |\nabla w| \, dx - \int_{\{|w|>0\}\cap U} w\, d\mu \,.
\end{align*}
At this point we can use the coarea formula on the first integral and the Cavalieri representation of the second integral, obtaining
\begin{align*}
N &\le \frac{2L^{2}}{1-L^{2}}\Big|\{|v|>M\}\cap U\Big| -\int_{0}^{+\infty} \Big(L\, \Per(\{w>t\};U) + \mu(\{w>t\}\cap U)\Big)\, dt\\
&\qquad \qquad -\int_{0}^{+\infty} \Big(L\,\Per(\{w<-t\};U) -\mu(\{w<-t\}\cap U)\Big)\, dt \\
&= \frac{2L^{2}}{1-L^{2}}\Big|\{|v|>M\}\cap U\Big| -\int_{M}^{+\infty} \Big(L\, \Per(\{v>t\}\cap U) + \mu(\{v>t\}\cap U)\Big)\, dt\\
&\qquad \qquad -\int_{M}^{+\infty} \Big(L\,\Per(\{v<-t\}\cap U) -\mu(\{v<-t\}\cap U)\Big)\, dt  + L \int_{\de U}(|v|-M)_{+}\, d\Haus{n-1}\\
&\le \frac{2L^{2}}{1-L^{2}}\Big|\{|v|>M\}\cap U\Big| +L\int_{\de U}(|v|-M)_{+}\, d\Haus{n-1}\,,
\end{align*}
where in the last inequality we also used \eqref{eq:subcritical_cond}, so that \eqref{eq:apriori3} follows.
\end{proof}

Now we can deal with the lower semicontinuity of $\Jj_\mu$, under the assumption that $\mu$ is a non-extremal admissible measure concentrated on a Lebesgue-negligible set.

\begin{theorem} \label{thm:liminf_mu_3}
Let $\mu \in \MH(\Omega)$ be an admissible measure such that $|\mu|(\Omega \setminus \Gamma) = 0$ for some compact set $\Gamma \Subset \Omega$ with $|\Gamma| =0$. Assume in addition that $\mu$ satisfies the non-extremality assumption \eqref{eq:subcritical_cond}. Then the functional $\Jj_{\mu}$ is lower semicontinuous in the topology of $L^{1}(\Omega)$. In particular, given $u \in BV(\Omega)$, for every sequence $(u_j)_j$ bounded in $BV(\Omega)$ and converging to $u$ in $L^1(\Omega)$, we have
\begin{equation} \label{eq:liminf_local_mu_3}
\liminf_{j \to + \infty} \sqrt{1 + |Du_j|^2}(\Omega') + \int_{\Omega} u^\lambda_j \, d \mu \ge \sqrt{1 + |Du|^2}(\Omega') + \int_{\Omega} u^\lambda \, d \mu
\end{equation}
for any open set $\Omega' \subset \Omega$ such that $\Gamma \Subset \Omega'$.
\end{theorem} 

\begin{proof}
First, we note that, due to the coercivity of $\Jj_\mu$ (Section \ref{sec:coercivity}), we need to check the lower semicontinuity only on $\cS(\phi)$. Then, thanks to Lemma \ref{lem:density_energy}, any $v\in \cS(\phi)$ can be approximated by a sequence of functions $(v_{h})_{h}\subset \cS(\phi)\cap C^{\infty}(\Omega)$ in such a way that $\Jj_{\mu}[v] = \lim_{h} \Jj_{\mu}[v_{h}]$. Therefore, in order to check the lower semicontinuity of $\Jj_{\mu}$ we can fix a function $u\in \cS(\phi)$ and a sequence of functions $v_{h}\in \cS(\phi)\cap C^{\infty}(\Omega)$, such that $v_{h}\to u$ in $L^{1}(\Omega)$ as $h\to+\infty$. Then, arguing by contradiction, we assume there exists $\rho \in (0, 1)$ such that $\Jj_{\mu}[v_{h}] \le \Jj_{\mu}[u] - K\rho$ for all $h$, and for a positive constant $K$ that will be chosen at the end of the argument. In particular, it will be crucial to apply \eqref{eq:apriori3} on a suitably chosen neighbourhood $U$ of $\Gamma$ such that
\begin{equation} \label{eq:de_U_good_prop}
\Haus{n-1}(\de U \cap J_u) = 0 \ \text{ and } \ v_{h} \to \widetilde u \text{ in } L^{1}(\de U; \Haus{n-1}). 
\end{equation}
To find $U$, we recall that $v_{h}$ converges to $u$ in $L^{1}(\Omega)$ and we use the coarea formula with the distance function from $\Gamma$. Therefore, for $h$ large enough we can set 
\[
v_{h,M}(x) = \begin{cases}
T_{M}(v_{h})(x) & \text{if }x\in U,\\
v_h(x) & \text{otherwise,}
\end{cases}
\]
and deduce from \eqref{eq:apriori3} that
\begin{align*}
\Jj_{\mu}[v_{h,M}] &\le \Jj_{\mu}[v_{h}] + \frac{C}{M} + 2\int_{\de U}(|\widetilde{u}| - M)_{+}\, d\Haus{n-1}\,,
\end{align*}
for all $h\ge h_{0}$, and for some $h_{0}$ and $C$ depending only on the choice of $u, U$ and $L \in (0,1)$. 
Then, we note that by choosing $M$ large enough and only depending on $u$ and $U$ (hence, uniformly with respect to $h$) the term 
\[
\frac{C}{M} + 2\int_{\de U}(|\widetilde{u}| - M)_{+}\, d\Haus{n-1}
\]
can be made arbitrarily small. In conclusion, for every $\rho>0$ there exist $M>0$ and $h_{0}$, such that 
\[
\Jj_{\mu}[v_{h,M}] \le \Jj_{\mu}[v_{h}] + \rho\qquad \forall\, h\ge h_{0}\,.
\] 
Thus, we may directly take $v_{h}\in W^{1,1}(\Omega\setminus \de U) \cap BV(\Omega)$, such that $|v_{h}|\le M$ on $U\supset \Gamma$, and $\Jj_{\mu}[v_{h}] \le \Jj_{\mu}[u] - (K-1)\rho$ for all $h$. At the same time, we can analogously define the local $M$-truncation of the function $u$ on $U$, denoted as $u_{M}$, so that by the continuity of the area functional with respect to convergence in $BV$-norm of the $M$-truncations and \eqref{eq:conv_trunc_lambda}, we can find $M$ large enough with the property $\Jj_{\mu}[u_{M}]\ge \Jj_{\mu}[u] - \rho$. All in all, this implies that we can take from the very beginning $|u|, |v_h|\le M$ on $U$ and $\Jj_{\mu}[v_{h}] \le \Jj_{\mu}[u] - (K-2)\rho$ for all $h$. 

Since 
\[
\sqrt{1+|Du|^{2}}(U) = \int_{U}\sqrt{1+|\nabla u|^{2}}\, dx + |D^{s}u|(U)
\]
we can choose $U$ with Lebesgue measure so small that 
\[
\sqrt{1+|Du|^{2}}(U) \le |Du|(\Gamma) + \rho\,,
\]
given that $|\Gamma| = 0$.
By \eqref{eq:de_U_good_prop} we have $\sqrt{1+|Du|^{2}}(\de U) = 0$, and since 
\begin{equation} \label{eq:liminf_out_U}
\liminf_{h} \sqrt{1+|Dv_{h}|^{2}}(B\setminus \overline U) \ge \sqrt{1+|Du|^{2}}(B\setminus \overline U)
\end{equation}
by the lower semicontinuity of the area functional, we can assume that 
\[
\sqrt{1+|Dv_{h}|^{2}}(B\setminus U) \ge \sqrt{1+|Du|^{2}}(B\setminus U) - \rho
\]
for $h$ large enough. Then, up to possibly choosing a bigger $h_{0}$, we have for all $h\ge h_{0}$
\begin{align}\nonumber
|Dv_{h}|(U) - |Du|(\Gamma) &+ \int_{\Gamma}(v_{h}-u^\lambda)\, d\mu\ 
\\\nonumber
&\le\ \sqrt{1+|Dv_{h}|^{2}}(U) - \sqrt{1+|Du|^{2}}(U) + \rho + \int_{\Gamma}(v_{h}-u^\lambda)\, d\mu\\\nonumber
&\le\ \sqrt{1+|Dv_{h}|^{2}}(B) - \sqrt{1+|Du|^{2}}(B) + 2\rho + \int_{\Gamma}(v_{h}-u^\lambda)\, d\mu\\\nonumber
&\le\ \Jj_{\mu}[v_{h}] - \Jj_{\mu}[u] + 2\rho\\\label{eq:deltavartotU}
&\le\ -(K-4)\rho\,.
\end{align}

In the next step, we first recall Lusin's Theorem and find a compact set $\Gamma_{1}\subset \Gamma$ such that $u^{\lambda}$ is continuous on $\Gamma_{1}$ and, setting $\Gamma_{2} = \Gamma \setminus \Gamma_{1}$, we have
\begin{equation}\label{eq:estgamma2}
|Du|(\Gamma_{2}) + M|\mu|(\Gamma_{2}) + \int_{\Gamma_{2}} |u^{\lambda}|\, d|\mu| < \rho\,.
\end{equation}
Now we set 
\begin{equation} \label{eq:epsilon_def_rho_Gamma}
\eps = (1+|\mu|(\Gamma))^{-1}\rho.
\end{equation} 
Then, we notice that, by \eqref{eq:mu_pm_differentiation}, for $|\mu|$-a.e. $x \in \Omega^{\mp}$ we have
\begin{equation*}
\mu^{\pm}(B_r(x)) = o(|\mu|(B_r(x)) ) \ \text{ as } r \to 0^+.
\end{equation*}
In addition, by the definition of $u^{\pm}$, for $\Haus{n-1}$-a.e. $x \in \Omega$ we see that 
\begin{align*}
\lim_{r \to 0^+} \frac{|\{u > t \} \cap B_r(x)|}{|B_r(x)|} = 0 & \ \text{ for all } t \ge u^+(x) + \eps, \\
\lim_{r \to 0^+} \frac{|\{u < t \} \cap B_r(x)|}{|B_r(x)|} = 0 & \ \text{ for all } t \le u^-(x) - \eps.
\end{align*}
Therefore, by exploiting the spherical coarea formula, we deduce that there exists a sequence $(r_j)_{j \in \N}$, $r_j \to 0^+$, such that
\begin{align*}
\lim_{j \to + \infty} \frac{\Haus{n-1}(\{u^+ > t \} \cap \partial B_{r_j}(x))}{\Per(B_{r_j}(x))} = 0 & \ \text{ for } \Leb{1}\text{-a.e. } t > u^+(x) + \eps, \\
\lim_{j \to + \infty} \frac{\Haus{n-1}(\{u^- < t \} \cap \partial B_{r_j}(x))}{\Per(B_{r_j}(x))} = 0 & \ \text{ for } \Leb{1}\text{-a.e. } t < u^-(x) - \eps,
\end{align*}
since ${\rm Tr}_{\de B_r(x)}(\chi_{\{u > t\}}) = \chi_{\{u^+ > t \}}$ and ${\rm Tr}_{\de B_r(x)}(\chi_{\{u < t\}}) = \chi_{\{u^- < t \}}$ $\Haus{n-1}$-a.e. on $\de B_r(x)$ and for $\Leb{1}$-a.e. $t$. Moreover, we can additionally assume that 
\begin{align}\label{eq:convgen1}
\text{$v_{h}\to \widetilde{u}$ in $L^{1}(\de B_{r_{j}}(x))$ as $h\to+\infty$, and $\widetilde{u} = u^{+} = u^-$ $\Haus{n-1}$-a.e. on $\de B_{r_{j}}(x)$.}
\end{align}

Hence, exploiting these facts and Lemma \ref{lem:sopraupiueps}, we can apply Besicovitch covering theorem and find a finite family of pairwise disjoint balls $\{B_{i} := B(x_{i},r_{i})\}_{i=1,\dots, m}$ centered on $\Gamma_{1}$, contained in $U$, and covering $\widetilde{\Gamma}_{1}\subset \Gamma_{1}$ such that, if we set $\widetilde{\Gamma}_{2} = \Gamma \setminus \widetilde{\Gamma}_{1}$, the inequality \eqref{eq:estgamma2} is also satisfied with $\widetilde{\Gamma}_{2}$ in place of $\Gamma_{2}$, and the following properties are verified:
\begin{equation}\label{eq:upluscont}
|u^{\lambda}(x) - u^{\lambda}(x_{i})| < \eps\qquad \forall\, x\in \Gamma_{1}\cap B_{i},
\end{equation}
\begin{align}
 \Haus{n-1}( \{ u^+ > t \} \cap \partial B_i)  < \frac{\Per(B_i)}{2}  & \ \text{ for } \Leb{1}\text{-a.e. } t > u^+(x_i) + \eps, \label{eq:trace_min_sup_level_set} \\
 \Haus{n-1}( \{ u^- < t \} \cap \partial B_i) < \frac{\Per(B_i)}{2} & \ \text{ for } \Leb{1}\text{-a.e. } t < u^-(x_i) - \eps, \label{eq:trace_min_sub_level_set}
\end{align}
\begin{equation}\label{eq:cutvarBi}
\int^{u^{-}(x_{i})-\eps}_{- M}\Per(\{u<t\};B_{i})\, dt + \int_{u^{+}(x_{i})+\eps}^{M}\Per(\{u>t\};B_{i})\, dt < \rho \frac{|Du|(B_{i})}{1+|Du|(U)},
\end{equation}
and
\begin{equation} \label{eq:decay_mu_pm_x_i}
\mu_{\pm}(B_i) \le \frac{\eps}{2M + 1} |\mu|(B_i) \ \text{ whenever } x_i \in \Omega^{\mp},
\end{equation}
for $i=1,\dots,m$. For the sake of simplicity, from now on we will omit the tilde in $\widetilde{\Gamma}_{1}$ and $\widetilde{\Gamma}_{2}$. 

Therefore by \eqref{eq:deltavartotU} and \eqref{eq:estgamma2} we infer that
\begin{align}
\sum_{i=1}^{m} \Big(|Dv_{h}|(B_{i}) &- |Du|(B_{i}) + \int_{\Gamma_{1}\cap B_{i}} (v_{h}-u^{\lambda})\, d\mu\Big) \le |Dv_{h}|(U) - |Du|(\Gamma_{1}) + \int_{\Gamma_{1}} (v_{h}-u^{\lambda})\, d\mu \nonumber \\
&= |Dv_{h}|(U) - |Du|(\Gamma) + \int_{\Gamma} (v_{h}-u^{\lambda})\, d\mu + |Du|(\Gamma_{2}) -\int_{\Gamma_{2}} (v_{h}-u^{\lambda})\, d\mu \nonumber \\
&\le -(K-4)\rho + |Du|(\Gamma_{2}) + M|\mu|(\Gamma_{2}) + \int_{\Gamma_{2}}|u^{\lambda}|\, d|\mu|\nonumber\\
&\le -(K-5)\rho\,. \label{eq:K-7_rho}
\end{align}
Now we set
 \begin{equation*}
 \widehat{v}_{h}(x) = \begin{cases}
\max\Big(\min(u^{+}(x_{i})+\eps,v_h(x)),u^{-}(x_{i})-\eps\Big) & \text{if }x\in B_i \text{ for some } i,\\
v_h(x) & \text{otherwise\,,}
\end{cases}
\end{equation*}
and
 $$\widehat{u}(x) = \begin{cases}
\max\Big(\min(u^{+}(x_{i})+\eps,u(x)),u^{-}(x_{i})-\eps\Big) & \text{if }x\in B_i \text{ for some } i,\\
u(x) & \text{otherwise.}
\end{cases}$$ 
One can easily check that $\widehat{v}_{h} \to \widehat{u}$ in $L^1(\Omega)$, hence by the lower semicontinuity of the total variation, and by choosing $h$ large enough, we get
\begin{equation}\label{eq:DhatvDu}
\sum_{i=1}^{m} |D\widehat{v}_{h}|(B_{i}) \ge \sum_{i=1}^{m} |D\widehat{u}|(B_{i}) - \rho\,.
\end{equation}
On the other hand, by coarea and \eqref{eq:cutvarBi} we get
\begin{equation}\label{eq:DuDhatu}
\sum_{i=1}^{m} |Du|(B_{i}) \le \sum_{i=1}^{m} |D\widehat{u}|(B_{i}) +  \frac{\rho}{1+|Du|(U)}\sum_{i=1}^{m} |Du|(B_{i}) \le \sum_{i=1}^{m} |D\widehat{u}|(B_{i}) + \rho\,.
\end{equation}
Therefore, by combining \eqref{eq:DhatvDu} with \eqref{eq:DuDhatu}, for $h$ large enough we obtain
\begin{equation}\label{eq:Dvhat1}
\sum_{i=1}^{m} |D\widehat{v}_{h}|(B_{i}) \ge \sum_{i=1}^{m} |Du|(B_{i}) - 2\rho\,.
\end{equation}
On the other hand, again thanks to the coarea formula, we have 
\begin{align}\nonumber
\sum_{i=1}^{m}|Dv_{h}|(B_{i}) &= \sum_{i=1}^{m}\left(|D\widehat{v}_{h}|(B_{i}) + \int_{u^{+}(x_{i})+\eps}^{M} \Per(\{v_{h}>t\};B_{i})\, dt + \int_{-M}^{u^{-}(x_{i})-\eps} \Per(\{v_{h} < t\};B_{i})\, dt\right)\\\nonumber
&= \sum_{i=1}^{m}\left(|D\widehat{v}_{h}|(B_{i}) + \int_{u^{+}(x_{i})+\eps}^{M} \Per(\{v_{h}>t\}\cap B_{i})\, dt + \int_{-M}^{u^{-}(x_{i})-\eps} \Per(\{v_{h} < t\}\cap B_{i})\, dt\right. \\\nonumber
& \quad -\left. \int_{u^{+}(x_{i})+\eps}^{M} \Haus{n-1}(\{v_{h}>t\}\cap \de B_{i})\, dt - \int_{-M}^{u^{-}(x_{i})-\eps} \Haus{n-1}(\{v_{h} <t\}\cap \de B_{i})\, dt\right) \\\nonumber
&= \sum_{i=1}^{m}\Bigg(|D\widehat{v}_{h}|(B_{i}) + \int_{u^{+}(x_{i})+\eps}^{M} \Per(\{v_{h}>t\}\cap B_{i})\, dt + \int_{-M}^{u^{-}(x_{i})-\eps} \Per(\{v_{h}<t\}\cap B_{i})\, dt \\\label{eq:stimona1}
& \quad - \int_{\de B_{i}} \big(v_{h}-u^{+}(x_{i})-\eps\big)_{+}\, d\Haus{n-1} - \int_{\de B_{i}} \big(u^{-}(x_{i}) - \eps - v_h \big)_{+}\, d\Haus{n-1}\Bigg). 
\end{align}
Then, we notice that, combining the Poincar\'e-trace inequality \eqref{eq:poincaretrace} with \eqref{eq:trace_min_sup_level_set} and \eqref{eq:trace_min_sub_level_set}, we obtain
\begin{equation*}
\Haus{n-1}(\{u^{+}>t\}\cap \de B_{i}) \le C_{PT} \Per(\{u>t\};B_{i}) \ \text{ for } \Leb{1}\text{-a.e. } t > u^+(x_i) + \eps
\end{equation*}
and
\begin{equation*}
\Haus{n-1}(\{u^{-}<t\}\cap \de B_{i}) \le C_{PT} \Per(\{u<t\};B_{i}) \ \text{ for } \Leb{1}\text{-a.e. } t < u^-(x_i) - \eps.
\end{equation*}
Thanks to these last two inequalities, \eqref{eq:convgen1}, and \eqref{eq:cutvarBi}, for $h$ large enough from \eqref{eq:stimona1} we get the following estimate:
\begin{align*}
\sum_{i=1}^{m}|Dv_{h}|&(B_{i}) 
 -\sum_{i=1}^{m}\left(|D\widehat{v}_{h}|(B_{i}) + \int_{u^{+}(x_{i})+\eps}^{M} \Per(\{v_{h}>t\}\cap B_{i})\, dt + \int_{-M}^{u^{-}(x_{i})-\eps} \Per(\{v_{h}<t\}\cap B_{i})\, dt\right) \\
&\ge - \sum_{i=1}^{m}\int_{\de B_{i}} \big(u^{+}-u^{+}(x_{i})-\eps\big)_{+}\, d\Haus{n-1} - \rho -\sum_{i = 1}^m \int_{\de B_{i}} \big(u^{-}(x_{i})-\eps - u^{-} \big)_{+}\, d\Haus{n-1} - \rho \\
&= - \sum_{i=1}^{m}\left(\int_{u^{+}(x_{i})+\eps}^{M} \Haus{n-1}(\{u^{+}>t\}\cap \de B_{i})\, dt + \int_{-M}^{u^{-}(x_{i})-\eps} \Haus{n-1}(\{u^{-} < t\}\cap \de B_{i})\, dt\right) - 2\rho \\
&\ge - C_{PT}\sum_{i=1}^{m}\left(\int_{u^{+}(x_{i})+\eps}^{M} \Per(\{u>t\};B_{i}) \, dt + \int_{-M}^{u^{-}(x_{i})-\eps} \Per(\{u<t\};B_{i})\, dt\right) - 2\rho \\
&\ge - 2(C_{PT}+1)\rho\,.
\end{align*}
We can thus combine the last chain of inequalities with \eqref{eq:Dvhat1} and get
\begin{align}\label{eq:stimadiffvar}
\sum_{i=1}^{m}\Big(|Dv_{h}|(B_{i}) - |Du|(B_{i})\Big) & \ge \sum_{i=1}^{m} \left(\int_{u^{+}(x_{i})+\eps}^{M} \Per(\{v_{h}>t\}\cap B_{i})\, dt + \int_{-M}^{u^{-}(x_{i})-\eps} \Per(\{v_{h}<t\}\cap B_{i})\, dt\right) \\
& \quad - 2(C_{PT}+2)\rho \nonumber
\end{align}
We pass now to the integral with respect to $\mu$. Since $\lambda = \lambda_\mu$, we see that $u^\lambda(x_i) = u^{\pm}(x_i)$ whenever $x_i \in \Omega^\mp$, respectively, see \eqref{eq:lambda_mu_representative}. Hence, we consider separately these two cases. If $x_i \in \Omega^-$, then, using \eqref{eq:upluscont}, \eqref{eq:decay_mu_pm_x_i} and the non-extremality of $\mu$, we get
\begin{align*}
\int_{\Gamma_{1}\cap B_{i}} (v_{h} - u^{\lambda})\, d\mu & = \int_{\Gamma_{1}\cap B_{i}} (v_{h} - u^{+}(x_{i}) -  \eps )\, d\mu + \eps \mu(\Gamma_{1} \cap B_i) - \int_{\Gamma_1 \cap B_i} (u^\lambda - u^+(x_i)) \, d \mu \\
& \ge \int_{\Gamma_{1}\cap B_{i}} (v_{h} - u^{+}(x_{i}) -  \eps ) \, d\mu_{+} - \int_{\Gamma_{1}\cap B_{i}} (v_{h} - u^{+}(x_{i}) -  \eps ) \, d\mu_{-} - 2 \eps |\mu|(B_i) \\
& \ge - (2M + \eps) \mu_+(B_i) - \int_{\Gamma_{1}\cap B_{i}} (v_{h} - u^{+}(x_{i}) -  \eps )_+ \, d\mu_{-} - 2 \eps |\mu|(B_i) \\
& \ge - \int_{\Gamma_{1}\cap B_{i}} (v_{h} - u^{+}(x_{i}) -  \eps )_+ \, d\mu_{-} - 3 \eps |\mu|(B_i) \\
& \ge \int_{\Gamma_{1}\cap B_{i}} (v_{h} - u^{+}(x_{i}) -  \eps )_+ \, d\mu - \int_{\Gamma_{1}\cap B_{i}} (v_{h} - u^{+}(x_{i}) -  \eps )_+ \, d\mu_{+} - 3 \eps |\mu|(B_i) \\
& \ge \int_{\Gamma_{1}\cap B_{i}} (v_{h} - u^{+}(x_{i}) -  \eps )_+ \, d\mu - 4 \eps |\mu|(B_i) \\
& = \int_{u^{+}(x_{i}) + \eps}^{M} \mu(\{v_{h}>t\}\cap B_{i})\, dt - 4 \eps |\mu|(B_i) \\
& \ge - \int_{u^{+}(x_{i}) + \eps}^{M} L \Per(\{v_{h}>t\}\cap B_{i})\, dt - 4 \eps |\mu|(B_i).
\end{align*}
Arguing analogously, for $x_i \in \Omega^+$ we get
\begin{equation*}
\int_{\Gamma_{1}\cap B_{i}} (v_{h} - u^{\lambda})\, d\mu \ge - \int_{-M}^{u^{-}(x_{i}) - \eps} L P(\{v_{h}< t\}\cap B_{i})\, dt - 4 \eps |\mu|(B_i)
\end{equation*}
In conclusion, by combining \eqref{eq:stimadiffvar}, the above calculations and the definition of $\eps$ \eqref{eq:epsilon_def_rho_Gamma}, we obtain
\begin{align*}
-(K-5)\rho &\ge \sum_{i=1}^{m} \Big(|Dv_{h}|(B_{i}) - |Du|(B_{i}) + \int_{\Gamma_{1}\cap B_{i}} (v_{h}-u^{\lambda})\, d\mu\Big)\\ 
& \ge - \left ( \sum_{i = 1}^m 4 \eps |\mu|(B_i) \right ) - 2(C_{PT}+2)\rho \\
&\ge - 2(C_{PT}+4)\rho
\end{align*}
which gives a contradiction as soon as we fix $K> 2 C_{PT}+13$. This concludes the proof of the lower semicontinuity of $\Jj_{\mu}$.

Finally, we notice that, thanks to \eqref{eq:liminf_out_U}, the area functional outside a small neighborhood of $\Gamma$ does not play any relevant role in the proof, so we may repeat all the above steps to prove \eqref{eq:liminf_local_mu_3}.
\end{proof}

\begin{proof}[Proof of Theorem \ref{thm:sci}]
Thanks to the density-in-energy of smooth functions given by Lemma \ref{lem:density_energy}, it is enough to prove the lower semicontinuity of $\Jj_\mu$ on a sequence of functions $u_{j}\in \cS(\phi)\cap C^{\infty}(\Omega)$, such that $u_{j}\to u$ in $L^{1}(\Omega)$ as $j\to+\infty$. 
Due to the coercivity of $\Jj_\mu$ (Section \ref{sec:coercivity}), we may assume without loss of generality that $u\in \cS(\phi)$, that the sequence $(u_{j})_j$ is bounded in $BV(\Omega)$ and that $Du_j \weakto Du$ in $\Mm(\Omega; \R^n)$, up to passing to a subsequence.

We know that $\mu = \mu_1 + \mu_2 + \mu_3$, where the measures $\mu_j$ are as in Definition \ref{def:tMomega}, with the additional requirements that $\|T_0\|_{L^\infty(\Omega; \R^n)} \le 1$ and that $\mu_2, \mu_3$ are non-extremal. Since $\mu_1 = h \Leb{n}$ for some $h \in L^q(\Omega)$ with $q > n$, we see that
\begin{equation*}
\lim_{j \to + \infty} \int_\Omega u_j \, d \mu_1 = \lim_{j \to + \infty} \int_{\Omega} u_j \, h \, dx = \int_{\Omega} u \, h \, dx =  \int_\Omega u \, d \mu_1,
\end{equation*}
thanks to the Sobolev embedding of $BV(\Omega)$ into $L^{\frac{n}{n-1}}(\Omega)$ (recalling that $\Omega$ is a bounded open set with Lipschitz boundary) and elementary interpolation inequalities. Then, we choose two open sets $U, V$ with Lipschitz boundary such that 
$$U, V \Subset \Omega, \ {\rm dist}(U, V) > 0, \ S \Subset U, \ \Gamma \Subset V \text{ and } \sqrt{1 + |Du|^2}(\partial U \cup \partial V) = 0.$$ 
This is possible since $S$ and $\Gamma$ are compact sets with positive distance and $\sqrt{1 + |Du|^2}$ is a Radon measure.
We notice that, since $u_j$ is smooth, we have also $\sqrt{1 + |Du_j|^2}(\partial U \cup \partial V) = 0$.
Then, by \eqref{eq:liminf_local_mu_2} we get
\begin{equation*}
\liminf_{j \to + \infty} \sqrt{1 + |Du_j|^2}(U) + \int_{\Omega} u_j \, d \mu_2 \ge \sqrt{1 + |Du|^2}(U) + \int_{\Omega} \tilde{u} \, d \mu_2
\end{equation*}
and, by \eqref{eq:liminf_local_mu_3}, we obtain
\begin{equation*}
\liminf_{j \to + \infty} \sqrt{1 + |Du_j|^2}(V) + \int_{\Omega} u_j \, d \mu_3 \ge \sqrt{1 + |Du|^2}(V) + \int_{\Omega} u^\lambda \, d \mu_3.
\end{equation*}
All in all, the superadditivity of the liminf and the lower semicontinuity of the area functional imply
\begin{align*}
\liminf_{j \to + \infty} \Jj_\mu[u_j] & \ge \liminf_{j \to + \infty} \sqrt{1 + |Du_j|^2}(B \setminus ( \overline{U} \cup \overline{V})) + \lim_{j \to + \infty} \int_\Omega u_j \, d \mu_1 \\
& \quad + \liminf_{j \to + \infty} \left ( \sqrt{1 + |Du_j|^2}(U) + \int_{\Omega} u_j \, d \mu_2 \right ) \\
& \quad + \liminf_{j \to + \infty} \left ( \sqrt{1 + |Du_j|^2}(V) + \int_{\Omega} u_j \, d \mu_3 \right ) \\
& \ge \sqrt{1 + |Du|^2}(B \setminus ( \overline{U} \cup \overline{V})) +  \int_\Omega u \, d \mu_1 + \sqrt{1 + |Du|^2}(U) + \int_{\Omega} \tilde{u} \, d \mu_2  \\
& \quad + \sqrt{1 + |Du|^2}(V) + \int_{\Omega} u^\lambda \, d \mu_3 \\
& = \sqrt{1 + |Du|^2}(B) + \int_{\Omega} u^\lambda \, d \mu = \Jj_\mu[u].
\end{align*}
This ends the proof.
\end{proof}

We dispose now of all the necessary tools for granting the existence of a minimizer of $\Jj_{\mu}$ on $\cS(\phi)$. 

\begin{theorem}[Existence of minimizers]\label{thm:existence}
Let $\mu\in \tMm(\Omega)$ be a non-extremal measure, and let $\mu = \mu_1 + \mu_2 + \mu_3$ be the decomposition given in Definition \ref{def:tMomega}. In addition, assume that $\|T_0\|_{L^\infty(\Omega; \R^n)} \le 1$ and that $\mu_2, \mu_3$ satisfy the non-extremality assumption \eqref{eq:subcritical_cond}. Then the functional $\Jj_{\mu}$ admits a minimizer in $\cS(\phi)$.
\end{theorem}
\begin{proof}
We can apply the Direct Method of the Calculus of Variations, owing to the coercivity property shown in Section \ref{sec:coercivity} and to Theorem \ref{thm:sci}. Indeed, note that for any $0<C<+\infty$ the set 
\[
\{u\in \cS(\phi):\ \Jj_{\mu}[u] \le C\}
\] 
is closed in $L^{1}(B)$. Indeed, if $(u_{h})_{h}$ is a sequence of functions in $\cS(\phi)$ such that $\Jj_{\mu}[u_{h}]\le C<+\infty$ for all $h$, and $u_{h}\to u$ in $L^{1}(B)$ (or, equivalently, in $L^{1}(\Omega)$) as $h\to +\infty$, by \eqref{eq:stimaJmu} and the lower semicontinuity of the total variation of the gradient we obtain $u\in \cS(\phi)$ and, by Theorem \ref{thm:sci}, $\Jj_{\mu}[u] \le C$.  

Finally, by Poincar\'e's inequality on $B$ and by compactness in $BV(B)$, we infer that the set $\{u\in \cS(\phi):\ \Jj_{\mu}[u] \le C\}$ is compact, so that there exists a solution to 
\[
\min \{ \Jj_\mu[u] : u \in \cS(\phi) \}.
\]
\end{proof}

\begin{remark} \label{rem:lambda_lambda_mu_eq}
We stress the fact that the choice of $\lambda = \lambda_\mu$ is indeed essential to obtain the existence of the minimum, not only because it is needed to prove the semicontinuity of $\Jj_\mu[\cdot] = \Jj_\mu[\cdot, \lambda_\mu]$ (Theorem \ref{thm:sci}), but also because, thanks to Proposition \ref{prop:relax_lambda_mu}, the infimum of $\Jj_\mu[\cdot, \lambda]$ is attained at some $u \in \cS(\phi)$ if and only if $\lambda(x) = \lambda_\mu(x)$ for $|\mu|$-a.e. $x \in J_u$. Hence, we can select $\lambda = \lambda_\mu$ without loss of generality when dealing with the existence of minimizers from this point onwards.
\end{remark}

\section{Existence of weak solutions of \eqref{eq:PMCM} via convex duality}
\label{sec:duality}

Thanks to Theorem \ref{thm:existence}, we know that, under suitable assumptions on the measure $\mu$, a minimizer $\baru \in BV(B)$ of the functional $\Jj_{\mu}$ exists, and Proposition \ref{prop:relax_lambda_mu} and Remark \ref{rem:lambda_lambda_mu_eq} ensure that a minimizer $u \in \cS(\phi)$ for $\Jj_{\mu}[\cdot, \lambda]$ cannot exist unless $\lambda = \lambda_\mu$ $|\mu|$-a.e. on $J_u$. Therefore, in this section, we fix $\lambda = \lambda_\mu$ and we show that any minimizer $\baru$ is a weak solution to \eqref{eq:PMCM}; that is, there exists a measurable vector field $T$ such that the pair $(\baru,T)$ satisfies the equations in Definition \ref{def:PMCM}. 
Using tools of convex analysis, we will see that $T$ can be determined as an element of the dual of the space $\Mm(\Omega; \R^n)$ of vector-valued Radon measures endowed with the total variation topology and that its action on a suitable subspace of distributional gradients of $BV$ functions is given precisely by the pairing $(T, Du)_{\lambda}$.

\subsection{Preliminaries of convex optimization and duality}
To begin with, we recall some standard notation and facts from convex analysis (with reference to the monography \cite{ekeland1999convex} for most of them). Let $V$ be a locally convex topological vector space, and denote by $V^{*}$ its topological dual. The duality pairing between $v^{*}\in V^{*}$ and $v\in V$ is denoted as $\langle v, v^{*}\rangle$. We say that a function $\Hh:V\to \overline{\R}$ belongs to $\Gamma_{0}(V)$ if there exists a family $\{v^{*}_{\alpha}\}_{\alpha}$ (with $\alpha$ belonging to some index set, that we do not specify) of elements of the dual space $V^{*}$ and a corresponding family of real numbers $\{\beta_{\alpha}\}_{\alpha}$, such that 
\[
\Hh(v) = \sup_{\alpha} \langle v, v^{*}_{\alpha}\rangle + \beta_{\alpha}\,,
\] 
and moreover the image of $\Hh$ is not reduced to the set $\{\pm\infty\}$. To be more specific, any such function $\Hh$ is either constantly $-\infty$ (this is formally the case when the family of pairs $(v_{\alpha}^{*},\beta_{\alpha})$ is empty) or a generic, convex and lower semicontinuous function taking values in $(-\infty,+\infty]$. It is well-known that any $\Hh\in \Gamma_{0}(V)$ is convex and lower semicontinuous. The Legendre transform (or polar) of $\Hh$ is the function $\Hh^{*}:V^{*}\to \overline\R$ defined as
\[
\Hh^{*}(v^{*}) = \sup_{v\in V} \langle v,v^{*}\rangle - \Hh(v)\,.
\]
The subdifferential of $\Hh$ at $v$ is the (possibly empty) set 
\[ \de \Hh(v) = \{ u^{*}\in V^{*} : \Hh(w) \ge \Hh(v) + \langle w-v, u^{*}\rangle \text{ for all } w\in V \} .\]
Associated with $\Hh\in \Gamma_{0}(V)$ we consider the corresponding minimization problem $\cP$ and set
\begin{equation}
\inf \cP := \inf_{v\in V} \Hh(v)\,.
\end{equation}
This is also called the \textit{primal problem}. To obtain a dual formulation of the problem, we need to consider a family of convex and lower semicontinuous perturbations of $\Hh$ defined using an auxiliary space $Y$. More precisely, let $Y$ be a topological vector space and let $Y^{*}$ denote its dual. For more simplicity, the duality pairing between $p\in Y$ and $p^{*}\in Y^{*}$ is again denoted as $\langle p, p^{*}\rangle$. Assume that we have a function $\Phi:V\times Y\to \overline\R$ belonging to $\Gamma_{0}(V\times Y)$, such that $\Hh(v) = \Phi(v,0)$ for all $v\in V$, and consider for any fixed $p\in Y$ the perturbed problem 
\begin{equation}
\inf \cP_{p} := \inf_{v\in V} \Phi(v,p)\,.
\end{equation}
The \textit{dual problem} to $\cP$ is denoted by $\cP^{*}$, and defined as the following maximization problem:
\begin{equation}
\sup \cP^{*} := \sup_{p^{*}\in Y^{*}} -\Phi^{*}(0,p^{*})\,,
\end{equation}
where 
\[
\Phi^{*}(v^{*},p^{*}) = \sup_{(v,p)\in V\times Y} \langle v,v^{*}\rangle + \langle p,p^{*}\rangle - \Phi(v,p)\,.
\]
An immediate property relating the primal and the dual problems is the following inequality:
\[
\sup \cP^{*} \le \inf \cP\,,
\]
which follows immediately from
\[
-\Phi^{*}(0,p^{*}) = -\sup_{v,p} \langle p, p^{*}\rangle - \Phi(v,p) = \inf_{v,p} \Phi(v,p) - \langle p, p^{*}\rangle \le \inf_{v} \Phi(v,0) = \inf \cP\,.
\]
We set
\[
\kappa(p) = \inf_{v\in V} \Phi(v,p)\,.
\]
We remark that $\kappa$ is convex (see \cite{ekeland1999convex}). We now give a couple of definitions and related propositions (see, respectively, \cite[Propositions III-2.1, 2.2, 2.3 and 4.1]{ekeland1999convex}).

\begin{definition}[Normal Problem]
The primal problem $\cP$ is called normal if $\kappa(0)$ is finite and $\kappa$ is lower semicontinuous at $0$. Similarly, the dual problem $\cP^{*}$ is called normal if $\kappa^{*}(0)$ is finite and $\kappa^{*}$ is lower-semicontinuous at $0$ (here $\kappa^{*}$ denotes the Legendre transform of $\kappa$). 
\end{definition}

\begin{proposition}
The following are equivalent:
\begin{itemize}
\item[(i)] $\cP$ is normal;
\item[(ii)] $\cP^{*}$ is normal;
\item[(iii)] $\inf \cP = \sup \cP^{*} \in \R$. 
\end{itemize}
\end{proposition}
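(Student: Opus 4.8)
The plan is to express all four quantities occurring in the statement through the value function $\kappa(p)=\inf_{v\in V}\Phi(v,p)$ and its Legendre transform, and then to reduce the whole equivalence to one elementary fact of convex analysis. First I would record the two identities that do the bookkeeping: on the one hand $\inf\cP=\kappa(0)$ by definition; on the other hand, since
\[
\Phi^{*}(0,p^{*})=\sup_{(v,p)\in V\times Y}\big(\langle p,p^{*}\rangle-\Phi(v,p)\big)=\sup_{p\in Y}\big(\langle p,p^{*}\rangle-\kappa(p)\big)=\kappa^{*}(p^{*}),
\]
one gets $\sup\cP^{*}=\sup_{p^{*}\in Y^{*}}\big(\langle 0,p^{*}\rangle-\kappa^{*}(p^{*})\big)=\kappa^{**}(0)$. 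As $\kappa$ is convex one always has $\kappa^{**}\le\kappa$, which re-proves the weak duality inequality $\sup\cP^{*}\le\inf\cP$ noted above.

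The key lemma I would isolate is: if $\psi$ is a convex function on a locally convex space with $\psi(0)\in\R$, then $\psi(0)=\psi^{**}(0)$ if and only if $\psi$ is lower semicontinuous at $0$. The ``only if'' direction is immediate, because $\psi\ge\psi^{**}$ and $\psi^{**}$ is lower semicontinuous, so $\liminf_{p\to0}\psi(p)\ge\psi^{**}(0)=\psi(0)$. For the ``if'' direction, lower semicontinuity at $0$ together with finiteness there forces $\psi$ to be proper --- otherwise convexity would propagate the value $-\infty$ along segments through $0$ into every neighbourhood of $0$, contradicting $\liminf_{p\to0}\psi(p)\ge\psi(0)>-\infty$ --- so the Fenchel--Moreau theorem applies and $\psi^{**}$ is the lower semicontinuous convex envelope of $\psi$, which by lower semicontinuity at $0$ takes the value $\psi(0)$ there.

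Applying this lemma with $\psi=\kappa$ yields (i)$\Leftrightarrow$(iii) at once: $\cP$ being normal means exactly that $\kappa(0)$ is finite and $\kappa$ is lower semicontinuous at $0$, which by the lemma is equivalent to $\kappa(0)=\kappa^{**}(0)\in\R$, i.e. to $\inf\cP=\sup\cP^{*}\in\R$. For (ii)$\Leftrightarrow$(iii) I would run the same reasoning on the dual side: writing $\cP^{*}$ as the minimization problem $\inf_{p^{*}}\Phi^{*}(0,p^{*})$, perturbed in the variable dual to $V$ by $\Phi^{*}(v^{*},p^{*})$, its value function $g(v^{*}):=\inf_{p^{*}}\Phi^{*}(v^{*},p^{*})$ is convex, plays for $\cP^{*}$ the role that $\kappa$ plays for $\cP$, and satisfies $g(0)=-\sup\cP^{*}$; moreover $\Phi\in\Gamma_{0}(V\times Y)$ gives $\Phi^{**}=\Phi$, whence a direct computation yields $g^{*}(v)=\Phi(v,0)=\Hh(v)$ on $V$, and therefore $g^{**}(0)=\Hh^{*}(0)=-\inf\cP$. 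The lemma applied to $\psi=g$ then shows that normality of $\cP^{*}$ (finiteness and lower semicontinuity of its value function at $0$) is equivalent to $g(0)=g^{**}(0)\in\R$, that is, to $\sup\cP^{*}=\inf\cP\in\R$. The only genuinely delicate point, and the one deserving care, is the properness step inside the lemma that legitimises invoking Fenchel--Moreau; everything else is a routine manipulation of suprema and infima, which I would not spell out and which relies throughout on taking all conjugates with respect to the natural dualities $(V,V^{*})$ and $(Y,Y^{*})$ --- exactly the setting in which $\Phi\in\Gamma_{0}(V\times Y)$ guarantees $\Phi^{**}=\Phi$.
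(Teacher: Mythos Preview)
The paper does not give its own proof of this proposition; it simply records the statement and cites Ekeland--Temam \cite[Propositions III-2.1, 2.2, 2.3]{ekeland1999convex}. Your argument is essentially the standard one found there: identify $\inf\cP=\kappa(0)$ and $\sup\cP^{*}=\kappa^{**}(0)$, and reduce everything to the fact that for a convex function finite at the origin, lower semicontinuity at $0$ is equivalent to $\kappa(0)=\kappa^{**}(0)$. This is correct and is exactly the mechanism behind the cited result.

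One point deserves a remark. For the implication (ii)$\Leftrightarrow$(iii) you introduce the value function $g(v^{*})=\inf_{p^{*}}\Phi^{*}(v^{*},p^{*})$ of the dual problem, which lives on $V^{*}$, and apply your lemma to $g$. This is the right object and your computation $g^{*}=\Hh$, hence $g^{**}(0)=-\inf\cP$, is correct. Note, however, that the paper's \emph{stated} definition of ``$\cP^{*}$ normal'' is phrased in terms of $\kappa^{*}$ (the Legendre transform of $\kappa$, a function on $Y^{*}$), not in terms of $g$. Since any conjugate is automatically lower semicontinuous, that literal reading is somewhat degenerate; the intended meaning, consistent with Ekeland--Temam, is precisely the one you use, namely finiteness and lower semicontinuity at $0$ of the dual value function on $V^{*}$. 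It would strengthen your write-up to flag this explicitly, so the reader sees that your $g$ is what the definition of normality for $\cP^{*}$ should refer to.
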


\begin{definition}[Stable problem]
The primal problem $\cP$ is stable if $\kappa(0)$ is finite and $\de\kappa(0) \neq \emptyset$.
\end{definition}
\begin{proposition}
The following are equivalent:
\begin{itemize}
\item[(i)] $\cP$ is stable;
\item[(ii)] $\cP$ is normal and $\cP^{*}$ admits a solution.
\end{itemize}
\end{proposition}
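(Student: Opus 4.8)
The plan is to reduce the statement to two elementary facts about Legendre transforms: the conjugacy identity relating the value function $\kappa$ to $\Phi$, and the Fenchel--Young equality characterising the subdifferential. First I would record that
\[
\kappa^{*}(p^{*}) = \Phi^{*}(0,p^{*}) \qquad \text{for every } p^{*}\in Y^{*},
\]
which follows by unravelling the definitions: $\kappa^{*}(p^{*}) = \sup_{p}\big(\langle p,p^{*}\rangle - \inf_{v}\Phi(v,p)\big) = \sup_{v,p}\big(\langle p,p^{*}\rangle - \Phi(v,p)\big) = \Phi^{*}(0,p^{*})$. Consequently $\sup\cP^{*} = \sup_{p^{*}}\big(-\Phi^{*}(0,p^{*})\big) = -\inf_{p^{*}}\kappa^{*}(p^{*})$, and $p^{*}$ is a solution of $\cP^{*}$ if and only if $-\Phi^{*}(0,p^{*}) = \sup\cP^{*}$, i.e. $\kappa^{*}(p^{*}) = -\sup\cP^{*}$. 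Second, I would use the general observation that, since $\langle 0,p^{*}\rangle = 0$, one has $p^{*}\in\de\kappa(0)$ if and only if $\kappa(0) + \kappa^{*}(p^{*}) = 0$: indeed $p^{*}\in\de\kappa(0)$ means $\langle w,p^{*}\rangle - \kappa(w)\le -\kappa(0)$ for all $w\in Y$, that is $\kappa^{*}(p^{*})\le -\kappa(0)$, while the reverse inequality $\kappa^{*}(p^{*})\ge -\kappa(0)$ is automatic (take $w=0$ in the supremum defining $\kappa^{*}$).

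With these two remarks the equivalence is short. For (i)$\Rightarrow$(ii), assume $\cP$ is stable, so $\kappa(0)\in\R$ and some $p^{*}\in\de\kappa(0)$ exists; then $\kappa^{*}(p^{*}) = -\kappa(0)$, hence $-\Phi^{*}(0,p^{*}) = \kappa(0) = \inf\cP$. Together with the general bound $\sup\cP^{*}\le\inf\cP$ established above, this forces $\sup\cP^{*} = \inf\cP = \kappa(0)\in\R$, so $\cP$ is normal by the preceding proposition (implication (iii)$\Rightarrow$(i)), and the chosen $p^{*}$ attains $\sup\cP^{*}$, i.e. solves $\cP^{*}$. For (ii)$\Rightarrow$(i), normality of $\cP$ gives $\inf\cP = \sup\cP^{*}\in\R$, and in particular $\kappa(0) = \inf\cP$ is finite; if $p^{*}$ solves $\cP^{*}$ then $\kappa^{*}(p^{*}) = \Phi^{*}(0,p^{*}) = -\sup\cP^{*} = -\kappa(0)$, so $\kappa(0)+\kappa^{*}(p^{*}) = 0$ and therefore $p^{*}\in\de\kappa(0)$; thus $\cP$ is stable.

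There is no genuinely hard step here — everything is formal manipulation of polars. The only points needing a little care are the bookkeeping with the $\pm\infty$ conventions in the definition of $\de\kappa(0)$ (which is precisely why the definition of stability explicitly demands that $\kappa(0)$ be finite) and the correct appeal to the previous proposition when passing between ``$\cP$ is normal'' and the identity $\inf\cP = \sup\cP^{*}\in\R$.
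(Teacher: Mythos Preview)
Your proof is correct and is precisely the standard argument: the two auxiliary identities $\kappa^{*}(p^{*})=\Phi^{*}(0,p^{*})$ and the Fenchel--Young characterisation $p^{*}\in\de\kappa(0)\Leftrightarrow\kappa(0)+\kappa^{*}(p^{*})=0$ reduce both implications to a one-line computation. The paper does not give its own proof of this proposition but simply cites \cite[Propositions III-2.1, 2.2, 2.3 and 4.1]{ekeland1999convex}; what you have written is essentially the proof one finds there, so there is nothing substantive to compare.
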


The next proposition provides a useful stability criterion for the primal problem $\cP$. 
\begin{proposition} \label{prop:ET2.3}
Let $\Phi(v,p)$ be convex, $\inf \cP = \kappa(0)$ be finite, and $\kappa(p)$ be continuous at $p=0$. Then $\cP$ is stable. 
\end{proposition}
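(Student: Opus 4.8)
The plan is to apply a supporting–hyperplane argument to the epigraph of the value function $\kappa(p)=\inf_{v\in V}\Phi(v,p)$. Recall that $\kappa$ is convex on $Y$ (as remarked above), that $\kappa(0)$ is finite by hypothesis, and that $\kappa$ is continuous at $p=0$. Since by definition $\cP$ is stable precisely when $\kappa(0)$ is finite and $\partial\kappa(0)\neq\emptyset$, it suffices to exhibit some $p^{*}\in Y^{*}$ with $\kappa(p)\ge\kappa(0)+\langle p,p^{*}\rangle$ for all $p\in Y$.

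First I would extract the standard consequences of continuity at $0$: there exist a symmetric open neighbourhood $U$ of $0$ in $Y$ and a constant $M\in\R$ with $\kappa\le M$ on $U$. Consider the epigraph $C:=\{(p,t)\in Y\times\R:\ t\ge\kappa(p)\}$, which is convex because $\kappa$ is. The set $U\times(M,+\infty)$ is open and contained in $C$, so $C$ has nonempty interior; on the other hand $(0,\kappa(0))\notin\operatorname{int}C$, since $(0,\kappa(0)-\eps)\notin C$ for every $\eps>0$.

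Next I would invoke the geometric Hahn--Banach theorem, valid in an arbitrary topological vector space: since $C$ is convex with nonempty interior and $(0,\kappa(0))\notin\operatorname{int}C$, there is a nonzero continuous linear functional on $Y\times\R$, which we write as $(p,t)\mapsto\langle p,q^{*}\rangle+\alpha t$ with $q^{*}\in Y^{*}$ and $\alpha\in\R$, such that (up to a choice of sign)
\[
\langle p,q^{*}\rangle+\alpha t\ \le\ \alpha\,\kappa(0)\qquad\text{for every }(p,t)\in C.
\]
The delicate point — and the one I expect to be the main obstacle — is to rule out the ``vertical'' case: letting $t\to+\infty$ with $p\in U$ fixed forces $\alpha\le0$, and if $\alpha=0$ then the inequality gives $\langle p,q^{*}\rangle\le0$ for all $p\in U$, whence $q^{*}=0$ because $U$ is symmetric, contradicting that the functional is nonzero. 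Hence $\alpha<0$, and after dividing by $-\alpha$ we may assume $\alpha=-1$.

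Finally I would conclude: with $\alpha=-1$ the displayed inequality reads $t\ge\kappa(0)+\langle p,q^{*}\rangle$ for all $(p,t)\in C$; choosing $t=\kappa(p)$ when $\kappa(p)<+\infty$ (and noting that the inequality is trivial when $\kappa(p)=+\infty$) yields $\kappa(p)\ge\kappa(0)+\langle p,q^{*}\rangle$ for all $p\in Y$. Thus $q^{*}\in\partial\kappa(0)$, so $\partial\kappa(0)\neq\emptyset$; together with the finiteness of $\kappa(0)$ this means that $\cP$ is stable. The only points requiring care are that $\operatorname{int}C\neq\emptyset$ — which is exactly where continuity, and not merely finiteness, of $\kappa$ at $0$ is used — and the exclusion of the vertical hyperplane, both of which rest on the local boundedness of $\kappa$ near $0$.
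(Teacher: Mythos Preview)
Your argument is correct and is precisely the standard supporting-hyperplane proof of this fact. Note, however, that the paper does not supply its own proof of this proposition: it is quoted as a standard result from convex analysis, with a reference to \cite[Proposition III-2.3]{ekeland1999convex}, where the same Hahn--Banach argument you outline is carried out.
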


Now we assume that $\Hh(v) = \Ii(v,\Lambda v)$, where $\Ii : V\times Y\to \overline\R$ and $\Lambda :V\to Y$ is linear and continuous. Let also $\Lambda^{*}:Y^{*}\to V^{*}$ denote the adjoint of $\Lambda$, defined via the property $\langle v, \Lambda^{*}p^{*}\rangle = \langle \Lambda v, p^{*}\rangle$ for $v\in V$ and $p^{*}\in Y^{*}$. Clearly the primal problem $\cP$ corresponds to
\[
\inf_{v\in V} \Ii(v,\Lambda v)\,.
\]
We perturb the primal problem by setting
\[
\Phi(v,p) = \Ii(v,\Lambda v - p)
\]
so that we obtain
\begin{align*}
\Phi^{*}(0,p^{*}) &= \sup_{p} \langle p,p^{*}\rangle - \inf_{v} \Ii(v,\Lambda v - p)\\
&= \sup_{q} \sup_{v} \langle \Lambda v - q,p^{*}\rangle - \Ii(v,q)\\
&= \sup_{v,q} \langle v, \Lambda^{*}p^{*}\rangle + \langle q, -p^{*}\rangle - \Ii(v,q)\\
&= \Ii^{*}(\Lambda^{*}p^{*},-p^{*})\,.
\end{align*}
In conclusion, the dual problem $\cP^{*}$ in this specific case is given by
\[
\sup_{p^{*}\in Y^{*}} - \Ii^{*}(\Lambda^{*}p^{*}, - p^{*})\,.
\]
Next, we state the key result that allows us to characterize the solutions of the primal and the dual problems.
\begin{proposition}\label{prop:ET4.1}
The following are equivalent:
\begin{itemize}
\item[(i)] $\baru$ solves $\cP$, $\barp^{*}$ solves $\cP^{*}$, and $\sup \cP^{*} = \inf \cP$;

\item[(ii)] $\baru\in V$ and $\barp^{*}\in Y^{*}$ verify the extremality relation
\begin{equation}\label{eq:extremality0}
\Ii(\baru, \Lambda \baru) + \Ii^{*}(\Lambda^{*}\barp^{*}, -\barp^{*}) = 0\,,
\end{equation}
that is to say, $(\Lambda^{*}\barp^{*},-\barp^{*}) \in \de \Ii(\baru, \Lambda\baru)$.
\end{itemize}
\end{proposition}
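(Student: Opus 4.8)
The plan is to derive everything from the Fenchel--Young inequality for $\Ii$ together with the defining property of the adjoint $\Lambda^{*}$. First I would record that, straight from the definition of the polar $\Ii^{*}$, for all $v\in V$, $q\in Y$, $v^{*}\in V^{*}$, $p^{*}\in Y^{*}$ one has $\Ii(v,q)+\Ii^{*}(v^{*},p^{*})\ge \langle v,v^{*}\rangle+\langle q,p^{*}\rangle$. Specializing to $q=\Lambda v$, $v^{*}=\Lambda^{*}p^{*}$ and $-p^{*}$ in place of $p^{*}$, and using $\langle v,\Lambda^{*}p^{*}\rangle=\langle \Lambda v,p^{*}\rangle$, the right-hand side collapses to $0$, so that
\[
\Ii(v,\Lambda v)+\Ii^{*}(\Lambda^{*}p^{*},-p^{*})\ge 0\qquad\text{for all }v\in V,\ p^{*}\in Y^{*}.
\]
In particular this re-proves weak duality $\sup\cP^{*}\le\inf\cP$, and it shows that the extremality relation \eqref{eq:extremality0} is exactly the assertion that the above inequality is an equality at the pair $(\baru,\barp^{*})$. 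Moreover, by the standard Fenchel characterization of the subdifferential for functions in $\Gamma_{0}(V\times Y)$, namely $w^{*}\in\de\Hh(w)\iff\Hh(w)+\Hh^{*}(w^{*})=\langle w,w^{*}\rangle$, applied with $\Hh=\Ii$, $w=(\baru,\Lambda\baru)$, $w^{*}=(\Lambda^{*}\barp^{*},-\barp^{*})$ and $\langle w,w^{*}\rangle=0$, relation \eqref{eq:extremality0} is equivalent to $(\Lambda^{*}\barp^{*},-\barp^{*})\in\de\Ii(\baru,\Lambda\baru)$, as stated.

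For the implication (i)$\Rightarrow$(ii), if $\baru$ solves $\cP$, $\barp^{*}$ solves $\cP^{*}$, and $\inf\cP=\sup\cP^{*}\in\R$, then by definition $\Ii(\baru,\Lambda\baru)=\inf\cP$ and $\Ii^{*}(\Lambda^{*}\barp^{*},-\barp^{*})=-\sup\cP^{*}$, so their sum equals $\inf\cP-\sup\cP^{*}=0$, which is \eqref{eq:extremality0}. For (ii)$\Rightarrow$(i), assuming \eqref{eq:extremality0} I would write
\[
0=\Ii(\baru,\Lambda\baru)+\Ii^{*}(\Lambda^{*}\barp^{*},-\barp^{*})\ \ge\ \inf\cP-\sup\cP^{*}\ \ge\ 0,
\]
where the first inequality uses $\Ii(\baru,\Lambda\baru)\ge\inf_{v}\Ii(v,\Lambda v)=\inf\cP$ together with $\Ii^{*}(\Lambda^{*}\barp^{*},-\barp^{*})\ge\inf_{p^{*}}\Ii^{*}(\Lambda^{*}p^{*},-p^{*})=-\sup\cP^{*}$, and the second is weak duality. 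Since the chain is forced to collapse to a string of equalities, one obtains simultaneously $\inf\cP=\sup\cP^{*}$, this common value being finite because it is pinched between two finite numbers, $\Ii(\baru,\Lambda\baru)=\inf\cP$ (hence $\baru$ solves $\cP$), and $\Ii^{*}(\Lambda^{*}\barp^{*},-\barp^{*})=-\sup\cP^{*}$ (hence $\barp^{*}$ solves $\cP^{*}$).

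The argument is essentially bookkeeping, so there is no single hard step; the only point that needs a little care is to avoid any $+\infty-\infty$ ambiguity when manipulating the values of $\Ii$ and $\Ii^{*}$. This is automatic here: the extremality relation states that a sum of two extended-real numbers equals $0$, which forces both summands to be finite, and the hypothesis $\Ii\in\Gamma_{0}(V\times Y)$ (so that $\Phi(v,p)=\Ii(v,\Lambda v-p)$ lies in $\Gamma_{0}(V\times Y)$ and $\Ii^{*}\in\Gamma_{0}(V^{*}\times Y^{*})$) guarantees that none of the infima or suprema above degenerates in a way that would break the estimates.
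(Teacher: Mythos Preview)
Your proof is correct and is essentially the standard argument from Ekeland--Temam. Note, however, that the paper does not actually supply its own proof of this proposition: it is stated as a quoted result from \cite[Proposition III-4.1]{ekeland1999convex}, so there is no paper-proof to compare against. Your Fenchel--Young sandwich argument, together with the observation that the collapse of the chain $0=\Ii(\baru,\Lambda\baru)+\Ii^{*}(\Lambda^{*}\barp^{*},-\barp^{*})\ge\inf\cP-\sup\cP^{*}\ge 0$ forces each individual inequality to be an equality, is exactly the proof one finds in that reference, and your handling of the finiteness issue (both summands lie in $(-\infty,+\infty]$ because $\Ii\in\Gamma_{0}$, so a vanishing sum forces both to be finite) is adequate.
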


We now further specialise this setting to the case $\Hh(v) = \Ii(v,\Lambda v) = \Ff(v) + \Gg(\Lambda v)$, for some functions $\Ff : V \to \overline{\R}$ and $\Gg : Y \to \overline{\R}$. In this case, assuming all the necessary convexity and lower semicontinuity properties of the functions $\Ff$ and $\Gg$, we have that $\cP^{*}$ corresponds to 
\[
\sup_{p^{*}\in Y^{*}} - \Ff^{*}(\Lambda^{*}p^{*}) - \Gg^{*}(-p^{*}).
\]
Moreover the extremality relation \eqref{eq:extremality0} is equivalent to the two relations
\begin{equation}\label{eq:extremality1}
\Ff(\baru) + \Ff^{*}(\Lambda^{*}\barp^{*}) = \langle \baru, \Lambda^{*}\barp^{*}\rangle
\end{equation}
and
\begin{equation}\label{eq:extremality2}
\Gg(\Lambda \baru) + \Gg^{*}(-\barp^{*}) = -\langle \baru, \Lambda^{*}\barp^{*}\rangle\,.
\end{equation}
These last two relations correspond to $\Lambda^{*}\barp^{*}\in \de \Ff(\baru)$ and $-\barp^{*}\in \de \Gg(\Lambda\baru)$, respectively.

\subsection{Optimality relations}

To recover the weak formulation we let $\mu \in \tMm(\Omega)$ and consider a minimizer $\baru \in BV(B)$ of $\Jj_{\mu}$. Then, we recall that the jump part of the distributional gradient of $\baru$ is of the form $(\baru^{+}-\baru^{-})\nu_{\baru}\ \Haus{n-1}\restrict J_{\baru}$. We shall define the functional $\Ff$ (see \eqref{eq:F_functional}) in such a way that it is finite on functions $u\in BV(\Omega)$ such that the trace of $u$ on $\de \Omega$ equals that of $\baru$, and $\nu_{u} = \nu_{\baru}$ $\Haus{n-1}$-almost everywhere on $J_{u}\cap \Gamma$, where $\Gamma$ is a compact, Lebesgue negligible set on which the measure $\mu_3$ from the decomposition of $\mu$ is concentrated, see Definition \ref{def:tMomega}.
Then we let
\[
V = BV(\Omega)
\]
endowed with the strong $BV$ topology. We then let $Y=\Mm(\Omega; \R^n)$ endowed with the total variation topology. On noting that the standard embedding $\Phi:L^{1}(\Omega;\R^{n})\mapsto Y$ induces a surjective restriction operator $R:Y^{*}\mapsto L^{\infty}(\Omega;\R^{n})$ defined as $R(y^{*})(f) = \langle y^{*},\Phi(f)\rangle$, from now on we shall identify $Y^{*}$ with $R(Y^{*})$ with a slight abuse of notation. 

We define a particular set $\tV \subset V$ by setting
\begin{align} 
\tV := \{ u\in V :  {\rm Tr}_{\partial \Omega}(u) = {\rm Tr}_{\partial \Omega}(\baru) \text{ in } L^1(\partial \Omega; \Haus{n-1}), & \ \Haus{n-1}(J_{u}\setminus J_{\baru}) = 0, \nonumber \\
& \nu_{u} = \nu_{\baru} \ \Haus{n-1}\text{-a.e. on } J_{u}\cap \Gamma  \}. \label{def:tV}
\end{align}
We notice that this definition can be relaxed by assuming, instead of the last two conditions, that the vector field $\nu_{\baru}$ admits an extension $\mathcal{N}$ to the whole $\Gamma$, so that one requires $\nu_{u} = \mathcal{N}$ $\Haus{n-1}$-almost everywhere on $J_u \cap \Gamma$.

We show now that $\tV$ is a closed and convex subset of $V$.

\begin{lemma}
$\tV$ is a closed and convex subset of $V$.
\end{lemma}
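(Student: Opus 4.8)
\medskip
\noindent\textbf{Proof plan.}
The plan is to verify the two defining properties, convexity and closedness, separately, using only standard facts on the fine structure of $BV$ functions \cite{AFP} (continuity of the trace operator, the decomposition $Dv=D^{a}v+D^{c}v+D^{j}v$, and $\Haus{n-1}(S_{v}\setminus J_{v})=0$) together with elementary properties of vector--valued Radon measures under convergence in the total variation norm.

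For \emph{convexity}, take $u_{0},u_{1}\in\tV$ and $t\in(0,1)$, and set $u=tu_{1}+(1-t)u_{0}$ (the endpoint cases being trivial). Linearity of ${\rm Tr}_{\partial\Omega}$ gives ${\rm Tr}_{\partial\Omega}(u)={\rm Tr}_{\partial\Omega}(\baru)$. Linearity of the approximate limit gives $S_{u}\subseteq S_{u_{0}}\cup S_{u_{1}}$, and since $\Haus{n-1}(S_{u_{i}}\setminus J_{u_{i}})=0$ and $\Haus{n-1}(J_{u_{i}}\setminus J_{\baru})=0$ this forces $\Haus{n-1}(J_{u}\setminus J_{\baru})=0$. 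For $\Haus{n-1}$-a.e.\ $x\in J_{u}\cap\Gamma$ one has, for each $i$, either $x\in J_{u_{i}}$ with $\nu_{u_{i}}(x)=\nu_{\baru}(x)$, or $x\notin S_{u_{i}}$, and at least one index lies in the jump set; hence the blow-up of $u$ at $x$ is the convex combination of two profiles, each constant or jumping across the hyperplane orthogonal to $\nu_{\baru}(x)$ with strictly positive height (by the convention $u_{i}^{+}>u_{i}^{-}$). Since $t\in(0,1)$ the total jump height is still strictly positive, so $x\in J_{u}$ and $\nu_{u}(x)=\nu_{\baru}(x)$; thus $u\in\tV$.

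For \emph{closedness}, let $u_{j}\in\tV$ with $u_{j}\to u$ strongly in $BV(\Omega)$, i.e.\ $u_{j}\to u$ in $L^{1}(\Omega)$ and $|Du_{j}-Du|(\Omega)\to0$. Continuity of the trace operator gives ${\rm Tr}_{\partial\Omega}(u)={\rm Tr}_{\partial\Omega}(\baru)$. The crucial observation is that strong $BV$ convergence upgrades to convergence of the jump parts in total variation: with $S=S_{u_{j}}\cup S_{u}$, which is $\Haus{n-1}$-$\sigma$-finite and $\Leb{n}$-negligible, the diffuse parts $D^{a}u_{j}+D^{c}u_{j}$ and $D^{a}u+D^{c}u$ vanish on $S$, so $D^{j}u_{j}=Du_{j}\restrict S$ and $D^{j}u=Du\restrict S$, whence $|D^{j}u_{j}-D^{j}u|(\Omega)\le|Du_{j}-Du|(\Omega)\to0$. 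Each $D^{j}u_{j}=(u_{j}^{+}-u_{j}^{-})\nu_{u_{j}}\,\Haus{n-1}\restrict J_{u_{j}}$ is absolutely continuous with respect to the fixed measure $\Haus{n-1}\restrict J_{\baru}$ (because $\Haus{n-1}(J_{u_{j}}\setminus J_{\baru})=0$), a property stable under total variation limits, so $D^{j}u\ll\Haus{n-1}\restrict J_{\baru}$; evaluating on $J_{u}\setminus J_{\baru}$, where $|D^{j}u|=(u^{+}-u^{-})\Haus{n-1}$ with $u^{+}-u^{-}>0$, yields $\Haus{n-1}(J_{u}\setminus J_{\baru})=0$. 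Finally, restriction to the Borel set $\Gamma$ is a contraction in total variation norm, so $D^{j}u_{j}\restrict\Gamma\to D^{j}u\restrict\Gamma$; the polar (density with respect to the total variation) of $D^{j}u_{j}\restrict\Gamma$ equals $\nu_{\baru}$ $\Haus{n-1}$-a.e.\ on $J_{u_{j}}\cap\Gamma$, and since multiplication of $|D^{j}u_{j}\restrict\Gamma|$ by the fixed unit field $\nu_{\baru}$ is continuous in total variation (again using that the total variation of $|D^{j}u_{j}\restrict\Gamma|-|D^{j}u\restrict\Gamma|$ is controlled by $|D^{j}u_{j}\restrict\Gamma-D^{j}u\restrict\Gamma|(\Omega)$), uniqueness of the limit forces the polar of $D^{j}u\restrict\Gamma$ to be $\nu_{\baru}$, i.e.\ $\nu_{u}=\nu_{\baru}$ $\Haus{n-1}$-a.e.\ on $J_{u}\cap\Gamma$. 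Hence $u\in\tV$.

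The one genuinely nonformal point, and the step I expect to be the main obstacle, is precisely this upgrade: strict convergence in $BV$ would only give weak-$*$ convergence of the jump measures, which is not enough to transfer either their location $J_{\baru}$ or their orientation $\nu_{\baru}$ to the limit; it is the norm convergence $|Du_{j}-Du|(\Omega)\to0$ that delivers total variation convergence of $D^{j}u_{j}$ and, after restriction to $\Gamma$, of its polar. I would take care to write out cleanly the measure--theoretic identities $D^{j}v=Dv\restrict S$ and $D^{j}u_{j}\restrict\Gamma=\nu_{\baru}\,|D^{j}u_{j}\restrict\Gamma|$ on which this argument rests.
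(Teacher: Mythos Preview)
Your proposal is correct and follows essentially the same strategy as the paper for closedness: both hinge on the upgrade from strong $BV$ convergence to total--variation convergence of the jump parts $D^{j}u_{k}\to D^{j}u$, and then use this to transfer the location and orientation constraints. Two differences are worth noting. First, for the orientation condition $\nu_{u}=\nu_{\baru}$ on $J_{u}\cap\Gamma$, the paper argues by contradiction: from $|D^{j}u_{k}-D^{j}u|(J_{u}\cap\Gamma)\to0$ it extracts a subsequence with $(u_{k}^{+}-u_{k}^{-})\nu_{\baru}\to(u^{+}-u^{-})\nu_{u}$ pointwise $\Haus{n-1}$-a.e., and then rules out $\nu_{u}\neq\nu_{\baru}$ on a set of positive measure. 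Your polar--decomposition route (stability of absolute continuity and of total variations under norm convergence, then uniqueness of the limit of $\nu_{\baru}\,|D^{j}u_{k}\res\Gamma|$) is a clean alternative that avoids the subsequence extraction. Second, you actually prove convexity, which the paper's proof of this lemma omits entirely: the statement asserts both closedness and convexity, but only closedness is argued, with convexity effectively absorbed into the later proof that $\Ff\in\Gamma_{0}(V)$.
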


\begin{proof}
Convexity of $\tV$ follows immediately from the definition.
Let $(u_k)_{k \in \N} \subset \tV$ be such that $u_k \to u$ with respect to the strong topology of $BV(\Omega)$. Clearly, $u \in BV(\Omega)$, so we need to check the conditions in the definition of $\tV$, \eqref{def:tV}. By the continuity of the trace operator (we refer for instance to \cite[Theorems 3.87 and 3.88]{AFP}), we have
\begin{equation*}
\|{\rm Tr}_{\partial \Omega}(u_k) - {\rm Tr}_{\partial \Omega}(u)\|_{L^1(\partial \Omega; \Haus{n-1})} \le C \|u_k - u \|_{BV(\Omega)} \to 0 \ \text{ as } \ j \to + \infty.
\end{equation*}
However, by definition of $\tV$ \eqref{def:tV}, we have ${\rm Tr}_{\partial \Omega}(u_k) = {\rm Tr}_{\partial \Omega}(\baru)$ for all $k \in \N$, and so we conclude that ${\rm Tr}_{\partial \Omega}(u) = {\rm Tr}_{\partial \Omega}(\baru)$ in $L^1(\partial \Omega; \Haus{n-1})$. Then, the strong convergence implies that 
\begin{equation} \label{eq:strong_conv_jump}
| D^j u_k - D^j u |(B) \to 0 \ \text{ as } \ k \to + \infty
\end{equation}
for all Borel sets $B \subseteq \Omega$. In particular, by choosing $B = \Omega \setminus J_{\baru}$, we get
\begin{align*}
\int_{J_u \setminus J_{\baru}} |u^+ - u^-| \, d \Haus{n-1} & = |D^j u|(\Omega \setminus J_{\baru}) = \lim_{k \to + \infty} |D^j u_k|(\Omega \setminus J_{\baru}) \\
& = \lim_{k \to + \infty} \int_{J_{u_k} \setminus J_{\baru}} |u^+_k - u^-_k| \, d \Haus{n-1} = 0 
\end{align*}
by \eqref{def:tV}. This implies that either $\Haus{n-1}(J_{u}\setminus J_{\baru}) = 0$ or $|u^+ - u^-| = 0$ $\Haus{n-1}$-a.e. on $J_u \setminus J_{\baru}$. However, if $u^+(x) = u^-(x)$, then $x \notin J_u$, so that the second case is not possible. Finally, we need to prove that $\nu_{u} = \nu_{\baru} \ \Haus{n-1}$-a.e. on $J_{u}\cap \Gamma$. Without loss of generality, we may assume $\Haus{n-1}(J_u \cap \Gamma) > 0$. By applying \eqref{eq:strong_conv_jump} to $B = J_u \cap \Gamma$ we get
\begin{align*}
| D^j u_k - D^j u |(J_u \cap \Gamma) & = \int_{J_u \cap \Gamma} \left | (u^+_k - u^-_k) \nu_{u_k} - (u^+ - u^-) \nu_u \right | \, d \Haus{n -1} \\
& = \int_{J_u \cap \Gamma} \left | (u^+_k - u^-_k) \nu_{\baru} - (u^+ - u^-) \nu_u \right | \, d \Haus{n -1} \to 0,
\end{align*}
which is possible if and only if $\nu_{u} = \nu_{\baru} \ \Haus{n-1}$-a.e. on $J_{u}\cap \Gamma$. Indeed, one can argue by contradiction assuming that there exist $\delta>0$ and a set $K\subset J_{u}\cap \Gamma$ with $\Haus{n-1}(K)>0$, such that $u^{+}(x)-u^{-}(x)> \delta$ and $|\nu_{u}(x)-\nu_{\baru}(x)|>\delta$ whenever $x\in K$. Up to extracting a subsequence and replacing $K$ with an $\Haus{n-1}$-equivalent set, we can assume that for all $x\in K$, 
\begin{equation}\label{eq:convnormali}
f_{k}(x)\nu_{\baru}(x) \to f(x)\nu_{u}(x)\qquad \text{as $k\to +\infty$,}
\end{equation}
where $f_{k}(x) = u_{k}^{+}(x) - u_{k}^{-}(x)$ and $f(x) = u^{+}(x) - u^{-}(x)$. Since $f_{k}\ge 0$ and $f>\delta$ on $K$, by \eqref{eq:convnormali} one infers that $f_{k}(x)\to f(x)$ for all $x\in K$, and consequently that $|\nu_{\baru}(x) - \nu_{u}(x)|=0$ for all $x\in K$, a contradiction. This ends the proof.
\end{proof}

We define $\Ff:V\to \overline\R$ as
\begin{equation} \label{eq:F_functional}
\Ff(u) = \begin{cases}
\ds\int_\Omega u^{\lambda}\, d\mu & \text{if }u\in \tV\\
+\infty & \text{otherwise.}
\end{cases}
\end{equation}
and $\Gg:Y\to \overline\R$ as
\begin{align*}
\Gg(p) & = \sqrt{1+|p|^{2}}(\Omega) \\
&= \sup \left \{ \int_{\Omega} g_{0} \, dx + \int_{\Omega}g\cdot dp \, : (g_{0},g)\in C^{0}_{c}(\Omega)\times C^{0}_{c}(\Omega;\R^{n}) \text{ such that } g_{0}^{2} + |g|^{2}\le 1 \text{ on } \Omega \right \}.
\end{align*}

\begin{proposition}
Under the previous assumptions, $\Ff\in \Gamma_{0}(V)$. 
\end{proposition}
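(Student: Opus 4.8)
The plan is to check that $\Ff$ is proper, convex and lower semicontinuous on the locally convex space $V=BV(\Omega)$ with the strong $BV$ topology; by the classical Fenchel--Moreau characterization (see \cite[Chapter I]{ekeland1999convex}), any proper convex lower semicontinuous function coincides with the supremum of the continuous affine functions it dominates, hence lies in $\Gamma_{0}(V)$. Throughout I will use that $\mu\in\tMm(\Omega)$ is in particular admissible, so that by Corollary \ref{lem:admiss2dual} (together with Lemma \ref{lem:2a2b-bis}) one has $u^{\lambda}\in L^{1}(\Omega;|\mu|)$ for every $u\in BV(\Omega)$; consequently $\int_{\Omega}u^{\lambda}\,d\mu$ is a finite real number for every $u\in\tV$. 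Since $\baru\in\tV$, this gives $\Ff(\baru)\in\R$, while $\Ff$ never takes the value $-\infty$, so $\Ff$ is proper.

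For convexity I would exploit that $\tV$ is convex (preceding lemma) and that $u\mapsto\int_{\Omega}u^{\lambda}\,d\mu$ is affine on $\tV$. Writing $\mu=h\,\Leb{n}\restrict\Omega+\gamma\,\Haus{n-1}\restrict\Gamma$ as in Definition \ref{def:tMomega}, the absolutely continuous part contributes $\int_{\Omega}uh\,dx$, which is linear since $u^{\lambda}=u$ $\Leb{n}$-a.e. For the part carried by $\Gamma$, the key point is that membership in $\tV$ pins down the jump geometry there: for $\Haus{n-1}$-a.e.\ $x\in\Gamma$, every $u\in\tV$ is either approximately continuous at $x$ or jumps with $\nu_{u}(x)=\nu_{\baru}(x)$. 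Hence, given $u_{1},u_{2}\in\tV$, $t\in[0,1]$ and $u=tu_{1}+(1-t)u_{2}$, for $\Haus{n-1}$-a.e.\ $x\in\Gamma$ the two one-sided approximate limits of $u$ are the $t$-convex combinations of those of $u_{1}$ and $u_{2}$, the half-space into which $\nu_{\baru}(x)$ points being common to all competitors; taking the ordering convention $u^{+}\ge u^{-}$ into account, this yields $u^{\pm}=tu_{1}^{\pm}+(1-t)u_{2}^{\pm}$ and so $u^{\lambda}=tu_{1}^{\lambda}+(1-t)u_{2}^{\lambda}$ $\Haus{n-1}$-a.e.\ on $\Gamma$. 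Therefore $\int_{\Omega}u^{\lambda}\,d\mu=t\int_{\Omega}u_{1}^{\lambda}\,d\mu+(1-t)\int_{\Omega}u_{2}^{\lambda}\,d\mu$, and since $\Ff\equiv+\infty$ off $\tV$ this makes $\Ff$ convex on $V$.

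For lower semicontinuity, let $u_{k}\to u$ strongly in $BV(\Omega)$ with $\ell:=\liminf_{k}\Ff(u_{k})<+\infty$, and choose a subsequence $u_{k_{j}}$ with $\Ff(u_{k_{j}})\to\ell$; for $j$ large $u_{k_{j}}\in\tV$, and since $\tV$ is closed (preceding lemma) also $u\in\tV$, so it suffices to prove $\int_{\Omega}u_{k_{j}}^{\lambda}\,d\mu\to\int_{\Omega}u^{\lambda}\,d\mu$. I would split, using $u^{*}=\tfrac12(u^{+}+u^{-})$ $\Haus{n-1}$-a.e., as $u_{k}^{\lambda}=u_{k}^{*}+(\lambda-\tfrac12)(u_{k}^{+}-u_{k}^{-})$. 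The contribution of $u_{k}^{*}$ equals $\T_{\mu}(u_{k})$, which converges to $\T_{\mu}(u)$ because $\T_{\mu}$ is a continuous linear functional on $BV(\Omega)$ by Corollary \ref{lem:admiss2dual}. For the remaining term, since $u_{k},u\in\tV$ one has $D^{j}u_{k}\restrict\Gamma=(u_{k}^{+}-u_{k}^{-})\,\nu_{\baru}\,\Haus{n-1}\restrict(J_{u_{k}}\cap\Gamma)$ and similarly for $u$; strong $BV$ convergence gives $|D^{j}u_{k}-D^{j}u|(\Gamma)\to0$ (as already used in the preceding lemma), that is $\int_{\Gamma}|(u_{k}^{+}-u_{k}^{-})-(u^{+}-u^{-})|\,d\Haus{n-1}\to0$ (recall $\Haus{n-1}(\Gamma)<+\infty$ and that the traces are extended by the common value at approximate-continuity points), whence $\int_{\Gamma}(\lambda-\tfrac12)(u_{k}^{+}-u_{k}^{-})\gamma\,d\Haus{n-1}\to\int_{\Gamma}(\lambda-\tfrac12)(u^{+}-u^{-})\gamma\,d\Haus{n-1}$ because $|\lambda-\tfrac12|\le\tfrac12$ and $\gamma\in L^{\infty}(\Haus{n-1}\restrict\Gamma)$. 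Adding the two contributions gives $\Ff(u_{k_{j}})\to\Ff(u)$, hence $\Ff(u)=\ell=\liminf_{k}\Ff(u_{k})$.

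Having established that $\Ff$ is proper, convex and lower semicontinuous, I conclude $\Ff\in\Gamma_{0}(V)$ by the Fenchel--Moreau characterization quoted at the start. The only genuinely delicate step is the lower semicontinuity: one has to separate $u^{\lambda}$ into its precise-representative part --- handled by the fact that $\mu\in BV(\Omega)^{*}$ --- and its jump-amplitude part --- handled by the convergence of the jump parts of the gradients --- since neither piece is continuous along general strongly convergent $BV$ sequences, whereas their combination is, precisely thanks to the common jump geometry enforced by $\tV$ on the support $\Gamma$ of the singular part of $\mu$.
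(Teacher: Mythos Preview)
Your proof is correct and follows essentially the same approach as the paper: you verify properness, convexity (via the affineness of $u\mapsto\int_\Omega u^\lambda\,d\mu$ on $\tV$, using the common jump direction $\nu_{\baru}$ on $\Gamma$), and lower semicontinuity. The only notable difference is that the paper dispatches lower semicontinuity in one line (``continuity of $\int_\Omega u^\lambda\,d\mu$ with respect to $BV$-norm convergence''), whereas you make this explicit by splitting $u^\lambda=u^{*}+(\lambda-\tfrac12)(u^{+}-u^{-})$, handling the first piece via $\T_\mu\in BV(\Omega)^{*}$ and the second via $|D^{j}(u_k-u)|(\Gamma)\le|D(u_k-u)|(\Omega)\to0$; this is a clean way to justify the assertion.
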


\begin{proof}
We only need to prove that $\Ff$ is not identically $\pm\infty$, is convex, and is lower semicontinuous in $V$. The first property is an obvious consequence of Proposition \ref{prop:trunc-smooth-coarea}. As for the convexity, it is clearly enough to prove it on $\tV$, and we notice that by Definition \ref{def:tMomega} and Theorem \ref{thm:liminf_mu_2}, for all $u \in \tV$ we have 
\begin{equation*}
\int_\Omega u^{\lambda}\, d\mu = \int_\Omega u \, d\mu_1 + \int_\Omega \tilde u \, d\mu_2 + \int_\Omega u^{\lambda}\, d\mu_3.
\end{equation*}
Hence, we need only consider the last integral, given that the first two are linear in $u$. Let now $t_1, t_2 \in \R$ and $u_{1},u_{2}\in \tV$. Defining $u_{t} = t u_{1} + (1-t) u_{2}$ we have $u_{t}^{\pm} = t u_{1}^{\pm} + (1-t) u_{2}^{\pm}$ $|\mu_3|$-almost everywhere on $\Gamma$ because $\nu_{u_{1}} = \nu_{u_{2}}$ $|\mu_3|$-almost everywhere on $\Gamma\cap J_{u_{1}}\cap J_{u_{2}}$, thanks to \eqref{def:tV} and the fact that $\mu_3 \in \MH(\Omega)$. Therefore we obtain
\[
\int_\Omega u_{t}^{\lambda}\, d\mu_3 = t \int_\Omega u_{1}^{\lambda}\, d\mu_3 + (1-t) \int_\Omega u_{2}^{\lambda}\, d\mu_3\,,
\]
which implies that $\Ff$ is affine on $\tV$ and shows, in particular, the convexity of $\Ff$. Finally, the lower semicontinuity is a consequence of the continuity of the integral $\displaystyle \int_\Omega u^{\lambda}\, d\mu$ with respect to convergence in $BV$ norm (see Lemma \ref{lem:2a2b-bis}).
\end{proof}

Finally, we let $\Lambda = D$ be the distributional derivative operator, which is linear and continuous from $V$ to $Y$. We then have for every $u\in \widetilde{V}$
\[
\Jj_{\mu}[u] = \Gg(Du) + \Ff(u)
\]
so that $\Phi(u,p) = \Gg(Du-p) + \Ff(u)$ belongs to $\Gamma_{0}(V\times Y)$. Moreover, we have the following result.

\begin{proposition}\label{prop:kappalsc}
Let $\kappa(p) = \inf_{u\in V} \Phi(u,p)$, then $\kappa(0)$ is finite, and $\kappa$ is continuous at $0$.
\end{proposition}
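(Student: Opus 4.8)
The plan is to verify the two assertions — finiteness of $\kappa(0)$ and continuity of $\kappa$ at $0$ — by using the single competitor $u=\baru$, together with the non-extremality estimate of Proposition \ref{prop:trunc-smooth-coarea} and the convexity of $\kappa$. Recall that $\kappa(p) = \inf_{u\in\tV}\big[\sqrt{1+|Du-p|^{2}}(\Omega) + \int_{\Omega}u^{\lambda}\, d\mu\big]$, and that $\kappa$ is convex on $Y=\Mm(\Omega;\R^{n})$, since $\Phi(u,p)=\Gg(Du-p)+\Ff(u)$ is jointly convex (the sum of the convex $\Ff$ and of $\Gg$ composed with the linear map $(u,p)\mapsto Du-p$). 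The final ingredient will be the classical fact (see \cite{ekeland1999convex}) that a convex function on a normed space which is finite at a point and bounded above on a neighbourhood of that point is continuous there.

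\emph{Finiteness of $\kappa(0)$.} Set $m := \int_{\partial \Omega}|{\rm Tr}_{\partial \Omega}(\baru)|\, d\Haus{n-1}$, which is finite since $\baru\in BV(\Omega)$. For the upper bound, observe that $\baru$ trivially satisfies the conditions defining $\tV$ in \eqref{def:tV}, so $\kappa(0)\le\Phi(\baru,0)=\sqrt{1+|D\baru|^{2}}(\Omega)+\int_{\Omega}\baru^{\lambda}\, d\mu$; the area term is finite, and the integral term is finite because $\mu$ is admissible, so Lemma \ref{lem:2a2b-bis} gives $\baru^{\pm}\in L^{1}(\Omega;|\mu|)$ and hence $\baru^{\lambda}=\lambda\baru^{+}+(1-\lambda)\baru^{-}\in L^{1}(\Omega;|\mu|)$. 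For the lower bound, fix any $u\in\tV$; then ${\rm Tr}_{\partial \Omega}(u)={\rm Tr}_{\partial \Omega}(\baru)$, and since $\mu$ satisfies the non-extremality condition \eqref{eq:subcritical_cond} with a constant $L\in(0,1)$, Proposition \ref{prop:trunc-smooth-coarea} yields $\big|\int_{\Omega}u^{\lambda}\, d\mu\big|\le L\big(|Du|(\Omega)+m\big)$. Using $\sqrt{1+|Du|^{2}}(\Omega)\ge|Du|(\Omega)$ we get $\Phi(u,0)\ge(1-L)|Du|(\Omega)-Lm\ge-Lm$, and taking the infimum over $u\in\tV$ gives $\kappa(0)\ge-Lm>-\infty$. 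Hence $\kappa(0)$ is finite.

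\emph{Upper bound for $\kappa$ near $0$.} I would first note that $\sqrt{1+|D\baru-p|^{2}}(\Omega)\le\sqrt{1+|D\baru|^{2}}(\Omega)+|p|(\Omega)$ for every $p\in Y$. Indeed, for any $(g_{0},g)\in C^{0}_{c}(\Omega)\times C^{0}_{c}(\Omega;\R^{n})$ with $g_{0}^{2}+|g|^{2}\le1$ one has $\int_{\Omega}g_{0}\, dx+\int_{\Omega}g\cdot d(D\baru-p)=\big(\int_{\Omega}g_{0}\, dx+\int_{\Omega}g\cdot dD\baru\big)-\int_{\Omega}g\cdot dp\le\Gg(D\baru)+|p|(\Omega)$, because $|g|\le1$; taking the supremum over such $(g_{0},g)$ in the defining formula for $\Gg$ proves the bound. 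Testing $\kappa(p)$ with $u=\baru\in\tV$ then gives $\kappa(p)\le\Phi(\baru,p)=\sqrt{1+|D\baru-p|^{2}}(\Omega)+\int_{\Omega}\baru^{\lambda}\, d\mu\le c_{0}+|p|(\Omega)$, where $c_{0}:=\sqrt{1+|D\baru|^{2}}(\Omega)+\int_{\Omega}\baru^{\lambda}\, d\mu<+\infty$ by the previous step. In particular $\kappa\le c_{0}+1$ on $\{p:|p|(\Omega)\le1\}$, a neighbourhood of $0$ in $Y$. Since $\kappa$ is convex and $\kappa(0)$ is finite, the quoted convex-analysis fact gives continuity of $\kappa$ at $0$, which completes the argument.

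\emph{Expected difficulty.} No real obstacle is anticipated: the argument is short and rests on (i) the estimate already contained in Proposition \ref{prop:trunc-smooth-coarea}, (ii) the elementary subadditivity of the area functional under an additive perturbation by a vector measure, which follows at once from the dual representation of $\Gg$, and (iii) a standard continuity criterion for convex functions. The only point deserving attention is that $\baru\in\tV$, so that it is a legitimate competitor in the infimum defining $\kappa(p)$; this is immediate from \eqref{def:tV}.
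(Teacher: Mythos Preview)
Your argument is correct. The route differs from the paper's in two respects. For finiteness of $\kappa(0)$, the paper simply observes that $\kappa(0)$ equals the minimum value $\Jj_{\mu}[\baru]$, whereas you derive explicit upper and lower bounds using $\baru$ as competitor and the non-extremality estimate of Proposition~\ref{prop:trunc-smooth-coarea}; your version is a touch more self-contained. For continuity, the paper proves upper and lower semicontinuity separately and by hand: upper semicontinuity exactly as you do (testing with $\baru$ and the subadditivity $\Gg(D\baru-p)\le\Gg(D\baru)+|p|(\Omega)$), lower semicontinuity by picking near-minimizers $u_{k}$ for $\kappa(p_{k})$ and using the same subadditivity the other way to get $\kappa(p_{k})\ge\kappa(0)-|p_{k}|(\Omega)-1/k$. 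You replace the lower-semicontinuity step entirely by the abstract criterion that a convex function on a normed space which is finite at a point and bounded above on a neighbourhood is continuous there; since the paper already records that $\kappa$ is convex, this is legitimate and a bit cleaner. Both approaches rely on the same elementary estimate for $\Gg$, so neither is deeper than the other.
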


\begin{proof}
The first claim is obvious, as $\kappa(0) = \Jj_{\mu}[\baru]$. The lower semicontinuity at $0$ can be proved as follows. Since $Y$ is a normed space, we can consider a sequence $(p_{k})_{k}\subset Y$ such that $|p_{k}|(\Omega)\to 0$ as $k\to +\infty$. By definition, there exists a sequence $(u_{k})_{k}\subset V$ such that 
\[
\kappa(p_{k})\ge \sqrt{1+|Du_{k}-p_{k}|^{2}}(\Omega) + \int_\Omega u_{k}^{\lambda}\, d\mu - \frac{1}{k}\,.
\]
Therefore, if we set 
\begin{equation*}
\mathcal{C}(\Omega) = \left \{ (g_{0},g)\in C^{0}_{c}(\Omega)\times C^{0}_{c}(\Omega;\R^{n}) \text{ such that } g_{0}^{2} + |g|^{2}\le 1 \text{ on } \Omega \right \},
\end{equation*}
then we have
\begin{align*}
\kappa(p_{k}) &\ge \sup_{(g_{0},g) \in \mathcal{C}(\Omega)} \int_\Omega g_{0}\, dx + \int_\Omega g\cdot d D u_{k} - \int_\Omega g\cdot dp_{k} + \int_\Omega u_{k}^{\lambda}\, d\mu - \frac{1}{k}\\
&\ge \sup_{(g_{0},g) \in \mathcal{C}(\Omega)} \int_\Omega g_{0}\, dx + \int_\Omega g\cdot d D u_{k} + \int_\Omega u_{k}^{\lambda}\, d\mu - |p_{k}|(\Omega) - \frac{1}{k}\\
&= \sqrt{1+|Du_{k}|^{2}}(\Omega) + \int_\Omega u_{k}^{\lambda}\, d\mu -  |p_{k}|(\Omega) - \frac{1}{k}\\
&\ge \kappa(0) - |p_{k}|(\Omega) - \frac{1}{k}\,,
\end{align*}
which implies that $\displaystyle \liminf_{k \to + \infty} \kappa(p_{k}) \ge \kappa(0)$. Finally, the upper semicontinuity follows from
\[
\kappa(p) \le \sup_{(g_{0},g) \in \mathcal{C}(\Omega)} \int_\Omega g_{0}\, dx + \int_\Omega g\cdot d D \baru - \int_\Omega g\cdot dp + \int_\Omega \baru^\lambda\, d\mu \le \kappa(0) + |p|(\Omega)\,,
\]
and this proves the last claim.
\end{proof}

We can now state and prove the main result of this section. 

\begin{theorem} \label{thm:mininimizer_solution_implication_1}
Let $\mu \in \tMm(\Omega)$ and $\baru$ be a minimizer of $\Jj_{\mu}$ on $\cS(\phi)$. Then, there exists a vector field $T \in \DM^{\infty}(\Omega)$ such that \eqref{L_infty_bound_eq} and \eqref{divergence_eq} hold true, and the pair $(\baru,T)$ satisfies \eqref{pairing_eq} for $\lambda = \lambda_\mu$.
\end{theorem}
\begin{proof}
By combining Proposition \ref{prop:kappalsc} with Proposition \ref{prop:ET2.3} we infer that the primal problem $\cP$ is stable. Hence the dual problem $\cP^{*}$ admits a solution $\barp^{*}$, therefore we can apply Proposition \ref{prop:ET4.1} and deduce the necessary relations \eqref{eq:extremality1} and \eqref{eq:extremality2}. To write them explicitly in the present situation, we must compute $\Ff^{*}(D^{*}q^{*})$ and $\Gg^{*}(q^{*})$ for $q^{*}\in Y^{*}$. We conveniently consider a function $\psi \in W^{1,1}(\Omega)$ such that\footnote{For instance, we can choose $\psi$ as any Anzellotti-Giaquinta approximation of $\baru$. Thanks to Theorem \cite[Theorem 3.1]{ComiLeo}, we know that $\psi \in C^{\infty}(\Omega) \cap BV(\Omega)$ and $\Tr_{\de \Omega}(\psi) = \Tr_{\de \Omega}(\baru)$.} its trace on $\de \Omega$ coincides with the trace of $\baru$, and set $V_{0} = -\psi + \tV \subset BV_{0}(\Omega)$. We have
\begin{align*}
\Ff^{*}(D^{*}q^{*}) &= \sup_{u\in V} \langle Du,q^{*}\rangle - \Ff(u)\\
&= \langle D\psi,q^{*}\rangle - \int_\Omega \tilde \psi \, d\mu + \sup_{u\in V_{0}} \langle Du,q^{*}\rangle - \int_\Omega u^{\lambda}\, d\mu\\
&= \begin{cases}
\langle D\psi,q^{*}\rangle - \int_\Omega \tilde \psi \, d\mu & \text{if }\langle Du, q^{*}\rangle \le \int_\Omega u^{\lambda}\, d\mu\quad \text{for all }u\in V_{0}, \\[5pt]
+\infty & \text{otherwise.}
\end{cases}
\end{align*}
Therefore, in order to have $\Ff^{*}(D^{*}q^{*})$ finite, we must have 
\begin{equation} \label{ineq:div_distr}
\langle Du, q^{*}\rangle \le \int_\Omega u^{\lambda}\, d\mu
\end{equation}
for all $u \in V_0$.
Then we notice that, since $C^{\infty}_{c}(\Omega)\subset V_{0}$, \eqref{ineq:div_distr} holds in particular for all $u \in C^\infty_c(\Omega)$. However, for $u \in C(\Omega)$, we have $u^{\lambda} = u$, and, by considering both $u$ and $-u$ as test functions, \eqref{ineq:div_distr} yields
\begin{equation*}
\int_{\Omega} q^{*} \cdot \nabla u \, dx \le \int_\Omega u\, d\mu \ \text{ and } \ \int_{\Omega} q^{*} \cdot \nabla u \, dx \ge \int_\Omega u\, d\mu,
\end{equation*}
which readily implies $-\div q^{*}= \mu$ on $\Omega$. On the other hand,
\begin{align*}
\Gg^{*}(q^{*}) &= \sup_{q\in Y} \langle q, q^{*}\rangle - \sqrt{1+|q|^{2}}(\Omega)\\
&= \sup_{q\in Y} \langle q_{ac}, q^{*}\rangle - |\Leb{n}\times q_{ac}|(\Omega) + \langle q_{s}, q^{*}\rangle - |q_{s}|(\Omega)\,,
\end{align*}
where $q_{ac}$ and $q_{s}$ are, respectively, the absolutely continuous and the singular parts of the Radon measure $q$, and $\Leb{n}\times q_{ac}$ is the $\R^{n+1}$-valued measure with first component $\Leb{n}$ and other components $q_{ac}$. To proceed with our calculation, we set 
$$\|q^{*}\|_{\perp} := \sup \left \{\langle \nu, q^{*}\rangle : \nu \in Y^\perp, |\nu|(\Omega) \le 1 \right \},$$
where
$$Y^\perp := \left \{ \nu \in Y:  \nu \perp \Leb{n} \right \}.$$
Then, by  orthogonality of the measures $q_{ac}$ and $q_{s}$, we obtain
\begin{align*}
\Gg^{*}(q^{*}) &= \sup_{q\in Y} \langle q_{ac}, q^{*}\rangle - |\Leb{n}\times q_{ac}|(\Omega) + \langle q_{s}, q^{*}\rangle - |q_{s}|(\Omega)\\
&= \sup_{q\in L^{1}(\Omega;\R^{n})} \left(\int_\Omega q\cdot q^{*}\, dx - \int_{\Omega}\sqrt{1+|q|^{2}} \, dx \right) + \sup_{\nu \in Y^\perp} \langle \nu, q^{*}\rangle - |\nu|(\Omega)\\
&= 
\begin{cases}
-\displaystyle \int_{\Omega} \sqrt{1-|q^{*}|^{2}}\, dx & \text{if }\|q^{*}\|_{L^{\infty}(\Omega; \R^n)} \le 1 \text{ and }\|q^{*}\|_{\perp} \le 1,\\[5pt]
+\infty & \text{otherwise.}
\end{cases}
\end{align*}
Note that the last step above follows from a pointwise almost everywhere optimization of the quantity
\[
\rho |q^{*}(x)| - \sqrt{1+\rho^{2}}
\]
as $\rho$ varies in $[0,+\infty)$ (see computations in \cite[Chapter V, Lemma 1.1]{ekeland1999convex}). 

At this point, we can write the optimality relations for the dual pair $(\baru, \barp^{*})$. In particular, \eqref{eq:extremality1} becomes
\begin{equation}\label{eq:extremality1.1}
\begin{cases}
\displaystyle \int_\Omega (\baru^{\lambda}-\tilde \psi)\, d\mu + \langle \nabla \psi, \barp^{*}\rangle = \langle D\baru, \barp^{*}\rangle, & \\[5pt]
-\div \barp^{*} = \mu \ \text{ on } \Omega, & 
\end{cases}
\end{equation}
and \eqref{eq:extremality2} becomes
\begin{equation}\label{eq:extremality2.1}
\begin{cases}
\sqrt{1+|D\baru|^{2}}(\Omega) = \displaystyle \int_{\Omega}\sqrt{1-|\barp^{*}|^{2}} \,dx - \langle D\baru, \barp^{*}\rangle, & \\[5pt]
\|\barp^{*}\|_{L^\infty(\Omega; \R^n)} \le 1 \text{ and }\|\barp^{*}\|_{\perp} \le 1. & 
\end{cases}
\end{equation}
Let now $T = -\barp^{*}$. In particular we have $\|T\|_{L^{\infty}(\Omega; \R^n)}\le 1$, $\div T = \mu$ on $\Omega$, and by \eqref{eq:extremality1.1} we deduce
\begin{align*}
\langle D\baru, T\rangle &= \int_{\Omega} T\cdot \nabla \psi \, dx + \int_\Omega \tilde \psi \, d\mu - \int_\Omega \baru^{\lambda} \, d\mu\\
&= \int_{\Omega} d \div(\psi T) - \int_\Omega \baru^{\lambda}\, d\mu\\
&= \int_{\Omega} d \div(\baru T) - \int_\Omega \baru^{\lambda}\, d\mu - \int_{\Omega} d \div((\baru - \psi)T)\\
&= \int_{\Omega} d \div(\baru T) - \int_\Omega \baru^{\lambda}\, d\mu\\
&= (T, D\baru)_{\lambda}(\Omega)
\end{align*}
because $\baru - \psi$ has zero trace on $\de\Omega$, hence $\div((\baru-\psi)T)(\Omega) = 0$, thanks to the integration by parts formula \eqref{eq:GG_boundary_domain_u}, which we can apply since $\div T = \mu$ is an admissible measure, in the light of Remark \ref{rem:div_admissible_Leibniz}. By plugging this last identity into \eqref{eq:extremality2.1} we get
\begin{equation}\label{eq:optimalpairing}
(T, D\baru)_{\lambda}(\Omega) = \sqrt{1+|D\baru|^{2}} (\Omega) - \int_{\Omega}\sqrt{1-|T|^{2}}\, dx\,.
\end{equation}
On the other hand, by \eqref{eq:pairlambda-vs-area} in the case $a = 1$ we know that 
\begin{equation}\label{eq:pairinguppest}
| (T, D\baru)_{\lambda}| \le \sqrt{1+|D\baru|^{2}} - \sqrt{1-|T|^{2}}\, \Leb{n} \ \text{ on } \Omega.
\end{equation}
Finally, by combining \eqref{eq:optimalpairing} with \eqref{eq:pairinguppest} we obtain that
\[
 (T, D\baru)_{\lambda} = \sqrt{1+|D\baru|^{2}} - \sqrt{1-|T|^{2}}\, \Leb{n} \ \text{ on } \Omega.
\]
This proves that $\baru$ and $T$ verify \eqref{pairing_eq}, as wanted.
\end{proof}

We notice that, if for a given $u \in \cS(\phi)$ there exists $T \in \DM^{\infty}(\Omega)$ such that \eqref{L_infty_bound_eq},  \eqref{divergence_eq} and \eqref{pairing_eq} hold, then we can give an alternative representation of $\Jj_\mu[u]$ in terms of the trace of $u$ and of the interior normal trace of $T$ on $\partial \Omega$.

\begin{theorem}\label{thm:Jsol}
Let $\mu \in \MH(\Omega)$ be an admissible measure.
Let $u \in \cS(\phi)$ and $T \in \DM^{\infty}(\Omega)$ be such that \eqref{L_infty_bound_eq} and \eqref{divergence_eq} hold true. Then we have
\begin{align}
\Jj_\mu[u] & \ge \int_{B \setminus \Omega} \sqrt{1 + |\nabla \phi|^2} \, dx + \int_{\partial \Omega} \left ( |{\rm Tr}_{\partial \Omega}(u - \phi)| - {\rm Tr}_{\partial \Omega}(u) {\rm Tr}^i(T, \partial \Omega) \right ) \, d \Haus{n-1} \nonumber \\
& \quad + \int_{\Omega} (1 - \theta) \, d |(T, Du)_\lambda| + \int_{\Omega} \sqrt{1 - |T|^2} \, dx, \label{eq:lower_bound_T_sol}
\end{align}
where $\theta$ is a Borel function satisfying $(T, Du)_\lambda = \theta |(T, Du)_\lambda|$ on $\Omega$.
In addition, if the couple $(u, T)$ satisfies \eqref{pairing_eq}, then we get
\begin{equation} \label{eq:min_value}
\Jj_\mu[u] = \int_{B \setminus \Omega} \sqrt{1 + |\nabla \phi|^2} \, dx + \int_{\partial \Omega} \left ( |{\rm Tr}_{\partial \Omega}(u - \phi)| - {\rm Tr}_{\partial \Omega}(u) {\rm Tr}^i(T, \partial \Omega) \right ) \, d \Haus{n-1} + \int_{\Omega} \sqrt{1 - |T|^2} \, dx.
\end{equation}
\end{theorem}

\begin{proof}
Since $\div T = \mu$ and $|T(x)| \le 1$ for $\Leb{n}$-a.e. $x \in \Omega$, by Remark \ref{rem:div_admissible_Leibniz} we can apply \eqref{eq:Leibniz_lambda_gen}, \eqref{eq:pairlambda-vs-area} with $a =1$, and \eqref{eq:GG_boundary_domain_u} to obtain
\begin{align*}
\sqrt{1 + |Du|^2}(\Omega) + \int_{\Omega} u^\lambda \, d \div T & \ge |(T, Du)_{\lambda}|(\Omega) + \int_{\Omega} \sqrt{1 - |T|^2} \, dx + \div(uT)(\Omega) - (T, Du)_{\lambda}(\Omega) \\
& = \int_{\Omega} (1 - \theta) \, d |(T, Du)_\lambda| + \int_{\Omega} \sqrt{1 - |T|^2} \, dx \\
& \quad - \int_{\partial \Omega} {\rm Tr}_{\partial \Omega}(u) {\rm Tr}^i(T, \partial \Omega) \, d \Haus{n-1}.
\end{align*}
Hence, we get \eqref{eq:lower_bound_T_sol} by the definition of $\Jj_\mu$. Then, if $(u, T)$ satisfies \eqref{pairing_eq}, we argue similarly as above to get
\begin{align*}
\sqrt{1 + |Du|^2}(\Omega) + \int_{\Omega} u^\lambda \, d \div T & = (T, Du)_{\lambda}(\Omega) + \int_{\Omega} \sqrt{1 - |T|^2} \, dx + \div(uT)(\Omega) - (T, Du)_{\lambda}(\Omega) \\
& = \int_{\Omega} \sqrt{1 - |T|^2} \, dx - \int_{\partial \Omega} {\rm Tr}_{\partial \Omega}(u) {\rm Tr}^i(T, \partial \Omega) \, d \Haus{n-1}.
\end{align*}
This ends the proof.
\end{proof}

\medskip

\section{Variational approximation}\label{sec:gamma}
Here we establish a Gamma-convergence result for the sequence of functionals
\begin{equation}\label{eq:Jmuj}
\Jj_{\mu_{j}}[u] = \begin{cases}
\sqrt{1+|Du|^2}(B) + \ds\int_\Omega u(x)\, \mu_{j}(x)\, dx & \text{if }u\in \cS(\phi)\,,\\
+\infty & \text{otherwise,}
\end{cases}
\end{equation}
to the functional $\Jj_{\mu}$ defined in \eqref{eq:Jmu}. Here $\mu$ is an admissible non-extremal measure, that is, $|\mu| \in BV(\Omega)^{*}$ and it satisfies \eqref{eq:subcritical_cond}, and $\mu_{j}\in C^{\infty}(\Omega)$ is such that $\mu_{j}\, \Leb{n}$ is non-extremal for all $j$, and converges to $\mu$ in the weak--$\ast$ sense, as $j\to+\infty$.
\medskip

The first step consists in proving that any admissible non-extremal measure $\mu$ can be approximated in the weak--$\ast$ sense by a sequence of absolutely continuous measures $\mu_j \Leb{n}$, which are admissible and uniformly non-extremal, and such that the density functions $\mu_{j}$ are smooth. This result is an immediate consequence of \cite[Proposition 4.23]{ComiLeo}, and it exploits an approach analogous to the one of \cite{dal1999renormalized}.

Since the next proposition requires regularization via mollification, we preliminarily recall that a (symmetric) mollifier is a function $\rho \in C^{\infty}_{c}(B_{1})$, such that $\rho \ge 0$, $\rho(-x) = \rho(x)$ and $\displaystyle \int_{B_{1}} \rho \, dx = 1$. We also note that, whenever $\mu\in \Mm(\Omega)$ is such that $\mathrm{supp} (\mu) \Subset \Omega$, and $\delta>0$ is small enough, then for any function $u\in L^{1}(\Omega)$ we have 
\[
\int_{\Omega}u(\rho_{\delta}\ast \mu)\, dx = \int_{\Omega} (\rho_{\delta}\ast u) \, d\mu\,,
\]
where $\rho_{\delta}(x) = \delta^{-n}\rho(x/\delta)$.

\begin{proposition}\label{prop:muapprox}
Let $\mu\in \MH(\Omega)$ be an admissible non-extremal measure. Then there exists a sequence of functions $(\mu_{j})_{j} \subset C^{\infty}(\Omega)$ such that $\mu_j \, \Leb{n}$ is admissible and uniformly non-extremal for $j$ large enough, $\mu_j \, \Leb{n} \weakto \mu$ as $j \to + \infty$. Moreover, if $\mu = h\, \Leb{n} + \mu^{s}$ with $h\in L^{q}(\Omega)$, for some $q \in [1, + \infty]$, and $\mu^{s}$ has compact support in $\Omega$, then for any open set $\Omega'\Subset \Omega$ containing the support of $\mu^{s}$ we have the following properties:
\begin{itemize}
\item there exists $(h_{j})_j \subset C^{\infty}(\Omega)$ such that $h_{j}\to h$ in $L^{p}(\Omega)$, for all $p \in [1, q)$ and $p = q$ if $q < + \infty$, and $\mu_{j} - h_{j} \to 0$ in $L^n(\Omega\setminus \Omega')$;
\item $\mu_{j} = \rho_{j}\ast \mu^s + h_j$ on $\Omega'$ for $j$ large enough and $\rho_j(x) = \delta_{j}^{-n} \rho(x/\delta_j)$, for some sequence $\delta_j \downarrow 0$ and some fixed mollifier $\rho \in C^{\infty}_c(B_1)$.
\end{itemize}
\end{proposition}

\begin{proof}
Since $\mu$ is non-extremal, there exists $0 < L < 1$ such that $\mu \in \PB_L(\Omega)$. Hence, by \cite[Proposition 4.23]{ComiLeo}, there exists a sequence of functions $(\mu_{j})_{j} \subset C^{\infty}(\Omega)$ such that $\mu_j \Leb{n}$ is an admissible measure, it belongs to $\PB_{L(1 + 1/j)}(\Omega)$ for all $j \in \N$, and satisfies all the other properties. In particular, this means that $\mu_j \Leb{n}$ is non-extremal as long as $j > \frac{L}{1-L}$, and this ends the proof.
\end{proof}

The following theorem proves the Gamma-convergence of $\Jj_{\mu_{j}}$ to $\Jj_{\mu}$ under suitable assumptions on $\mu$ and $\mu_{j}$. For the essential definitions of Gamma-convergence, we refer to \cite{braides2002gamma}.

\begin{theorem}\label{thm:gammaconv}
Let $\mu\in \tMm(\Omega)$ be a non-extremal measure such that $\mu_2 = 0$ (see Definition \ref{def:tMomega}). Let $\mu_{j}$ be as in the thesis of Proposition \ref{prop:muapprox}, and let $\Jj_{\mu_{j}}$ and $\Jj_{\mu}$ be defined as in \eqref{eq:Jmuj} and \eqref{eq:Jmu}, respectively.  Then $\Jj_{\mu_{j}} \stackrel{\Gamma}{\longrightarrow} \Jj_{\mu}$ with respect to the $L^{1}$ topology, as $j\to+\infty$.
\end{theorem}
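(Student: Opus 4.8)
The plan is to establish the two defining inequalities of $\Gamma$-convergence in the $L^{1}$ topology: the $\Gamma$-$\liminf$ inequality $\liminf_{j}\Jj_{\mu_{j}}[u_{j}]\ge\Jj_{\mu}[u]$ whenever $u_{j}\to u$ in $L^{1}(\Omega)$, and the existence, for every $u$, of a recovery sequence $u_{j}\to u$ in $L^{1}(\Omega)$ with $\limsup_{j}\Jj_{\mu_{j}}[u_{j}]\le\Jj_{\mu}[u]$. The common starting point is the \emph{uniform} non-extremality furnished by Proposition~\ref{prop:muapprox}: there exists $\widetilde L\in(0,1)$ with $|\mu_{j}(E^{1})|\le\widetilde L\,\Per(E)$ for $j$ large and $|\mu(E^{1})|\le L\,\Per(E)\le\widetilde L\,\Per(E)$ for all measurable $E\subset\Omega$. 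Hence the coercivity estimate \eqref{eq:stimaJmu} holds with the \emph{same} constant $\widetilde L$ for $\Jj_{\mu}$ and for all $\Jj_{\mu_{j}}$; in particular every sequence with $\sup_{j}\Jj_{\mu_{j}}[u_{j}]<+\infty$ is bounded in $BV(B)$, and since $\cS(\phi)$ is closed in $L^{1}(B)$ and the total variation is $L^{1}$-lower semicontinuous, its $L^{1}$-limit belongs to $\cS(\phi)$.

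For the $\Gamma$-$\liminf$ inequality I would take $u_{j}\to u$ in $L^{1}(\Omega)$, assume $\liminf_{j}\Jj_{\mu_{j}}[u_{j}]<+\infty$ (otherwise there is nothing to prove), pass to a subsequence realizing this $\liminf$ with $\sup_{j}\Jj_{\mu_{j}}[u_{j}]<+\infty$, and conclude $u\in\cS(\phi)$ from the previous paragraph. The bridge between the varying $\mu_{j}$ and the fixed $\mu$ is the identity
\[
\int_{\Omega}u_{j}\,\mu_{j}\,dx=\int_{\Omega}\widehat u_{j}\,d\mu+O\!\Big(\tfrac{1}{j}\,\|u_{j}\|_{L^{\infty}(\Omega)}\Big),
\]
obtained from the explicit decomposition $\mu_{j}=\sum_{k}\rho_{j,k}\ast(\zeta_{j,k}\mu)+E_{j}$ with $\|E_{j}\|_{L^{1}(\Omega)}\le C/j$ (Proposition~\ref{prop:muapprox}) by transposing the mollifiers onto $u_{j}$, where $\widehat u_{j}=\sum_{k}(u_{j}\ast\rho_{j,k})\zeta_{j,k}$ is precisely the Anzellotti--Giaquinta regularization of $u_{j}$; note that $\widehat u_{j}$ is smooth on $\Omega$, converges to $u$ in $L^{1}(\Omega)$, and (being continuous on $\Omega$) satisfies $(\widehat u_{j})^{\lambda_{\mu}}=\widehat u_{j}$ $\mu$-a.e., so that, after gluing with $\phi$ on $B\setminus\Omega$, one has $\widehat u_{j}\in\cS(\phi)$ and $\int_{\Omega}\widehat u_{j}\,d\mu=\int_{\Omega}(\widehat u_{j})^{\lambda_{\mu}}\,d\mu$. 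Once the $u_{j}$ are made uniformly bounded by one-sided truncations near $\Gamma_{\pm}$ carried out, uniformly in $j$, exactly as in the proof of Theorem~\ref{thm:sci} (legitimate because Lemmas~\ref{lem:apriori1} and \ref{lem:apriori2} invoke non-extremality only through the common constant $\widetilde L<1$), the error term above is $o(1)$, and the identity rewrites $\Jj_{\mu_{j}}[u_{j}]=\Jj_{\mu}[\widehat u_{j}]+\big(\sqrt{1+|Du_{j}|^{2}}(B)-\sqrt{1+|D\widehat u_{j}|^{2}}(B)\big)+o(1)$; the remaining bracket is bounded below by $-o(1)$ by the refined Anzellotti--Giaquinta estimate of Theorem~\ref{thm:Anzellotti_Giaquinta} (the regularization does not increase the area by more than an infinitesimal amount). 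Feeding $\widehat u_{j}\to u$ into the lower semicontinuity of $\Jj_{\mu}$ in $L^{1}(\Omega)$ (Theorem~\ref{thm:sci}) then gives $\liminf_{j}\Jj_{\mu_{j}}[u_{j}]\ge\liminf_{j}\Jj_{\mu}[\widehat u_{j}]\ge\Jj_{\mu}[u]$.

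I expect the $\Gamma$-$\liminf$ inequality to be the main obstacle, because the two potential terms $\int_{\Omega}u^{\lambda_{\mu}}\,d\mu$ and $\int_{\Omega}u\,\mu_{j}\,dx$ are individually discontinuous along $L^{1}$-converging sequences, and a competitor sequence $u_{j}$ may concentrate larger and larger one-sided spikes on smaller and smaller portions of $\Gamma_{\pm}$. As in Theorem~\ref{thm:sci}, this is defeated by a one-sided truncation near $\Gamma_{\pm}$ whose energy cost is controlled \emph{uniformly in $j$} thanks to $\widetilde L<1$; the density estimate \eqref{eq:mudensityestimate} is then needed to guarantee the continuity of the upper/lower trace operators into $L^{1}(\Gamma;\Haus{n-1})$ (Lemma~\ref{lem:2a2b}), so that the traces of the \emph{varying} regularizations $\widehat u_{j}$ on $\Gamma$ can be controlled, and the singular part of the area term absorbs the loss incurred in passing from $u_{j}$ to $\widehat u_{j}$ on the jump set.

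For the $\Gamma$-$\limsup$ inequality I would use that the $\Gamma$-upper limit functional is always $L^{1}$-lower semicontinuous and that $\Jj_{\mu}$ is $L^{1}$-lower semicontinuous (Theorem~\ref{thm:sci}); it therefore suffices to produce recovery sequences on a subset of $\cS(\phi)$ dense in energy. By Theorem~\ref{thm:smooth_lambda_approx} applied on $\Omega$ with $\lambda=\lambda_{\mu}$ and glued with $\phi$ on $B\setminus\Omega$, every $u\in\cS(\phi)$ is the $L^{1}$-limit of functions $u^{(m)}\in\cS(\phi)$ smooth on $\Omega$ with $\Jj_{\mu}[u^{(m)}]\to\Jj_{\mu}[u]$ (the area term by $BV$-strict convergence on $\Omega$ and continuity of the trace on $\partial\Omega$; the potential term by the pointwise $\Haus{n-1}$-a.e.\ convergence of Theorem~\ref{thm:smooth_lambda_approx}(2) together with dominated convergence with respect to $|\mu|$, which is a bounded functional on $BV$ because $\mu$ is admissible). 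Finally, for $u\in\cS(\phi)$ smooth on $\Omega$ one has $u^{\lambda_{\mu}}=u$ $\mu$-a.e., and the \emph{constant} sequence $u_{j}\equiv u$ is a recovery sequence, since
\[
\Jj_{\mu_{j}}[u]-\Jj_{\mu}[u]=\int_{\Omega}u\,d\big(\mu_{j}\Leb{n}-\mu\big)\longrightarrow 0:
\]
the contribution of a fixed, compactly contained cut-off region vanishes by $\mu_{j}\Leb{n}\weakto\mu$ tested against $C^{\infty}_{c}(\Omega)$ (as established in Proposition~\ref{prop:muapprox}), while the remaining contribution is made arbitrarily small, uniformly in $j$, using the boundedness of $\|\mu_{j}\|_{L^{1}(\Omega)}$ and the fact that, by the explicit form of $\mu_{j}$, no mass of $\mu_{j}\Leb{n}$ escapes towards $\partial\Omega$. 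Together with the $\Gamma$-$\liminf$ inequality, this yields $\Jj_{\mu_{j}}\stackrel{\Gamma}{\longrightarrow}\Jj_{\mu}$ in the $L^{1}$ topology.
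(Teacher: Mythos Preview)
Your overall architecture matches the paper's: for the $\Gamma$-$\limsup$ you reduce to smooth $u\in\cS(\phi)$ by Theorem~\ref{thm:smooth_lambda_approx} and take the constant recovery sequence, splitting the potential term with a cut-off and using weak-$\ast$ convergence; for the $\Gamma$-$\liminf$ you transpose the mollification from $\mu_{j}$ onto $u_{j}$, produce an auxiliary sequence converging to $u$ in $L^{1}$, compare the area terms, and invoke the lower semicontinuity of $\Jj_{\mu}$ (Theorem~\ref{thm:sci}). This is exactly the paper's strategy.

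The difference lies in how the transposition error is controlled. The paper introduces $v_{j}=\eta(\rho_{j}\ast u_{j})+(1-\eta)u_{j}$ with a fixed cut-off $\eta\equiv 1$ on the support of $\mu$; by Proposition~\ref{prop:muapprox} one has $\mu_{j}=\rho_{j}\ast\mu$ on $\{\eta=1\}$ and $\mu_{j}\to 0$ \emph{uniformly} on $\{\eta<1\}$, so $\int_{\Omega}u_{j}\mu_{j}\,dx=\int_{\Omega}v_{j}\,d\mu+o(1)$ with error controlled by $\|u_{j}\|_{L^{1}}$ alone. The area comparison $\sqrt{1+|\nabla v_{j}|^{2}}(B)\le(1+\eps_{j})\sqrt{1+|\nabla u_{j}|^{2}}(B)+o(1)$ is then obtained by a direct test-function computation. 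No $L^{\infty}$ bound on $u_{j}$, and hence no truncation, is needed.

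Your route via the full Anzellotti--Giaquinta regularization $\widehat u_{j}$ incurs the error $O(\|u_{j}\|_{L^{\infty}}/j)$, forcing a truncation step, and here there is a genuine gap. You assert that Lemmas~\ref{lem:apriori1}--\ref{lem:apriori2} apply to $\Jj_{\mu_{j}}$ uniformly in $j$ ``because they invoke non-extremality only through $\widetilde L$''. But the clean estimate \eqref{eq:apriori3} additionally requires $(\mu_{j})_{+}(U)=0$ on the truncation region $U\supset\Gamma_{-}$; for the mollified measures this fails, since $\mu_{j}$ smears the absolutely continuous part $h\,\Leb{n}$ (which has no sign) across $U$. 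Without \eqref{eq:apriori3} you are left with \eqref{eq:stimatronca}, whose extra term $M\,\mu_{j}(\{v_{j}>M\}\cap U)$ is of order $M\cdot|\{v_{j}>M\}|^{1/q'}\sim M^{1/q}$ and diverges as $M\to\infty$. The repair is to split off the absolutely continuous part first (exactly as the proof of Theorem~\ref{thm:sci} does before restricting to the purely singular case) and exploit $q>n$ via H\"older; but this is not what ``exactly as in Theorem~\ref{thm:sci}'' delivers, and it must be argued separately. The paper's cut-off construction sidesteps the whole issue.
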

\begin{proof}
Let us start from the $\Gamma$-$\limsup$ inequality. Fix $u\in \cS(\phi)$, then using the density-in-energy of smooth functions provided by Lemma \ref{lem:density_energy} we can directly assume $u\in \cS(\phi)\cap C^{\infty}(\Omega)$. Then we fix a cut-off function $\eta\in C^{\infty}_{c}(\Omega)$ which is constantly equal to $1$ on some $\Omega'\Subset \Omega$ such that $\Gamma\Subset \Omega'$, and write
\begin{align}
\Jj_{\mu_{j}}[u] - \Jj_{\mu}[u] = \int_{\Omega} u \, \mu_{j} \, dx - \int_\Omega u \, d\mu = \int_{\Omega} \eta \, u \, \Big(\mu_{j} \, dx - d\mu\Big) + \int_{\Omega}(1-\eta) \, u\, \Big(\mu_{j}\, dx - d\mu\Big)\,. \label{eq:energy_difference_ref}
\end{align}
The first integral tends to zero as $j\to+\infty$ by weak--$\ast$ convergence. Since the singular part of $\mu$ is supported on $\Gamma$, the second integral can be estimated as
\begin{align*}
\left|\int_{\Omega}(1-\eta) \, u \, \Big(\mu_{j} \, dx - d\mu\Big)\right| \le \int_{\Omega\setminus \Omega'} |u|\, |\mu_{j} - h|\, dx \le \int_{\Omega\setminus \Omega'} |u|\, |\mu_{j} - h_{j}|\, dx  + \int_{\Omega\setminus \Omega'} |u|\, |h_{j} - h|\, dx\,,
\end{align*}
where $h_{j}$ is as in Proposition \ref{prop:muapprox}. Given that $\mu \in \tMm(\Omega)$ and $\mu_2 =0$, we know that $h \in L^q(\Omega)$ for some $q > n$, so that Proposition \ref{prop:muapprox} yields $h_{j}\to h$ in $L^{p}(\Omega)$ for some $p \in (n, + \infty)$. In addition, we have $\mu_{j} - h_{j}\to 0$ in $L^n(\Omega\setminus \Omega')$, by Proposition \ref{prop:muapprox}, and so we conclude that also the second integral in \eqref{eq:energy_difference_ref} is infinitesimal as $j\to+\infty$.
This proves that the difference of the energies tends to $0$ as $j\to+\infty$, which gives the $\Gamma$-$\limsup$ inequality at once.

Let us prove the $\Gamma$-$\liminf$ inequality. Fix $u_{j}, u\in \cS(\phi)$ such that 
\begin{equation} \label{eq:gamma_liminf_sequence}
u_{j}\to u \text{ in } L^{1}(\Omega) \text{ as } j \to + \infty \ \text{ and } \sup_{j \in \N} \Jj_{\mu_{j}}[u_{j}]<+\infty.
\end{equation} 
Using Lemma \ref{lem:density_energy} we can assume that $u_{j}\in C^{\infty}(\Omega)$ without loss of generality. With a slight abuse of notation, we denote by $\mu_{j}$ also the measure $\mu_{j}\, \Leb{n}$. The goal is to prove that
\begin{equation} \label{eq:lim_inf_prove}
\liminf_{j \to + \infty}\ \Jj_{\mu_{j}}[u_{j}] - \Jj_{\mu}[u] \ge 0\,.
\end{equation}
By Definition \ref{def:tMomega}, given that $\mu_2 = 0$, we know that the singular part of $\mu$ is concentrated on a compact set $\Gamma \subset \subset \Omega$ such that $|\Gamma| = 0$. For any open set $\Omega'$ such that $\Gamma \subset \Omega' \Subset \Omega$ and $j$ large enough, Proposition \ref{prop:muapprox} ensures that $\mu_j = \rho_j * \mu^s + h_j$ on $\Omega'$, for some mollifier $\rho_j(x) = \delta_{j}^{-n} \rho(x/\delta_j)$ with $\rho \in C^{\infty}_c(B_1)$ and $\delta_j \downarrow 0$. Therefore, we see that
\begin{align*}
\int_\Omega u_j \, \mu_j \, dx & = \int_{\Omega'} u_j \, h_j \, dx + \int_{\Omega'} u_j \, (\rho_j * \mu^s) \, dx + \int_{\Omega \setminus \Omega'} u_j \, \mu_j \, dx \\
& = \int_\Omega u_j \, h_j \, dx + \int_{\Omega \setminus \Omega'} u_j \, (\mu_j - h_j) \, dx + \int_{\Omega} u_j \, (\rho_j * \mu^s) \, dx
\end{align*}
for $j$ large enough such that $\Gamma + B_{\delta_j} \subset \Omega'$.
Given that the coercivity of $\Jj_{\mu}$ is independent of $\mu$ (cfr. Section \ref{sec:coercivity}), by \eqref{eq:gamma_liminf_sequence} we infer that $(u_j)_j$ is a bounded sequence in $BV(\Omega)$. Therefore, by interpolation, we obtain that $u_j \to u$ in $L^r(\Omega)$ for all $r \in \left [1, \frac{n}{n-1} \right )$ and that $(u_j)_j$ is bounded in $L^{\frac{n}{n-1}}(\Omega)$. As recalled above, by Proposition \ref{prop:muapprox} we know that $h_{j}\to h$ in $L^{p}(\Omega)$ for some $p \in (n, + \infty)$ and $\mu_{j} - h_{j}\to 0$ in $L^n(\Omega\setminus \Omega')$, hence we obtain
\begin{equation*}
\lim_{j \to + \infty} \int_\Omega u_j \, h_j \, dx + \int_{\Omega \setminus \Omega'} u_j \, (\mu_j - h_j) \, dx = \int_\Omega u \, h \, dx.
\end{equation*}
Thus, we have to deal only with the singular part of the measure $\mu$, therefore we shall assume that $h = 0$ for the remainder of the proof. 

Now, consider a sequence of functions $(\eta_j)_j$ such that 
\begin{equation} \label{eq:prop_eta_j}
\eta_j \in C^{\infty}_c(\Omega), \ 0 \le \eta_j \le 1, \ \eta_j = 1 \text{ on } \Gamma, \ {\rm supp}(\eta_j) \subset \Gamma + B_{2 \eps_j} \, \text{ and } \, \|\nabla \eta_j\|_{L^\infty(\Omega; \R^n)} \le \frac{1}{\eps_j}
\end{equation}
for some monotone decreasing sequence $(\eps_j)_j \subset (0, + \infty)$ such that $\eps_j \to 0$.
For instance, one can construct $(\eta_j)_j$ by taking the sequence of smooth approximations of the Lipschitz functions 
$$x \to \max\left \{ \left (1 - \frac{{\rm dist}(x, \Gamma)}{\eps_j} \right ), 0 \right \}.$$
Hence, we see that
\begin{align*}
\Jj_{\mu_{j}}[u_{j}] - \Jj_{\mu}[u] &= \sqrt{1+|\nabla u_{j}|^{2}}(B) - \sqrt{1+|D u|^{2}}(B) + \int_\Omega (\rho_{j} \ast u_{j} - u^{\lambda_{\mu}})\, d\mu\\
&=  \sqrt{1+|\nabla v_{j}|^{2}}(B) - \sqrt{1+|D u|^{2}}(B) + \int_\Omega (v_{j} - u^{\lambda_{\mu}})\, d\mu \\
&\qquad + \sqrt{1+|\nabla u_{j}|^{2}}(B) - \sqrt{1+|\nabla v_{j}|^{2}}(B) 
\end{align*}
where $v_{j} = \eta_j (\rho_{j}\ast u_{j}) + (1-\eta_j) u_{j}$. 
Thanks to Young's inequality, we get
\begin{align*}
 \|\rho_j * u_j - u_j\|_{L^1(\Omega)} & \le  \|\rho_j * u_j - \rho_j * u\|_{L^1(\Omega)} +  \|\rho_j * u - u\|_{L^1(\Omega)} +  \|u_j - u\|_{L^1(\Omega)} \\
 & \le 2\|u_j - u\|_{L^1(\Omega)} +  \|\rho_j * u - u\|_{L^1(\Omega)}  \to 0,
\end{align*}
which in turn implies that
\begin{align*}
\|v_j - u\|_{L^1(\Omega)} & \le \|\eta_j ( \rho_j \ast u_j - u) \|_{L^1(\Omega)} + \|(1 - \eta_j) (u_j - u)\|_{L^1(\Omega)} \\
& \le \| \rho_j \ast u_j - u_j \|_{L^1(\Omega)} + \|u_j - u\|_{L^1(\Omega)} \le 3\|u_j - u\|_{L^1(\Omega)} +  \|\rho_j * u - u\|_{L^1(\Omega)} \to 0,
\end{align*}
so that $v_j \to u$ in $L^1(\Omega)$. In addition, it is easy to notice that, for all $x \in B \setminus \Omega$, we have $v_j(x) = u_j(x) = \phi(x)$, so that $v_j \in \cS(\phi)$. Therefore, the lower-semicontinuity of $\Jj_{\mu}$ proved in Theorem \ref{thm:sci} implies that
\begin{equation} \label{eq:low_sem_inf}
\liminf_{j \to + \infty} \sqrt{1+|\nabla v_{j}|^{2}}(B) - \sqrt{1+|D u|^{2}}(B) + \int_\Omega (v_{j} - u^{\lambda_{\mu}})\, d\mu \ge 0\,.
\end{equation}
It remains to prove that 
\begin{equation} \label{eq:low_sem_2}
\liminf_{j \to + \infty} \sqrt{1+|\nabla u_{j}|^{2}}(B) - \sqrt{1+|\nabla v_{j}|^{2}}(B) \ge 0.
\end{equation}
To this purpose, we consider $(\psi, \varphi) \in C_c(B) \times C^1_c(B; \R^n)$ such that $\|(\psi, \varphi)\|_{L^{\infty}(B; \R^{n+1})} \le 1$, and we notice that
\begin{align*}
\int_{B} \psi + v_j \, \div \varphi \, dx & = \int_{B} \psi + \eta_j (\rho_j * u_j) \, \div \varphi + (1 - \eta_j) u_j \, \div \varphi \, dx \\
& = \int_{B} (1 - \eta_j) \psi + (\rho_j * u_j) \, \div (\eta_j \varphi) + u_j \, \div ( (1 - \eta_j) \varphi ) \, dx + \int_{\Omega} \eta_j \psi \, dx + \\
& + \int_{\Omega} (u_j - \rho_j * u_j) \, \varphi \cdot \nabla \eta_j \, dx \\
& = \int_{B} (1 - \eta_j) \psi + u_j \, \div ( \eta_j (\rho_j * \varphi) + (1 - \eta_j) \varphi ) \, dx + \int_{\Omega} \eta_j \psi \, dx +\\
& + \int_{\Omega} (u_j - \rho_j * u_j) \, \varphi \cdot \nabla \eta_j \, dx  + \int_{\Omega} u_j \, \div ( \rho_j * (\eta_j \varphi) - \eta_j (\rho_j * \varphi)) \, dx.
\end{align*}
To estimate the first term, we notice that 
\begin{equation*}
|((1 - \eta_j) \psi,  \eta_j (\rho_j * \varphi) + (1 - \eta_j) \varphi  )| \le \eta_j |\rho_j*\varphi| + (1-\eta_j) |(\psi, \varphi)| \le \|(\psi, \varphi)\|_{L^{\infty}(B; \R^{n+1})} \le 1.
\end{equation*}
Hence, 
\begin{equation*}
\int_{B} (1 - \eta_j) \psi + u_j \, \div ( \eta_j (\rho_j * \varphi) + (1 - \eta_j) \varphi ) \, dx \le \sqrt{1+|\nabla u_{j}|^{2}}(B).
\end{equation*}
It is then clear that the second term satisfies
\begin{equation*}
\left |\int_{\Omega} \eta_j \psi \, dx  \right | \le \|\psi\|_{L^\infty(\Omega)} |\Gamma + B_{2 \eps_j}| \le |\Gamma + B_{2 \eps_j}| \to 0 \ \text{ as } j \to + \infty.
\end{equation*}
Then, the third term is shown to be infinitesimal as follows:
\begin{align*}
\left | \int_{\Omega} (u_j - \rho_j * u_j) \, \varphi \cdot \nabla \eta_j \, dx \right |  & \le \|\nabla \eta_j\|_{L^{\infty}(\Omega; \R^n)} \|\varphi\|_{L^\infty(\Omega; \R^n)} \|\rho_j * u_j - u_j\|_{L^1(\Omega)} \\
& \le \frac{1}{\eps_j} \left ( 2\|u_j - u\|_{L^1(\Omega)} +  \|\rho_j * u - u\|_{L^1(\Omega)} \right ) \to 0\,,
\end{align*}
which is true as long as we choose $\eps_j$ in such a way that
\begin{equation*}
2\|u_j - u\|_{L^1(\Omega)} +  \|\rho_j * u - u\|_{L^1(\Omega)} = o(\eps_j) \ \text{ as } j \to + \infty.
\end{equation*}
Finally, we deal with the last, fourth term. As noted above, the assumption \eqref{eq:gamma_liminf_sequence} combined with the coercivity (see Section \ref{sec:coercivity}) implies that
\begin{equation*}
S := \sup_{j \in \N} \| \nabla u_j\|_{L^1(\Omega; \R^n)} < + \infty.
\end{equation*}
Therefore, we first integrate by parts
\begin{equation*}
\int_{\Omega} u_j \, \div ( \rho_j * (\eta_j \varphi) - \eta_j (\rho_j * \varphi)) \, dx = - \int_{\Omega} \nabla u_j  \cdot \left ( \rho_j * (\eta_j \varphi) - \eta_j (\rho_j * \varphi) \right ) \, dx,
\end{equation*}
and then we estimate the absolute value of this term by exploiting the Lipschitzianity of $\eta_j$ in the penultimate inequality:
\begin{align*}
\left | \int_{\Omega} \nabla u_j  \cdot \left ( \rho_j * (\eta_j \varphi) - \eta_j (\rho_j * \varphi) \right ) dx \right | & \le \| \nabla u_j\|_{L^1(\Omega; \R^n)} \sup_{x \in \Omega} \int_{B_{\delta_j}(x)} \rho_{j}(x-y) \left |\eta_j(y) - \eta_j(x) \right | |\varphi(y)| dy  \\
& \le S \|\varphi\|_{L^\infty(\Omega; \R^n)} \|\nabla \eta_j\|_{L^{\infty}(B_1; \R^n)} \delta_j \le S \frac{\delta_j}{\eps_j} \to 0\,,
\end{align*}
which holds as long as we choose $\eps_j$ in such a way that
\begin{equation*}
\delta_j = o(\eps_j) \ \text{ as } j \to + \infty.
\end{equation*}
Thus, to satisfy both conditions on $\eps_j$ we can for instance choose
$$\eps_{j} = \max \{ \sqrt{\delta_j}, \sqrt{2\|u_j - u\|_{L^1(\Omega)} +  \|\rho_j * u - u\|_{L^1(\Omega)}} \}.$$
All in all, by taking the supremum over 
$$(\psi, \varphi) \in C_c(B) \times C^1_c(B; \R^n) \ \text{ such that } \ \|(\psi, \varphi)\|_{L^{\infty}(B; \R^{n+1})} \le 1$$ 
we get
\begin{equation*} 
\sqrt{1 + |\nabla v_j|^2}(B) \le \sqrt{1+|\nabla u_{j}|^{2}}(B) + |\Gamma + B_{2 \eps_j}| + \sqrt{2\|u_j - u\|_{L^1(\Omega)} +  \|\rho_j * u - u\|_{L^1(\Omega)}} + S \sqrt{\delta_j},
\end{equation*}
from which \eqref{eq:low_sem_2} readily follows.
Thus, by combining the inequalities \eqref{eq:low_sem_inf} and \eqref{eq:low_sem_2} with the superadditivity of the liminf we obtain the proof of \eqref{eq:lim_inf_prove}, and thus of the theorem.
\end{proof}

We conclude this section with the $\Gamma$-compactness property, which stems from the uniform coercivity of the sequence of functionals $\Jj_{\mu_{j}}$ for $j$ large.
\begin{proposition}\label{prop:gammacompact}
Let $\mu,\mu_{j},\Jj_{\mu},\Jj_{\mu_{j}}$ be as in Theorem \ref{thm:gammaconv}. Assume that a sequence $(u_{j})_{j}\subset \cS(\phi)$ satisfies 
\[
\sup_{j\in \N} \Jj_{\mu_{j}}[u_{j}] < +\infty\,.
\]
Then $(u_{j})_{j}$ is relatively compact in $L^{1}(B)$.
\end{proposition}
\begin{proof}
It is a simple consequence of \eqref{eq:stimaJmu} written for $\mu_{j}$ in place of $\mu$, and coupled with the fact that $\mu_{j}$ is uniformly non-extremal thanks to Proposition \ref{prop:muapprox}.
\end{proof}

\section{Characterization of $T(u)$}\label{sec:further1}

In this section, we show that the vector field $T$ associated with a weak solution $u$ of \eqref{eq:PMCM} is uniquely determined up to Lebesgue null sets by
\begin{equation}\label{eq:Tuformula}
T = T(u) := \frac{\nabla u}{\sqrt{1 + |\nabla u|^2}} \ \Leb{n}\text{-a.e. in } \Omega\,,
\end{equation}
where $Du = \nabla u \Leb{n} + D^s u$ is the decomposition of the gradient measure of $u$. 

\begin{theorem} \label{thm:Tuformula}
Let $\mu \in \MH(\Omega)$ be an admissible measure and let $\lambda : \Omega \to [0, 1]$ be a Borel function. Assume that there exist $u \in BV(\Omega)$ and $T \in \DM^{\infty}(\Omega)$ such that \eqref{L_infty_bound_eq}, \eqref{divergence_eq} and \eqref{pairing_eq} hold true. 
Then $T$ is unique and satisfies \eqref{eq:Tuformula}.
\end{theorem}

\begin{proof}
We notice that \eqref{pairing_eq} means that
\begin{equation*}
(T, Du)_{\lambda} = \sqrt{1 + |Du|^2} - \sqrt{1 - |T|^2} \Leb{n}  \ \text{ on } \Omega.
\end{equation*}
Thanks to Lemma \ref{lem:2a2b-bis}, we know that $u^\lambda \in L^1(\Omega; |\mu|)$. Since $\mu = \div T$, we can apply \cite[Proposition 4.7]{crasta2019pairings} to conclude that 
\begin{equation*}
(T, Du)_{\lambda} = (T \cdot \nabla u) \Leb{n} + (T, D u)_{\lambda}^s  \ \text{ on } \Omega\,.
\end{equation*}
Hence, if we combine these two equations and consider only the absolutely continuous parts of the measures involved, we get
\begin{equation} \label{eq:T_u_pair_explicit_Leb}
T \cdot \nabla u = \sqrt{1 + |\nabla u|^2} - \sqrt{1 - |T|^2} \ \Leb{n}\text{-a.e. in } \Omega.
\end{equation}
It is then clear that $$T(x) \cdot \nabla u(x) \ge 0 \text{ for } \Leb{n}\text{-a.e. } x \in \Omega,$$
so that there exist a scalar function $m \ge 0$ and a vector-valued function $V$ orthogonal to such $\nabla u$ that
$$T(x) = m(x) \frac{\nabla u(x)}{|\nabla u(x)|} + V(x)$$
for $\Leb{n}$-a.e. $x \in \Omega$ such that $\nabla u(x) \neq 0$. Let us fix one such $x \in \Omega$. From \eqref{eq:T_u_pair_explicit_Leb} we get
\begin{equation*}
m(x) |\nabla u(x)| = \sqrt{1 + |\nabla u(x)|^2} - \sqrt{1 - m^2(x)-|V(x)|^{2}}.
\end{equation*}
which, by bringing all the terms involving $|\nabla u|$ on the same side and taking a square power, gives us
\begin{equation*}
m(x)^2 |\nabla u(x)|^2 + |\nabla u(x)|^2 + 1 - 2m(x) |\nabla u(x)| \sqrt{1 + |\nabla u(x)|^2} = 1 - m^2(x) - |V(x)|^2.
\end{equation*}
This last equation easily implies 
\begin{equation*}
(m(x) \sqrt{1+|\nabla u(x)|^2} - |\nabla u(x)|)^2 = - |V(x)|^2,
\end{equation*}
from which we deduce that
\begin{equation*}
V(x) = 0 \ \text{ and } \ m(x) = \frac{|\nabla u(x)|}{\sqrt{1 + |\nabla u(x)|^2}} \text{ for } \Leb{n}\text{-a.e. } x \in \Omega \cap \{ \nabla u \neq 0\}.
\end{equation*}
This proves \eqref{eq:Tuformula} in the case $\nabla u(x) \neq 0$; if instead $\nabla u(x) = 0$, then \eqref{eq:T_u_pair_explicit_Leb} reduces to
$$0 = 1 - \sqrt{1 - |T(x)|^2},$$
which clearly implies $T(x) = 0$, again proving \eqref{eq:Tuformula} in such case.
\end{proof}

\begin{remark}
In the light of \eqref{eq:Tuformula}, we can refine the statement of Theorem \ref{thm:Jsol} by replacing $\sqrt{1-|T|^2}$ with $\frac{1}{\sqrt{1+|\nabla u|^2}}$. 
In particular, \eqref{eq:min_value} becomes
\begin{equation*}
\Jj_\mu[u] = \int_{B \setminus \Omega} \sqrt{1 + |\nabla \phi|^2} \, dx + \int_{\partial \Omega} \left ( |{\rm Tr}_{\partial \Omega}(u - \phi)| - {\rm Tr}_{\partial \Omega}(u) {\rm Tr}^i(T, \partial \Omega) \right ) \, d \Haus{n-1} + \int_{\Omega} \frac{1}{\sqrt{1 + |\nabla u|^2}} \, dx.
\end{equation*}
\end{remark}

\begin{remark} \label{rem:pairing_decomp_singular}
Under the same assumptions of Theorem \ref{thm:Tuformula}, we see that, if $\nabla u$ is constant $\Leb{n}$-a.e. on $\Omega$, then \eqref{eq:Tuformula} implies that $T = T(u)$ is constant $\Leb{n}$-a.e. on $\Omega$, and therefore $\mu = 0$. Consequently, if the admissible measure $\mu$ is nontrivial, then the absolutely continuous part $\nabla u$ of the distributional gradient $Du$ of a weak solution to \eqref{eq:PMCM} cannot be essentially constant on $\Omega$\footnote{Roughly speaking, the non triviality of the curvature implies the non-constancy of the absolutely continuous part of the gradient of $u$.}.  
Another relevant consequence of \eqref{eq:Tuformula} is that 
\begin{equation} \label{eq:T_1_negligible_level}
\big| \{ x \in \Omega : |T(x)| = 1\}\big| = 0,
\end{equation} 
due to the fact that $\nabla u \in L^1(\Omega; \R^n)$. Hence, we can invert \eqref{eq:Tuformula}, obtaining
\begin{equation} \label{eq:T_nabla_uformula}
\nabla u(x) = \frac{T(x)}{\sqrt{1 - |T(x)|^{2}}} \ \text{ for } \Leb{n}\text{-a.e. } x \in \Omega.
\end{equation}

In addition, if we consider instead the singular parts of the measures involved in \eqref{pairing_eq}, by \cite[Proposition 4.7]{crasta2019pairings} we get
\begin{equation} \label{eq:T_u_pair_explicit}
(T, D u)_{\lambda}^s = |D^s u| \text{ on } \Omega.
\end{equation}
Moreover, thanks to the decomposition of the singular part of $Du$, $$D^s u = D^c u + D^j u,$$ and thanks to \cite[Proposition 4.7]{crasta2019pairings}, we have
\begin{equation*}
(T, D u)_{\lambda}^s = (T, D u)_{*}^c + (T, D u)_{\lambda}^j \ \text{ on } \Omega,
\end{equation*}
so that \eqref{eq:T_u_pair_explicit} can be rewritten as
\begin{equation} \label{eq:pairingextreme}
\begin{cases}
(T, D u)_{*}^c = |D^c u| & \text{ on } \Omega. \\
(T, D u)_{\lambda}^j = |D^j u| & \text{ on } \Omega.
\end{cases}
\end{equation}
In addition, still by \cite[Proposition 4.7]{crasta2019pairings} we have
\begin{equation} \label{eq:1dpairingj1}
(T,Du)_{\lambda}^{j} = [\lambda {\rm Tr}^e(T,J_{u}) + (1 - \lambda) {\rm Tr}^i(T,J_{u})]\, |D^{j}u|,
\end{equation}
where ${\rm Tr}^{e}(T,J_{u})$ and ${\rm Tr}^{i}(T,J_{u})$ denote the normal traces of $T$ on the jump set of $u$ corresponding to the orientation given by $\nu_u$ (for their precise definition we refer the reader to \cite[Section 2.4]{crasta2019pairings}). In particular, the second equation in \eqref{eq:pairingextreme} combined with \eqref{eq:1dpairingj1} implies that
\begin{equation} \label{eq:convex_comb_eq_1_traces}
\lambda {\rm Tr}^e(T,J_{u}) + (1-\lambda) {\rm Tr}^i(T,J_{u}) = 1 \quad |D^j u|\text{-a.e.}
\end{equation}
Then, we notice that \eqref{eq:pairing_estimate_lambda} implies an analogous bound for the singular part of $D^s u$, simply by restricting it on any Lebesgue negligible set: hence, we have
\begin{equation} \label{eq:pairing_estimate_lambda_s}
|(T, D u)_{\lambda}^s| \le \|T\|_{L^\infty(\Omega; \R^n)} |D^s u| \ \text{ on } \Omega,
\end{equation}
so that \eqref{eq:T_u_pair_explicit} implies that $D^s u \neq 0$ if and only if $\|T\|_{L^\infty(\Omega; \R^n)} = 1$.
Moreover, we notice that \eqref{eq:pairing_estimate_lambda} and therefore also \eqref{eq:pairing_estimate_lambda_s} can be localized to any open set $\Omega' \subset \Omega$, see \cite[Proposition 3.4]{comi2022representation}.
Hence, for all $x \in {\rm supp}(|D^su|)$, we can exploit \eqref{eq:pairing_estimate_lambda_s} localized to any ball $B_r(x) \subset \Omega$ together with \eqref{eq:T_u_pair_explicit} in order to get
\begin{equation*}
0 < |D^s u|(B_r(x)) \le \|T\|_{L^\infty(B_r(x); \R^n)} |D^s u|(B_r(x)) \ \text{ for all } r > 0 \text{ such that } B_r(x) \subset \Omega,
\end{equation*}
which, by \eqref{L_infty_bound_eq}, implies $\|T\|_{L^\infty(B_r(x); \R^n)} = 1$ for all such $r > 0$, and so
\begin{equation*}
\lim_{r \to 0^+} \|T\|_{L^\infty(B_r(x); \R^n)} = 1 \ \text{ for all } x \in {\rm supp}(|D^su|).
\end{equation*}
\end{remark}

Under the assumption of Theorem \ref{thm:Tuformula}, it is clear that the absolutely continuous part of the measure $Du$ satisfies
\begin{equation*}
|\nabla u| = \frac{|T|}{\sqrt{1 - |T|^2}} \ \Leb{n}\text{-a.e. in } \Omega\,,
\end{equation*}
thanks to \eqref{eq:Tuformula} and \eqref{eq:T_1_negligible_level}. This implies that $\nabla u \in L^{\infty}(\Omega; \R^n)$ as long as $$|T(x)| \le L < 1 \text{ for } \Leb{n}\text{-a.e. } x \in \Omega.$$
However, this does not always hold, even if $\div T$ is a non-extremal measure $\mu$. 
Indeed, due to already established results recalled in Lemma \ref{lem:existence_optimal_T}, we know that, if $\Omega$ is a bounded open set with Lipschitz boundary and $\mu \in \Mm(\Omega)$ satisfies \eqref{eq:subcritical_cond} for some $L \in (0,1)$ and is admissible, there exists some $F \in \DM^\infty(\Omega)$ such that $\div F = \mu$ on $\Omega$ and $\|F\|_{L^\infty(\Omega; \R^n)} \le L$, but the latter condition is not necessarily satisfied by any solution to the equation $\div G = \mu$. 
On the other hand, one could wonder if indeed $u$ is Lipschitz continuous under the assumption $\|T\|_{L^\infty(\Omega; \R^n)} \le L < 1$: this is proved to be true in the following result.

\begin{proposition} \label{prop:bound_T_u_Lip}
Let $\mu \in \MH(\Omega)$ be an admissible measure and let $\lambda : \Omega \to [0, 1]$ be a Borel function. Assume that there exist $u \in BV(\Omega)$ and $T \in \DM^{\infty}(\Omega)$ such that \eqref{L_infty_bound_eq}, \eqref{divergence_eq} and \eqref{pairing_eq} hold true. Then for every $L \in (0, 1)$ and every open set $V \subset \Omega$ such that $|V \setminus \{ |T| \le L \}| = 0$ we have $u \in W^{1,1}(V)$ with $\nabla u \in L^\infty(V; \R^n)$. In particular, if $\|T\|_{L^\infty(\Omega; \R^n)} \le L < 1$, then $u \in \Lip(\Omega)$.
\end{proposition}

\begin{proof}
Clearly, the couple $(u, T)$ satisfies \eqref{pairing_eq} on $V$, and $\|T\|_{L^{\infty}(V; \R^n)} \le L$. Hence, by \eqref{eq:pairlambda-vs-area} we get
\begin{equation*}
\sqrt{1+|Du|^2} - \sqrt{1 - |T|^2} \Leb{n} = (T, Du)_\lambda \le L \sqrt{1 + |Du|^2} - \sqrt{L^2 - |T|^2} \Leb{n} \quad \text{ on } V.
\end{equation*}
Hence, we get
\begin{equation*}
\sqrt{1+|Du|^2} \le \frac{\sqrt{1 - |T|^2} - \sqrt{L^2 - |T|^2}}{1 - L} \Leb{n} \quad \text{ on } V,
\end{equation*}
which immediately implies $|Du| \ll \Leb{n}$ on $V$, and that the weak gradient of $u$ satisfies
\begin{equation*}
|\nabla u| \le \frac{\sqrt{1 - |T|^2} - \sqrt{L^2 - |T|^2}}{1 - L} \quad \Leb{n}\text{-a.e. on } V.
\end{equation*}
Finally, if $V = \Omega$, the conclusion follows from the fact that $\Omega$ is a bounded open set with a Lipschitz boundary.
\end{proof}

An interesting consequence of this result is that, if $u$ is a solution to the (PMCM) equation such that the density of the absolutely continuous part of $Du$ is essentially bounded, then $u$ is Lipschitz continuous. Roughly speaking, this means that the singular part of $Du$ is concentrated on a Borel subset of the set $\{ |\nabla u| = + \infty\}$ or, equivalently, of the set $\{ |T(u)| = 1\}$, coherently with Remark \ref{rem:pairing_decomp_singular}.

\begin{corollary}
Let $\mu \in \MH(\Omega)$ be an admissible measure and let $\lambda : \Omega \to [0, 1]$ be a Borel function. Assume that there exist $u \in BV(\Omega)$ and $T \in \DM^{\infty}(\Omega)$ such that \eqref{L_infty_bound_eq}, \eqref{divergence_eq} and \eqref{pairing_eq} hold true. If $\nabla u \in L^{\infty}(\Omega; \R^n)$, then $u \in \Lip(\Omega)$.
\end{corollary}
\begin{proof}
Thanks to \eqref{eq:Tuformula}, we get
\begin{equation*}
\|T\|_{L^\infty(\Omega; \R^n)} \le \frac{\|\nabla u\|_{L^\infty(\Omega; \R^n)}}{\sqrt{1 + \|\nabla u\|_{L^\infty(\Omega; \R^n)}^2}} < 1,
\end{equation*}
since the function $[0, + \infty) \ni \xi \to \frac{\xi}{\sqrt{1 + \xi^2}}$ is increasing. Hence, Proposition \ref{prop:bound_T_u_Lip} yields $u \in \Lip(\Omega)$.
\end{proof}


\section{Maximum principle for continuous solutions}
Next, we prove a maximum principle in the particular case of solutions to the PMCM equation which are continuous inside the domain $\Omega$. A restriction of this kind is indeed necessary, as the example of non-uniqueness constructed in Section \ref{sec:example} shows. Instead, the continuity assumption on $u$ implies that
$$(T, Du)_{\lambda} = (T, Du)_* = \div(u T) - u \div T$$
for any choice of the Borel map $\lambda$, see \cite[Remark 4.10]{crasta2019pairings}. We stress that this maximum principle holds without a-priori restrictions on the admissible measure $\mu$. 

\begin{theorem}\label{thm:maxprinc}
Let $\mu_1, \mu_2 \in \MH(\Omega)$ be admissible measures such that $\mu_1 \le \mu_2$. Let $u_i \in BV(\Omega) \cap C^0(\Omega)$ and $T_i \in \DM^{\infty}(\Omega)$ such that \eqref{L_infty_bound_eq}, \eqref{divergence_eq} and \eqref{pairing_eq} are satisfied for $i =1, 2$. Assume 
that ${\rm Tr}_{\de\Omega}(u_1) \ge  {\rm Tr}_{\de\Omega}(u_2)$ $\Haus{n-1}$-a.e. on $\partial \Omega$, then we have $u_1 \ge u_2$ in $\Omega$.
\end{theorem}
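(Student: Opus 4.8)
The plan is to prove that the open set $E := \{x \in \Omega : u_2(x) > u_1(x)\}$ (open because $u_1,u_2$ are continuous) is empty; this is the classical comparison argument for the minimal surface operator, made rigorous via the $\lambda$-pairing calculus. First I would fix $k > 0$ and set $w_k := \min\{(u_2 - u_1)_+, k\} \in BV(\Omega) \cap C^0(\Omega) \cap L^\infty(\Omega)$. Since composition with a Lipschitz function commutes with traces and $\mathrm{Tr}_{\partial\Omega}(u_1) \ge \mathrm{Tr}_{\partial\Omega}(u_2)$, we have $\mathrm{Tr}_{\partial\Omega}(w_k) = 0$; moreover $w_k$ vanishes on $\Omega \setminus E$ and, by continuity, on $\partial E \cap \Omega$. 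Because $w_k$ is continuous, $(T_i, Dw_k)_\lambda$ is independent of $\lambda$ and equals $\div(w_k T_i) - w_k \mu_i$ by the Leibniz formula of Theorem \ref{lemma:pairlambda-vs-area} (applicable since each $\mu_i$ is admissible). A standard cut-off argument exploiting the $L^1$-decay near $\partial\Omega$ of a zero-trace function gives $\div(w_k T_i)(\Omega) = 0$, hence $(T_i, Dw_k)(\Omega) = -\int_\Omega w_k \, d\mu_i$. Subtracting the two identities and using $w_k \ge 0$, $\mu_1 \le \mu_2$, we obtain
\[
(T_2 - T_1, Dw_k)(\Omega) = -\int_\Omega w_k \, d(\mu_2 - \mu_1) \le 0 .
\]

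Next I would extract the reverse inequality from the strict convexity of $\xi \mapsto \sqrt{1+|\xi|^2}$. By the chain rule for $BV$ functions (here using that $u_1,u_2$ are continuous, so no jump part is created), $Dw_k = \chi_{\{0 < u_2 - u_1 < k\}} D(u_2 - u_1)$, a measure concentrated on the open set $E_k := E \cap \{u_2 - u_1 < k\}$ that charges neither $\partial E_k$ nor $\Omega \setminus \overline{E_k}$; since $w_k \equiv u_2 - u_1$ on $E_k$, locality of the pairing yields $(T_2 - T_1, Dw_k)(\Omega) = (T_2 - T_1, D(u_2 - u_1))(E_k)$. Expanding by bilinearity, inserting \eqref{pairing_eq} for $u_1$ and $u_2$, and bounding $(T_2, Du_1)$, $(T_1, Du_2)$ by \eqref{eq:pairlambda-vs-area}, one gets
\[
(T_2 - T_1, D(u_2 - u_1))\restrict E_k = \sigma_1 + \sigma_2 ,
\]
where $\sigma_1 := \big(\sqrt{1+|Du_1|^2} - \sqrt{1-|T_2|^2}\,\Leb{n} - (T_2, Du_1)\big)\restrict E_k$ and $\sigma_2 := \big(\sqrt{1+|Du_2|^2} - \sqrt{1-|T_1|^2}\,\Leb{n} - (T_1, Du_2)\big)\restrict E_k$ are nonnegative measures. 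Comparison with the previous step forces $\sigma_1(E_k) = \sigma_2(E_k) = 0$, hence $\sigma_1 = \sigma_2 = 0$; letting $k \to +\infty$ (so that $E_k \nearrow E$) we conclude that, on all of $E$,
\[
(T_2, Du_1) = \sqrt{1+|Du_1|^2} - \sqrt{1-|T_2|^2}\,\Leb{n}, \qquad (T_1, Du_2) = \sqrt{1+|Du_2|^2} - \sqrt{1-|T_1|^2}\,\Leb{n} .
\]

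From this rigidity I would conclude. Looking at the absolutely continuous parts and using that $\tfrac{\nabla u_1}{\sqrt{1+|\nabla u_1|^2}}$ is the unique sub-unitary field satisfying the pointwise identity $T \cdot \nabla u_1 = \sqrt{1+|\nabla u_1|^2} - \sqrt{1-|T|^2}$ (together with \eqref{eq:Tuformula} applied to $u_2$), we get $T_1 = T_2 = \tfrac{\nabla u_1}{\sqrt{1+|\nabla u_1|^2}} = \tfrac{\nabla u_2}{\sqrt{1+|\nabla u_2|^2}}$, hence $\nabla u_1 = \nabla u_2$ $\Leb{n}$-a.e. on $E$ and therefore $\mu_1\restrict E = \mu_2\restrict E$. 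Comparing the singular parts (recall $D^s u_i = D^c u_i$ since $u_i$ is continuous) gives $(T_2, D^c u_1) = |D^c u_1|$ and $(T_1, D^c u_2) = |D^c u_2|$ on $E$, and the equality cases of \eqref{eq:pairlambda-vs-area} then force $D^c u_1 = D^c u_2$ on $E$. Thus $D(u_2 - u_1)\restrict E = 0$, i.e. $w := (u_2 - u_1)_+$ is constant on each connected component of $E$, with strictly positive value; but $w = 0$ on $\partial E$ (by continuity at $\partial E \cap \Omega$, and by the zero trace on $\partial\Omega$), which, $w$ being continuous and each component of $E$ being open, is impossible. Hence $E = \emptyset$, i.e. $u_1 \ge u_2$ on $\Omega$.

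The main obstacle is precisely this last rigidity step: upgrading the $\Leb{n}$-a.e. identities to $D(u_2 - u_1)\restrict E = 0$ \emph{including the Cantor part}, i.e. deducing $D^c u_1 = D^c u_2$ on $E$ from the saturated pairings $(T_2, D^c u_1) = |D^c u_1|$ and $(T_1, D^c u_2) = |D^c u_2|$. This requires a careful analysis of the equality cases in \eqref{eq:pairlambda-vs-area}, which I would carry out through the weak convergence $(F \cdot \nabla u^\lambda_k)\Leb{n} \weakto (F, Du)_\lambda$ in Theorem \ref{lemma:pairlambda-vs-area} and the strict concavity of $v \mapsto \sqrt{1-|v|^2}$, in the spirit of the uniqueness argument of Theorem \ref{thm:Tuniqueness}; when $u_1, u_2 \in W^{1,1}(\Omega)$ this step is automatic. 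The remaining technical points --- the vanishing $\div(w_k T_i)(\Omega) = 0$ for zero-trace $w_k$, and the chain-rule identity for $Dw_k$ together with the fact that $Dw_k$ puts no mass on $\partial E_k$ --- are routine on the Lipschitz domain $\Omega$.
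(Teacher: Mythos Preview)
Your overall strategy matches the paper's: test against a bounded, zero-trace truncation of $(u_2-u_1)_+$, integrate by parts, use $\mu_1\le\mu_2$ for one sign and the bilinearity/\eqref{eq:pairlambda-vs-area} expansion for the other, and conclude that both cross-pairings are saturated on $E=\{u_2>u_1\}$. The only cosmetic differences are that the paper uses $\varphi_\eps=\max\{u_2-u_1,\eps\}$ and an interior approximation $\Omega_k\nearrow\Omega$ to handle the boundary term, whereas you truncate from above and invoke $\div(w_kT_i)(\Omega)=0$ directly; both routes are fine on a Lipschitz $\Omega$.

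The real gap is exactly where you flag it: deducing $D^c u_1=D^c u_2$ on $E$ from the saturated identities. Your sketch does not close this. Saturation $(T_2,D^cu_1)=|D^cu_1|$ and $(T_1,D^cu_2)=|D^cu_2|$ only constrains the polar directions of $D^cu_1,D^cu_2$ relative to (normal traces of) $T_1,T_2$; it does not compare the two Cantor measures to each other. The fact that $T_1=T_2$ $\Leb{n}$-a.e.\ on $E$ does not help, since Cantor parts live on $\Leb{n}$-null sets, and the argument of Theorem~\ref{thm:Tuniqueness} gives uniqueness of $T$ for a \emph{fixed} $u$, not a comparison between $Du_1$ and $Du_2$.

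The paper bypasses this with a short convexity trick you are missing. Once both cross-pairings are saturated on $E$, add the saturated identity for $(T_1,Du_2)$ to \eqref{pairing_eq} for $(T_1,Du_1)$ to obtain
\[
\sqrt{1+|Du_1|^2}+\sqrt{1+|Du_2|^2}-(T_1,D(u_1+u_2))-2\sqrt{1-|T_1|^2}\,\Leb{n}=0\quad\text{on }E,
\]
and then apply \eqref{eq:pairlambda-vs-area} to $\tfrac{u_1+u_2}{2}$:
\[
\tfrac12(T_1,D(u_1+u_2))\le\sqrt{1+\big|D\tfrac{u_1+u_2}{2}\big|^2}-\sqrt{1-|T_1|^2}\,\Leb{n}.
\]
Combining the two gives the reverse convexity inequality
\[
\tfrac12\sqrt{1+|Du_1|^2}+\tfrac12\sqrt{1+|Du_2|^2}\le\sqrt{1+\big|D\tfrac{u_1+u_2}{2}\big|^2}\quad\text{on }E,
\]
which by strict convexity of the area density (as a functional on measures) forces $Du_1=Du_2$ on $E$ in one stroke, Cantor part included. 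From there $D\varphi_0=0$ on all of $\Omega$ and the zero trace finish the proof, exactly as in your final paragraph.
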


\begin{proof}
By the assumptions on $u_i$ and $T_i$, we have
\begin{equation} \label{divergence_eq2_1_2}
(T_i, Du_i)_\lambda = \sqrt{1 + |D u_i|^{2}} - \sqrt{1 - |T_i|^{2}} \Leb{n} \ \text{ on } \Omega
\end{equation}
for $i = 1, 2$ and $\lambda = \lambda_{\mu}$.
Let $\eps > 0$, $A_\eps := \{ x \in \Omega : u_2(x) - u_1(x) > \eps \}$ and $\varphi_\eps := \max\{ u_2 - u_1, \eps \}$. It is clear that $\varphi_\eps \in BV(\Omega) \cap C^0(\Omega)$ with
\begin{equation*}
D \varphi_\eps = \begin{cases} D u_2 - Du_1 & \text{ on } A_\eps, \\
0 & \text{ on } \Omega \setminus A_\eps, \end{cases}
\end{equation*}
and that the open sets $A_\eps$ have finite perimeter in $\Omega$ for $\Leb{1}$-a.e. $\eps > 0$. In the rest of the proof, we consider only such good values of $\eps$. In addition, the assumption ${\rm Tr}_{\de\Omega}(u_1) \ge  {\rm Tr}_{\de\Omega}(u_2)$ $\Haus{n-1}$-a.e. on $\partial \Omega$ implies ${\rm Tr}_{\de \Omega}(\varphi_\eps) = \eps$ $\Haus{n-1}$-a.e. on $\partial \Omega$, and therefore, by continuity, we deduce that $\varphi_\eps(x) \le 2 \eps$ for all $x \in \Omega$ with sufficiently small distance from $\partial \Omega$. Since the functions $u_1, u_2$ are continuous, then $D u_i$ has no jump part, and so we get $(T_i, Du_i)_\lambda = (T_i, Du_i)_*$ for any Borel function $\lambda : \Omega \to [0, 1]$ and $i =1,2$, and we set $(T_i, Du_i) := (T_i, Du_i)_*$ for more simplicity. We claim that 
\begin{equation} \label{eq:sign_diff_pairing}
(T_1 - T_2, D(u_2 - u_1)) \le 0
\end{equation}
Indeed, thanks to the bilinearity of the standard pairing, \eqref{divergence_eq2_1_2} and \eqref{eq:pairlambda-vs-area} with $a = 1$, we conclude that
\begin{align*}
(T_1 - T_2, D(u_2 - u_1)) & = - (T_1, Du_1) - (T_2, Du_2) + (T_1, Du_2) + (T_2, Du_1) \\
& = (T_1, Du_2) - \left ( \sqrt{1 + |Du_2|^2} - \sqrt{1-|T_1|^2} \right )  + \\
& + (T_2, Du_1) - \left (\sqrt{1+|Du_1|^2} - \sqrt{1-|T_2|^2} \right ) \le 0.
\end{align*}
In addition, the continuity of $u_1$ and $u_2$ implies that $|D u_i|(\partial^* A_\eps) =0$ for $i = 1, 2$. Given that
\begin{equation*}
\varphi_\eps = \chi_{A_\eps} (u_2 - u_1) + \eps \chi_{\Omega \setminus A_\eps},
\end{equation*}
we exploit the linearity of the standard pairing and \cite[Proposition 4.11]{crasta2017anzellotti} to see that
\begin{align*}
(T_1 - T_2, D\varphi_\eps) & = (T_1 - T_2, D ( \chi_{A_\eps} (u_2 - u_1) ) ) + (T_1 - T_2, D (\eps \chi_{\Omega \setminus A_\eps}) ) \\ 
& = \chi_{A_\eps}^* (T_1 - T_2, D(u_2 - u_1)) + (u_2 - u_1)^* (T_1 - T_2, D \chi_{A_\eps}) - \eps (T_1 - T_2, D \chi_{A_\eps}) \\
& = \chi_{A_\eps^1} (T_1 - T_2, D(u_2 - u_1)) + (u_2 - u_1) (T_1 - T_2, D \chi_{A_\eps}) - \eps (T_1 - T_2, D \chi_{A_\eps}) \\
& = (T_1 - T_2, D(u_2 - u_1)) \restrict A_\eps^1,
\end{align*}
since, on the one hand, the pairing $(T_1 - T_2, D(u_2 - u_1))$ does not see $\partial^* A_\eps$, due to \eqref{eq:pairing_estimate_lambda} and the fact $|D (u_1 - u_2)|(\partial^* A_\eps) =0$, and, on the other hand, $(u_2 - u_1)$ is continuous and $u_2 - u_1 = \eps$ on $\partial^* A_\eps$, which is where the pairing $(T_1 - T_2, D \chi_{A_\eps})$ is concentrated, again by \eqref{eq:pairing_estimate_lambda}.
Therefore, given that $A_\eps \subset A_\eps^1$, we deduce that
\begin{equation} \label{eq:pairing_max_princ_1}
0 \ge (T_1 - T_2, D(u_2 - u_1)) \restrict A_\eps \ge (T_1 - T_2, D(u_2 - u_1)) \restrict A_\eps^1 = (T_1 - T_2, D\varphi_\eps).
\end{equation}
Now, since $\Omega$ is an open bounded set with Lipschitz boundary, there exists a family of bounded open sets with smooth boundary $(\Omega_k)_{k \in \N}$ such that 
\begin{equation*}
\Omega_k \subset \Omega_{k+1}, \  \bigcup_{k = 0}^{+\infty} \Omega_k = \Omega \ \text{ and } \Per(\Omega_k) \to \Per(\Omega)
\end{equation*}
(see \cite[Theorem 1.1]{MR3314116} and the subsequent discussion).
In particular, there exists $k_0 \in \N$ such that, for all $k \ge k_0$, we have 
\begin{equation*}
\eps \le \varphi_\eps \le 2 \eps \ \text{ on } \partial \Omega_k.
\end{equation*}
Hence, for all $k \ge k_0$ we exploit \eqref{eq:Leibniz_lambda_gen} and \eqref{eq:GG_1} to obtain
\begin{align*}
(T_1 - T_2, D\varphi_\eps)(\Omega_k) & = \div( (T_1 - T_2) \varphi_\eps)(\Omega_k) - \int_{\Omega_k} \varphi_\eps \, d \div (T_1 - T_2) \\
& = - \int_{\partial \Omega_k} \varphi_\eps {\rm Tr}^i(T_1 - T_2, \partial \Omega_k) \, d \Haus{n-1} + \int_{\Omega_k} \varphi_\eps \, d \div(T_2 - T_1) \\
& \ge - 4 \eps \Per(\Omega_k) + \int_{\Omega_k} \varphi_\eps \, d (\mu_2 - \mu_1) \ge - 4 \eps \Per(\Omega_k),
\end{align*}
thanks to the fact that $\mu_2 \ge \mu_1$ and that 
$$\|{\rm Tr}^i(T_1 - T_2, \partial^* \Omega_k)\|_{L^\infty(\partial \Omega_k; \Haus{n-1})} \le \|T_1 - T_2\|_{L^{\infty}(\Omega_k; \R^n)} \le \|T_1\|_{L^\infty(\Omega; \R^n)} + \|T_2\|_{L^\infty(\Omega; \R^n)} \le 2.$$ 
All in all, we obtain
\begin{equation*}
0 \ge (T_{1}-T_{2}, D\varphi_\eps)(\Omega_k) \ge - 4 \eps \Per(\Omega_k),
\end{equation*}
so that, by taking the limit as $k \to + \infty$, we conclude that 
\begin{equation*}
0 \ge (T_{1}-T_{2}, D\varphi_\eps)(\Omega) \ge - 4 \eps \Per(\Omega),
\end{equation*}
which, thanks to \eqref{eq:pairing_max_princ_1}, implies
\begin{equation*}
0 \ge (T_1 - T_2, D(u_2 - u_1))(A_\eps) \ge - 4 \eps \Per(\Omega)
\end{equation*}
Finally, by taking the limit as $\eps \to 0$ (more precisely, along a suitable sequence $(\eps_j)_j$ for which the sets $A_{\eps_j}$ have finite perimeter), we conclude that
\begin{equation*}
(T_1 - T_2, D(u_2 - u_1))(A_0) = 0, 
\end{equation*}
which is well posed since $A_0$ is open, and, combined with \eqref{eq:sign_diff_pairing}, implies
\begin{equation*}
(T_1 - T_2, D(u_2 - u_1)) = 0 \ \text{ on } A_0.
\end{equation*}
The bilinearity of the pairing yields
\begin{equation*}
(T_2, Du_1) - (T_2, Du_2) =  (T_1, Du_1) - (T_1, Du_2) \ \text{ on } A_0,
\end{equation*}
and then we exploit \eqref{divergence_eq2_1_2} and Theorem \ref{lemma:pairlambda-vs-area} to obtain
\begin{equation*}
0 \ge (T_2, Du_1) - \sqrt{1+|Du_1|^2} + \sqrt{1-|T_2|^2} \Leb{n} = \sqrt{1+|Du_2|^2} - \sqrt{1-|T_1|^2} \Leb{n} - (T_1, Du_2) \ge 0 \ \text{ on } A_0.
\end{equation*}
In particular, this means that both sides of the equation are identically zero on the open set $A_0$. Hence, if we add the right hand side to \eqref{divergence_eq2_1_2} in the case $i=1$ we get
\begin{equation*}
\sqrt{1+|Du_1|^2} + \sqrt{1+|Du_2|^2} - (T_1, D(u_1 + u_2))  - 2\sqrt{1-|T_1|^2} \Leb{n} = 0 \ \text{ on } A_0.
\end{equation*}
On the other hand, Theorem \ref{lemma:pairlambda-vs-area} implies that 
\begin{equation*}
\frac{1}{2} (T_1, D(u_1 + u_2)) = \left ( T_1, D \left ( \frac{u_1 + u_2}{2} \right ) \right ) \le \sqrt{1 + \left | D \left (\frac{u_1 + u_2}{2} \right ) \right |^2} - \sqrt{1 - |T_1|^2} \Leb{n} \ \text{ on } \Omega.
\end{equation*}
All in all, we conclude that
\begin{equation*}
\frac{\sqrt{1+|Du_1|^2} + \sqrt{1+|Du_2|^2}}{2} \le \sqrt{1 + \left | D \left (\frac{u_1 + u_2}{2} \right ) \right |^2} \ \text{ on } A_0.
\end{equation*}
Therefore, by convexity, this implies that $$D u_1 = D u_2 \ \text{ on } A_0,$$ which means $$D \varphi_0 = 0 \ \text{ on } A_0,$$ where $\varphi_0 := \max\{u_2 - u_1, 0\} \in BV(\Omega) \cap C^0(\Omega)$. Thus, $\varphi_0$ is constant on $\Omega$; however, since ${\rm Tr}_{\de\Omega}(\varphi_0)(x) = 0$ for $\Haus{n-1}$-a.e. $x \in \partial \Omega$ by assumption, we conclude that $\varphi_0 = 0$ in $\Omega$, and so $u_1 \ge u_2$ in $\Omega$.
\end{proof}

As a consequence, we obtain the following uniqueness result.

\begin{corollary}[Uniqueness for continuous weak solutions]\label{corol:uniqueness}
Let $\mu\in \MH(\Omega)$ be an admissible measure. Let $u_{1},u_{2} \in BV(\Omega) \cap C^0(\Omega)$ be weak solutions of \eqref{eq:PMCM} such that ${\rm Tr}_{\de\Omega}(u_1) =  {\rm Tr}_{\de\Omega}(u_2)$ $\Haus{n-1}$-a.e. on $\partial \Omega$. Then $u_1 = u_2$ in $\Omega$.
\end{corollary}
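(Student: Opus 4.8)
The plan is to deduce the statement directly from the maximum principle, Theorem \ref{thm:maxprinc}, applied twice with the roles of $u_1$ and $u_2$ interchanged. Since $u_1$ and $u_2$ are weak solutions of \eqref{eq:PMCM} in the sense of Definition \ref{def:PMCM}, there exist vector fields $T_1, T_2 \in \DM^{\infty}(\Omega)$ such that the pairs $(u_i, T_i)$ satisfy \eqref{L_infty_bound_eq}, \eqref{pairing_eq}, and \eqref{divergence_eq} with $\div T_i = \mu$ for $i = 1, 2$. In particular, all the structural hypotheses of Theorem \ref{thm:maxprinc} are met: $u_i \in BV(\Omega) \cap C^0(\Omega)$, the measures involved coincide with the single admissible measure $\mu$, and the trivial inequality $\mu \le \mu$ holds.

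First I would apply Theorem \ref{thm:maxprinc} with $\mu_1 = \mu_2 = \mu$, with $(u_1, T_1)$ and $(u_2, T_2)$ in the respective slots. Since by assumption ${\rm Tr}_{\de\Omega}(u_1) = {\rm Tr}_{\de\Omega}(u_2)$ $\Haus{n-1}$-a.e. on $\partial\Omega$, in particular ${\rm Tr}_{\de\Omega}(u_1) \ge {\rm Tr}_{\de\Omega}(u_2)$, so Theorem \ref{thm:maxprinc} yields $u_1 \ge u_2$ in $\Omega$. Then I would apply the same theorem a second time, now swapping the roles, i.e. taking $(u_2, T_2)$ in the first slot and $(u_1, T_1)$ in the second: again $\mu \le \mu$, and ${\rm Tr}_{\de\Omega}(u_2) \ge {\rm Tr}_{\de\Omega}(u_1)$ holds (with equality), so we obtain $u_2 \ge u_1$ in $\Omega$. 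Combining the two inequalities gives $u_1 = u_2$ in $\Omega$, as claimed.

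There is no real obstacle here: the corollary is an immediate formal consequence of the maximum principle. The only point that warrants a word of care is to check that the hypotheses of Theorem \ref{thm:maxprinc} are symmetric in the two solutions once the prescribed measure is the same, which is evident since $\mu \le \mu$ trivially and the continuity, $BV$-regularity, and divergence-measure structure are imposed identically on both $(u_1, T_1)$ and $(u_2, T_2)$.
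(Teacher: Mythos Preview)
Your proposal is correct and matches the paper's approach exactly: the corollary is stated as an immediate consequence of Theorem \ref{thm:maxprinc}, with no separate proof given, precisely because the two-sided application you describe is the only thing to do.
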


\begin{remark}
We notice that Theorem \ref{thm:maxprinc} and Corollary \ref{corol:uniqueness} hold true also under the assumption $u_1, u_2 \in W^{1, 1}(\Omega)$. Indeed, for Sobolev functions we have
\begin{equation*}
(T_j, Du_j)_\lambda = (T_j \cdot \nabla u_j) \Leb{n},
\end{equation*}
for $j = 1, 2$ and any Borel function $\lambda : \Omega \to [0,1]$ (see \cite[Remark 4.8]{crasta2019pairings}), so that the pairings are absolutely continuous. Hence, it is enough that the sets $A_\eps = \{ u_2 - u_1 > \eps\}$ are measurable for all $\eps \ge 0$, as well as of finite perimeter for $\Leb{1}$-a.e. $\eps > 0$. In addition, the functions $u_1, u_2$ have no jumps, so that $u_2^* - u_1^* = \tilde{u}_2 - \tilde{u}_1 = \eps$ $\Haus{n-1}$-a.e. on $\partial^* A_\eps$, for all such good values of $\eps > 0$. Thus, the proof of Theorem \ref{thm:maxprinc} can be easily adapted to the case $u_1, u_2 \in W^{1, 1}(\Omega)$, immediately yielding the analogous uniqueness result for Sobolev solutions\footnote{Note that this follows also from testing directly with $\max\{u_{2}-u_{1},0\}$, as done in the classical (smooth) case.}.
\end{remark}

\medskip

\section{Examples}\label{sec:example}

\subsection{The PMCM equation in dimension 1} \label{sec:n_1}

Let $n = 1$. In the one-dimensional case, we assume $\Omega$ to be an open bounded interval.
It is not difficult to notice that the equation $$\div T = \mu \text{ on } \Omega$$ can be explicitly solved: indeed, the general solution is 
\begin{equation}\label{eq:gensol1d}
T(x) = \mu((- \infty, x) \cap \Omega) + c, \text{ for } c \in \R\,.
\end{equation}
In addition, we clearly have $T \in BV(\Omega)$, since $\DM^\infty(\Omega) = BV(\Omega)$ for $n =1$. Given $u \in BV(\Omega)$, we denote by $u'$ the density of the absolutely continuous part of $Du$; that is,
$$Du = u' \Leb{1} + D^s u.$$
Then, Theorem \ref{thm:Tuformula} implies that any couple $(u, T) \in BV(\Omega) \times BV(\Omega)$ which satisfies \eqref{L_infty_bound_eq}, \eqref{divergence_eq} and \eqref{pairing_eq} must also satisfy the following identity:
\begin{equation} \label{eq:Tu_dim1}
T(x) = \frac{u'(x)}{\sqrt{1 + (u'(x))^2}} \ \text{ for } \Leb{1}\text{-a.e. } x \in \Omega.
\end{equation}
In addition, \eqref{eq:T_nabla_uformula} yields
\begin{equation}\label{eq:uprimoT}
u'(x) = \frac{T(x)}{\sqrt{1 - (T(x))^2}} \ \text{ for } \Leb{1}\text{-a.e. } x \in \Omega\,.
\end{equation}
As for the singular parts of $Du$, they satisfy \eqref{eq:pairingextreme}, as noticed in Remark \ref{rem:pairing_decomp_singular}. In particular, since $T \in BV(\Omega)$, the first equation in \eqref{eq:pairingextreme} reduces to
\begin{equation*} 
\widetilde{T} \, D^c u = |D^c u| \ \text{ on } \Omega,
\end{equation*}
thanks to \cite[Theorem 3.3 and Remark 3.4]{crasta2017anzellotti}, which easily implies 
\begin{equation} \label{eq:Cantor_BV_one_dim}
|\widetilde{T}(x)| = 1 \ \text{ for } |D^c u|\text{-a.e. } x \in \Omega.
\end{equation} 
As for the second equation in \eqref{eq:pairingextreme}, we exploit \eqref{eq:1dpairingj1} and \cite[Remark 4.9]{crasta2019pairings} to conclude that \eqref{eq:convex_comb_eq_1_traces} becomes
\begin{equation} \label{eq:convex_comb_eq_1_traces_1_dim}
(\lambda T^e_{J_u} + (1-\lambda) T^i_{J_u} ) \nu_u = 1 \quad |D^j u|\text{-a.e.,}
\end{equation}
where $T^i_{J_u}$ and $T^e_{J_u}$ are the interior and exterior traces of $T$ on $J_u$ in the $BV$ sense. Moreover, since we are on $\R$, at every $x_0 \in J_u$ these traces coincide with either $T(x_0^-) = \lim_{\eps \to 0^+} T(x_0 - \eps)$ or $T(x_0^+) = \lim_{\eps \to 0^+} T(x_0 + \eps)$, according to the orientation given by $\nu_u(x_0) \in \{\pm 1\}$.

A striking consequence of this fact is the following convexity result when $\mu$ is nonnegative (by a simple sign change, an analogous concavity result holds when $\mu$ is non-positive).

\begin{theorem} \label{thm:continuity_one_dim}
Let $\Omega = (a,b)$ with $a<b\in \R$. Let $\mu \in \Mm(\Omega)$ be nonnegative and satisfying \eqref{eq:necess_cond}. Assume that $u\in BV(\Omega)$ and $T\in \DM^{\infty}(\Omega)$ satisfy \eqref{L_infty_bound_eq}, \eqref{divergence_eq} and \eqref{pairing_eq} for $\lambda = \lambda_\mu$. Then $u$ is convex on $\Omega$, and therefore locally Lipschitz. In addition, if $u_1, u_2$ are weak solutions of \eqref{eq:PMCM} satisfying $u_1(a^{+}) = u_2(a^{+})$ and $u_2(b^{-}) = u_2(b^{-})$, then $u_1 = u_2$ on $\Omega$.
\end{theorem}
\begin{proof}
We notice that the necessary condition \eqref{eq:necess_cond} implies that $\mu \in \PB_1(\Omega)$ and therefore allows us to exploit Proposition \ref{prop:adm_PB_div}(2) to conclude that $\mu \in BV(\Omega)^*$, so that $\mu$ is admissible, since $\mu \ge 0$. By \eqref{eq:gensol1d}, we have $T(x) = \mu((a, x)) + c$, for some constant $c \in \R$, so that $T$ is left-continuous at every $x \in \Omega$, continuous at $\Leb{1}$-a.e. $x \in \Omega$ (due to the non-concentration property of Radon measures), and non-decreasing. In particular, this means that \eqref{L_infty_bound_eq} implies $|T(x)| \le 1$ for all $x \in \Omega$.
Now assume by contradiction that there exists $x_{0}\in J_{u}$. We notice that $\mu \ge 0$ implies $\lambda_\mu = 0$, so that \eqref{eq:convex_comb_eq_1_traces} yields
\[
{\rm Tr}^i(T,J_{u})(x_{0}) =1\,,
\]
which means that either $T(x_{0}^{+}) = 1$, if $\nu_u(x_0) = 1$, or $T(x_{0}^{-}) = - 1$, if $\nu_u(x_0) = - 1$, see \eqref{eq:convex_comb_eq_1_traces_1_dim} and the subsequent remarks. However, $T$ is non-decreasing and $|T(x)|\le 1$ for all $x \in \Omega$. Hence, if we had $T(x_{0}^{+}) = 1$ we would get $T(x) = 1$ for all $x \in (x_{0},b)$, which is impossible because it would mean $u' = +\infty$ $\Leb{1}$-a.e. on $(x_{0},b)$ according to \eqref{eq:uprimoT}. If instead $T(x_{0}^{-}) = - 1$, then $T(x) = -1$ for all $x \in (a, x_{0})$, yielding a similar contradiction. This proves that $J_{u}$ is empty, hence $u$ is continuous on $\Omega$. Analogously, let us assume there exists a Borel set $S$ such $|D^{c}u|(S) > 0$, then by \eqref{eq:Cantor_BV_one_dim} for $|D^{c}u|$-a.e. $y \in S$ we have $|\widetilde{T}(y)| = 1$. Arguing as above, we are led to the same contradiction. Therefore, we conclude that also $D^c u = 0$. This means that $Du = u' \Leb{1}$, where $u'$ satisfies \eqref{eq:uprimoT}, and, given that $T$ is non-decreasing, it is easy to see that also $u'$ is non-decreasing, since the map $$(-1, 1) \ni \xi \to \frac{\xi}{\sqrt{1 - \xi^2}}$$
is non-decreasing. This fact, in turn, implies that $u$ is convex and so locally Lipschitz. Finally, if $u_1, u_2$ are weak solutions of \eqref{eq:PMCM} with the same trace on $\partial \Omega = \{a, b\}$, then Corollary \ref{corol:uniqueness} implies $u_1 = u_2$ on $\Omega$.
\end{proof}

We now provide two examples of discontinuous and non-unique $1$-dimensional solutions.

\begin{example} \label{eq:one_dim_discont}
\rm
Let us set $\Omega = (-1,1)$ and $\mu = \delta_{0} - \frac{1}{2\sqrt{x}} \Leb{1}\restrict (0,1)$. Clearly, $\mu$ is admissible, since $\mu^+ = D \chi_{(0, 1)}$ and $\mu^- = f' \Leb{1}$, for $f(x) = \sqrt{x} \, \chi_{(0,1)}(x)$, so that $\mu^+$ and $\mu^-$ satisfy \eqref{eq:necess_cond}, and therefore Proposition \ref{prop:adm_PB_div}(5) implies $\mu^+, \mu^- \in BV(\Omega)^*$. Then, we notice that, if we define 
\[
T(x) = \mu\big((-1,x)\big) = 
\begin{cases}
0 & \text{if }x\in (-1,0]\\
1-\sqrt{x} & \text{if }x\in (0,1)\,,
\end{cases}
\]
we have $\div T = \mu$ and $\|T\|_{L^{\infty}(\Omega)} =1$. We also fix the Dirichlet conditions $u(x)\equiv 0$ if $x\le -1$ and $u(x) \equiv h$ if $x\ge 1$, for some given $h>0$ to be specified later. If $u\in BV(\Omega)$ is such that $(u,T)$ satisfies \eqref{pairing_eq}, then the absolutely continuous part of $Du$, denoted as $u'$, satisfies
\[
u'(x) = \frac{T(x)}{\sqrt{1-|T(x)|^{2}}} = 
\begin{cases}
0 & \text{if }x\in (-1,0)\\
\ds\frac{1-\sqrt{x}}{\sqrt{2\sqrt{x} - x}} & \text{if }x\in (0,1)\,.
\end{cases}
\]
In principle, the previous formula holds for a.e. $x\in (-1,1)$, however, $u$ is a classical - and thus smooth - solution on $(-1,0)\cup (0,1)$, therefore it is a pointwise identity on this subdomain. Let us set $u_{-1} = \lim_{x\to -1^{+}}u(x)$ and $u_{1} = \lim_{x\to 1^{-}}u(x)$, then we have
\[
u(x) = \begin{cases}
u_{-1} & \text{if }x\in (-1,0)\\
u_{1} + \ds\int_{1}^{x}\frac{1-\sqrt{t}}{\sqrt{2\sqrt{t}-t}}\, dt & \text{if }x\in (0,1)\,.
\end{cases}
\]
Consequently, we obtain
\[
u(0^{-}) := \lim_{x\to 0^{-}}u(x) = u_{-1}
\]
and
\[
u(0^{+}) := \lim_{x\to 0^{+}}u(x) = u_{1} - \int_{0}^{1}\frac{1-\sqrt{t}}{\sqrt{2\sqrt{t}-t}}\, dt = u_{1} - 2+\pi/2\,.
\]
By elementary computations, one can check that, for $\lambda = \lambda_\mu = \chi_{(0, 1)}$, \eqref{pairing_eq} holds if and only if $u(0^{-})\le u(0^{+})$, which amounts to assume that
\[
u_{1} \ge 2-\pi/2 + u_{-1}.
\] 
Indeed, if $u(0^{+}) - u(0^{-}) > 0$, then $\nu_u(0) = 1$ so that we get ${\rm Tr}^i(T, \nu_u)(0) = T(0+) =1$, and this satisfies \eqref{eq:convex_comb_eq_1_traces_1_dim}; while, if $u(0^{+}) = u(0^{-})$, then $0$ is not a jump point and \eqref{eq:Cantor_BV_one_dim} holds trivially, since the set $\{0\}$ is negligible for the Cantor part of $Du$.
A posteriori, this will be satisfied if we choose $h$ large enough and select the optimal values $u_{-1}$ and $u_{1}$ to minimize $\Jj_{\mu}$. We can now fix any bounded interval $B$ such that $\Omega \Subset B$, set 
\begin{equation*}
\phi(x) := \begin{cases} 0  & \text{ if } x \le 0 \\
h x  & \text{ if } x \in (0, 1) \\
h & \text{ if } x \ge 1
\end{cases}
\end{equation*}
and compute the value of $\Jj_{\mu}[u]$ for $u \in \cS(\phi)$ thanks to Theorem \ref{thm:Jsol}:
\begin{align*}
\Jj_{\mu}[u] - |B \setminus \Omega| &= |u_{-1}| + |u_{1}-h| + \sqrt{1+|Du|^{2}}(\Omega) + \int_{\Omega} u^{\lambda}\, d\mu\\
&= |u_{-1}| + |u_{1}-h| + (Du,T)_{\lambda}(\Omega) + \int_{\Omega}\sqrt{1-|T(x)|^{2}}\, dx + \int_{\Omega} u^{\lambda}\, d\mu\\
&= |u_{-1}| + |u_{1}-h| + u_{1}T(1^-) - u_{-1}T(-1^+) + \int_{\Omega}\sqrt{1-|T(x)|^{2}}\, dx\\
&= |u_{-1}| + |u_{1}-h| + 1 + \int_{0}^{1}\sqrt{2\sqrt{t}-t}\, dt\\
&= |u_{-1}| + |u_{1}-h| + \pi/2 + 1/3\,.
\end{align*}
The functional is minimized by choosing $u_{-1} = 0$ and $u_{1}=h$, hence the solution necessarily attains the prescribed boundary values. But then one has
\[
u(0^{-}) = 0\qquad \text{and}\qquad u(0^{+}) = h - 2+\pi/2\,.
\]
It is therefore enough to choose $h>2-\pi/2$ to obtain a solution $u(x)$ which is discontinuous at $x=0$.
\end{example}

\begin{example} \label{ex:one_dim_non_uniq_non_Schmidt}
\rm
Let us consider $\Omega = (0,3)$ and 
\[
\mu = \delta_{1} + \delta_{2} - \frac{1}{2\sqrt{x-1}} \Leb{1}\restrict (1,2)- \frac{1}{2\sqrt{x-2}} \Leb{1}\restrict (2,3)\,.
\]
By arguments and calculations completely analogous to the ones exploited in the previous example, one defines
\[
T(x) = \begin{cases}
0 & \text{if }x\in (0,1)\\
1-\sqrt{x-1} & \text{if }x\in (1,2)\\
1-\sqrt{x-2} & \text{if }x\in (2,3)
\end{cases}
\]
and
\[
u(x) = \begin{cases}
a & \text{if }x\in (0,1)\\
b+f(x-1) & \text{if }x\in (1,2)\\
c+f(x-2) & \text{if }x\in (2,3)\,,
\end{cases}
\]
where 
\[
f(u) = \int_{0}^{u}\frac{1-\sqrt{t}}{\sqrt{2\sqrt{t}-t}}\, dt
\]
and $a,b,c\ge 0$ satisfy $a<b<c-f(1) = c-2+\pi/2$. Given the Dirichlet boundary condition $u(x)=0$ for $x<0$ and $u(x)=h$ for $x>3$, with $h>0$ sufficiently large, we find that:
\begin{itemize}
\item the pair $(u,T)$ satisfies \eqref{L_infty_bound_eq}, \eqref{divergence_eq} and \eqref{pairing_eq};
\item assuming $h>4-\pi$, the functional $\Jj_{\mu}$ is minimized when $a=0$ and $c = h-2+\pi/2$, hence the Dirichlet condition is attained in the classical sense; moreover, the value of the functional does not depend on the parameter $b$;
\item letting $b$ vary in the interval $(0,h-4+\pi)$ we obtain a one-parameter family of solutions, all with the same trace on the boundary of $\Omega$.
\end{itemize}

We stress that this non-uniqueness phenomenon, among solutions which attain the boundary data, can arise in dimension $1$ only when the measure $\mu$ has a varying sign, due to Theorem \ref{thm:continuity_one_dim}. The situation changes in dimension $2$ and higher, as demonstrated by Example \ref{ex:two_balls}.

Finally, we point out that a small variation of this example shows that a converse of Lemma \ref{lem:Schmidt_non_extremality} cannot hold; that is, that there exists an admissible non-extremal measure such that its positive or negative part is not non-extremal. Indeed, let us consider
\[
\mu = \frac{3}{2} \delta_{1} + \delta_{2} - \frac{1}{2\sqrt{x-1}} \Leb{1}\restrict (1,2)- \frac{1}{2\sqrt{x-2}} \Leb{1}\restrict (2,3)\,.
\]
Arguing as at the beginning of Example \ref{eq:one_dim_discont}, it is easy to see that $\mu$ is admissible. In addition, we have $\mu = \div T$ for 
\[
T(x) = \begin{cases}
- \frac{3}{4} & \text{if }x\in (0,1)\\
\frac{3}{4}-\sqrt{x-1} & \text{if }x\in (1,2)\\
\frac{3}{4}-\sqrt{x-2} & \text{if }x\in (2,3)
\end{cases},
\] 
so that, thanks to the Gauss-Green formula \eqref{eq:GG_1}, we deduce that $\mu$ is non-extremal, with $L = \frac{3}{4} = \|T\|_{L^\infty(\Omega)}$. On the other hand, $\mu^+ = \frac{3}{2} \delta_{1} + \delta_{2}$, and so, if we choose $E = \left (\frac{1}{2}, \frac{5}{2} \right )$, we get
\begin{equation*}
\mu^+(E^1 \cap \Omega) = \frac{3}{2} + 1 \le L P(E) = 2 L
\end{equation*}
if and only if $L > \frac{5}{4} > 1$, so that $\mu^+$ does not satisfy the non-extremality condition \eqref{eq:subcritical_cond}.
\end{example}

\subsection{Weak solutions with radial symmetry on annuli}
Considering that the one-dimensional case has been fully treated in Section \ref{sec:n_1}, we let $n\ge 2$, and construct/characterize weak solutions with radial symmetry on annuli. 
\medskip

Before proceeding it is worth observing that the extra assumptions of rotational symmetry of $\Omega$, of the prescribed measure $\mu$, and of the weak Dirichlet datum $\phi$, imply that the auxiliary functional $\Jj_{\mu}$ admits rotationally invariant solutions in $\cS_{\phi}$. 

\begin{proposition}\label{prop:rotinv}
Let $\Omega\subset \R^{n}$ be an open bounded set with Lipschitz boundary compactly contained in a ball $B$, let $\mu\in \tMm(\Omega)$ be non-extremal and such that $\mu_2 = 0$, and let $\phi\in W^{1,1}(B)$ be fixed. Assume further that $\Omega$, $\mu$, and $\phi$ are rotationally invariant. Then the functional $\Jj_{\mu}$ admits rotationally invariant minimizers on $\cS_{\phi}$.
\end{proposition}
\begin{proof}
This is a consequence of the results of Section \ref{sec:gamma}. Indeed by inspecting the proof of Proposition \ref{prop:muapprox} we easily see that $\mu$ can be approximated by a sequence of radial functions $\{\mu_{j}\}_{j\in \N}$ for which Theorem \ref{thm:gammaconv} can be applied, giving a sequence $\{u_{j}\}_{j\in \N}$ of minimizers of $\Jj_{\mu}$ on $\cS_{\phi}$ which are smooth in $\Omega$, and thus necessarily radial (this easily follows from the convexity of $\Jj_{\mu}$ when restricted to continuous $BV$-functions). Then, by Proposition \ref{prop:gammacompact} we conclude the existence of a subsequence $\{u_{j_{k}}\}_{k\in\N}$ such that $u_{j_{k}}\to u \in \cS_{\phi}$ radial, and thus $\Gamma$-convergence ensures that $u$ is a radial minimizer of $\Jj_{\mu}$ on $\cS_{\phi}$. 
\end{proof}

\begin{remark}
It is not clear how to exclude the existence of non-radial minimizers of $\Jj_{\mu}$ on $\cS_{\phi}$ under the rotational-invariance assumptions of Proposition \ref{prop:rotinv} (owing to Theorem \ref{thm:maxprinc}, we know that such non-radial minimizers must have jump discontinuities in $\Omega$). Another related problem is to show that \emph{weak solutions are variational solutions} in the following sense. Assume that $u, v$ are a weak solutions of \eqref{eq:PMCM} with the same trace on $\de\Omega$, then is it true that $\Jj_{\mu}[u] = \Jj_{\mu}[v]$? These problems seem difficult even in the $2$-dimensional case with rotationally invariant data.
\end{remark}

\medskip

\begin{example}
\rm
We start by characterising radial solutions of \eqref{eq:PMCM} on annuli of the form $\Omega = B_{r_{3}}\setminus \overline{B_{r_{1}}}$ for given $0<r_{1}<r_{3}$, and where $\mu = \mu_{2}\Haus{n-1}\restrict \de B_{r_{2}}$ for some $r_{2} \in (r_{1},r_{3})$ and $\mu_{2}\in \R$. We thus assume that $u\in BV(\Omega)$ is a radial solution. We conveniently write $r=|x|$ for $x\in \Omega$, and denote by $u(r)$ the one-dimensional profile of the solution $u(x)$ (with a slight abuse of notation). Moreover, we denote by $\nu_{i}$ the outer unit normal to $\de B_{r_{i}}$.

Since $\mu = 0$ on $\Omega\setminus \de B_{r_{2}}$, the function $u$ is smooth in $\Omega\setminus \de B_{r_{2}}$ and therefore the vector field $T \in \DM^{\infty}(\Omega)$, such that the pair $(u,T)$ satisfies \eqref{L_infty_bound_eq}, \eqref{divergence_eq} and \eqref{pairing_eq}, 
satisfies \eqref{eq:Tuformula} for all $x\in \Omega\setminus \de B_{r_{2}}$, as well as \eqref{eq:T_nabla_uformula}. 
Moreover, by \eqref{divergence_eq} coupled with the radial symmetry of $u$, we infer that 
\begin{equation}\label{eq:TDu}
T(x) = \gamma_{i} \frac{x}{|x|^{n}}\qquad \forall\, x\in B_{r_{i+1}}\setminus \overline{B}_{r_{i}},\quad i=1,2\,,
\end{equation}
for suitable constants $\gamma_{1},\gamma_{2}\in \R$. Since $|T(x)|<1$ for all $x\in \Omega\setminus \de B_{r_{2}}$, due to \eqref{eq:Tuformula}, we infer that 
\begin{equation}\label{eq:stimalai}
|\gamma_{i}| \le r_{i}^{n-1},\qquad i=1,2\,.
\end{equation}
Then we notice that
\begin{align*}
\div T &= \big({\rm Tr}^i(T, \de B_{r_{2}}) - {\rm Tr}^e(T, \de B_{r_{2}})\big)\, \Haus{n-1}\restrict \de B_{r_{2}}\\
&= \frac{\gamma_{2} - \gamma_{1}}{r_{2}^{n-1}}\, \Haus{n-1}\restrict \de B_{r_{2}}\,,
\end{align*}
where ${\rm Tr}^{i,e}(T, \de B_{r_{2}})$ denote the interior and exterior normal traces of $T$ on $\de B_{r_{2}}$, defined as in Theorem \ref{thm:GG_app}.
Thus by combining the above formula with \eqref{divergence_eq}, we obtain
\begin{align}\label{eq:divTlai}
\gamma_{2} = \gamma_{1} + \mu_{2} r_{2}^{n-1}\,.
\end{align}
Hence, by \eqref{eq:TDu} and \eqref{eq:T_nabla_uformula} on $\Omega\setminus \de B_{r_{2}}$, we obtain 
\[
u'(r) = \frac{\gamma_{i}}{\sqrt{r^{2n-2} - \gamma_{i}^{2}}} \ \ \text{ for } r_{i}<r<r_{i+1}\,.
\]
In addition, we see that $D^c u = 0$ and $J_u \subset \de B_{r_2}$, since $u$ is smooth in $\Omega\setminus \de B_{r_{2}}$. Then, we notice that $\lambda_{\mu} \equiv 0$ if $\mu_{2}>0$, and $\lambda_\mu \equiv 1$ if $\mu_{2}<0$, so that, by exploiting \eqref{eq:convex_comb_eq_1_traces} in the case $\lambda = \lambda_\mu$, we get
\begin{align*}
\mu_{2}>0\ &\Longrightarrow\ {\rm Tr}^i(T, J_u) = 1 \ |D^j u|\text{-a.e. on } J_u \\
\mu_{2}<0\ &\Longrightarrow\ {\rm Tr}^e(T, J_u) = 1 \ |D^j u|\text{-a.e. on } J_u \,,
\end{align*}
Hence, if we assume that $u(r)$ jumps across $r=r_{2}$, then we have that
\begin{align*}
\mu_{2}>0\ \Longrightarrow\ \begin{cases}
\gamma_{2} = r_{2}^{n-1} & \text{if }\nu_{u} = \nu_{2}\\
\gamma_{1} = -r_{2}^{n-1} & \text{if }\nu_{u} = -\nu_{2}
\end{cases}
\ \Longrightarrow\ \gamma_{2} = r_{2}^{n-1} \text{ and } \nu_{u} = \nu_{2}
\end{align*}
because the second case is in contrast with the bound $|\gamma_{1}| \le r_{1}^{n-1} < r_{2}^{n-1}$. Similarly, we have
\begin{align*}
\mu_{2}<0\ \Longrightarrow\ \begin{cases}
\gamma_{1} = r_{2}^{n-1} & \text{if }\nu_{u} = \nu_{2}\\
\gamma_{2} = -r_{2}^{n-1} & \text{if }\nu_{u} = -\nu_{2}
\end{cases}
\ \Longrightarrow\ \gamma_{2} = -r_{2}^{n-1} \text{ and } \nu_{u} = -\nu_{2}\,.
\end{align*}
Summing up, if $u(r)$ is discontinuous at $r=r_{2}$ then we obtain
\[
\gamma_{2} = \sign(\mu_{2}) r_{2}^{n-1}
\]
and thus by \eqref{eq:divTlai} we find
\[
\gamma_{1} = (\sign(\mu_{2}) - \mu_{2}) r_{2}^{n-1}\,.
\]
This allows us to write the derivative $u'(r)$ on $(r_{1},r_{2})\cup (r_{2},r_{3})$ as
\[
u'(r) = \begin{cases}
\ds\frac{\sign(\mu_{2})(1-|\mu_{2}|) r_{2}^{n-1}}{\sqrt{r^{2n-2} - (1-|\mu_{2}|)^{2}r_{2}^{2n-2}}} & \text{if }r_{1}< r < r_{2}\\[10pt]
\ds\frac{\sign(\mu_{2}) r_{2}^{n-1}}{\sqrt{r^{2n-2} - r_{2}^{2n-2}}} & \text{if }r_{2}<r<r_{3}\,,
\end{cases}
\]
so that, since $u'(r)$ is defined for all $r\neq r_{2}$, the condition 
\[
1 - \left(\frac{r_{1}}{r_{2}}\right)^{n-1} \le |\mu_{2}| \le 1 + \left(\frac{r_{1}}{r_{2}}\right)^{n-1}
\]
must hold. We remark that the second inequality is also a necessary condition for the existence of solutions, while the first inequality says that, so that $u(r)$ jumps across $r_{2}$, the value of $|\mu_{2}|$ must be not too small. Moreover, the sign of $\mu$ determines whether the jump is ``up'' or ``down'', and one has 
\begin{equation*}
u'(r_{2}^{+}) = \begin{cases} + \infty & \text{ if } \mu_2 > 0, \\
- \infty & \text{ if } \mu_2 < 0,
\end{cases}
\end{equation*} 
in the right-limit sense.  We also note that the problem can be reduced to the discussion of the case $\mu_{2}>0$, because the other case can be obtained by passing from $u$ to $-u$. In particular, if we take
\begin{equation}\label{eq:mu2choice}
1-\left(\frac{r_{1}}{r_{2}}\right)^{n-1} < \mu_{2}< 1\,,
\end{equation}
we conclude that $u(r)$ can only jump ``up'' at $r=r_{2}$, and in this case $u(r)$ is monotone increasing on the whole $(r_{1},r_{3})$. 
\end{example}
\medskip

Now we add a further component to the measure $\mu$ to build a more sophisticated example which allows us to show the non-uniqueness of discontinuous solutions to \eqref{eq:PMCM} attaining the same Dirichlet boundary datum on $\de\Omega$.
 
\begin{example} \label{ex:two_balls}
\rm 
We redefine the annulus by fixing $0<r_{1}<r_{2}<r_{3}<r_{4}$, so that $\Omega = B_{r_{4}}\setminus\overline{B_{r_{1}}}$, then choose $\mu_{2}$ as in \eqref{eq:mu2choice} and $\mu_{3} = 1 - (r_{2}/r_{3})^{n-1}$. Then, we define
\[
\mu = \mu_{2}\Haus{n-1}\restrict \de B_{r_{2}} + \mu_{3} \Haus{n-1}\restrict \de B_{r_{3}}.
\] 
At the same time, we add a weak Dirichlet boundary condition at $r=r_{1},r_{4}$, that is, we prescribe boundary values $0$ at $r=r_{1}$ and $h$ at $r=r_{4}$, with $h>0$ to be chosen later. Then we consider a radial solution which additionally minimizes the functional $\Jj_{\mu}$ among radial $BV$ functions that vanish on $B_{r_{1}}$ and take the constant value $h$ outside $B_{r_{4}}$ (of course we implicitly assume that a ball $B$ of radius larger than $r_{4}$ has been fixed to properly define the functional $\Jj_{\mu}$). It is then clear that the values of the solution must be within the interval $[0,h]$. Indeed, the fact that $0 < \mu_2, \mu_3 < 1$ ensures that by truncating a function $u$ which goes above $h$ or below $0$ we get a better competitor.

We have
\begin{align*}
\Jj_{\mu}[u] &= |B\setminus \Omega| + \sqrt{1+|D u|^{2}}(\Omega \setminus (\de B_{r_{2}}\cup \de B_{r_{3}}))\\
&\qquad + n\omega_{n} \Big(r_{1}^{n-1} u(r_{1}^{+}) + r_{2}^{n-1}\big(u(r_{2}^{-})\mu_{2} + u(r_{2}^{+}) - u(r_{2}^{-})\big)\\
&\qquad\qquad + r_{3}^{n-1}\big(u(r_{3}^{-})\mu_{3} + u(r_{3}^{+}) - u(r_{3}^{-})\big)\\
&\qquad\qquad\qquad + r_{4}^{n-1}\big(h-u(r_{4}^{-})\big)\Big)\,.
\end{align*}
Thus, if we assume that $h - u(r_{4}^{-})>0$ we obtain for $0<\delta< h - u(r_{4}^{-})$ (also thanks to the choices of $\mu_{2}$ and $\mu_{3}$)
\begin{align*}
\Jj_{\mu}[u+\delta] - \Jj_{\mu}[u] &= n\omega_{n}\delta \big(r_{1}^{n-1} + r_{2}^{n-1}\mu_{2} + r_{3}^{n-1}\mu_{3} - r_{4}^{n-1}\big)\\
&< n\omega_{n}\delta \big(r_{1}^{n-1} + r_{3}^{n-1} - r_{4}^{n-1}\big)
\end{align*} 
which gives a contradiction with the minimization property of $u$ as soon as 
\begin{equation}\label{eq:rquattrocond}
r_{4}^{n-1} \ge r_{1}^{n-1} + r_{3}^{n-1}\,.
\end{equation}
From now on, we shall assume \eqref{eq:rquattrocond} as an extra condition involving the given radii, which guarantees that the boundary datum at $r=r_{4}$ is attained in the classical sense, that is $$u(r_{4}^{-}) = h.$$

With a similar computation, we claim that the boundary datum is attained also at $r=r_{1}$. Indeed, if we assume that $u(r_{1}^{+})>0$, we can fix $0<\delta<u(r_{1}^{+})$ and build the competitor $v_{\delta} := u - \delta \chi_{(r_{1},r_{2})}$, so that by easy computations we obtain
\begin{align*}
\Jj_{\mu}[v_{\delta}] - \Jj_{\mu}[u] &= n\omega_{n}\delta \big(r_{2}^{n-1}(1-\mu_{2}) - r_{1}^{n-1}\big) < 0
\end{align*}
because $\mu_{2}> 1 - (r_{1}/r_{2})^{n-1}$. This is a contradiction with the minimality of $u$, hence our claim is proved. Now, if $u(r)$ were continuous at both $r=r_{2},r_{3}$, we would deduce that 
\[
h = u(r_{4}^{-}) - u(r_{1}^{+}) = u(r_{4}^{-}) - u(r_{3}) + u(r_{3}) - u(r_{2}) + u(r_{2}) - u(r_{1}^{+}) \le C = C(r_{1},r_{2},r_{3},r_{4})
\]
where the constant $C$ results from a catenoidal bound of the global oscillation of the function on each interval $(r_{i}, r_{i+1})$. This means that if we choose $h$ larger than $C$, we obtain a contradiction with the simultaneous continuity of $u$ at $r=r_{2},r_{3}$. Thus we have that $u(r)$ jumps either at $r=r_{2}$, or at $r=r_{3}$ (and possibly at both radii). Let us assume that $u(r)$ jumps at $r=r_{2}$. In this case, by the above characterization of solutions in $B_{r_{3}}\setminus \overline{B_{r_{1}}}$ we deduce that 
\[
\gamma_{2} = r_{2}^{n-1}\quad\text{and}\quad \gamma_{3} = \gamma_{2} + \mu_{3}r_{3}^{n-1} = r_{3}^{n-1}\,,
\]
hence the restriction of $u(r)$ to the interval $(r_{2},r_{3})$ is uniquely determined up to suitable vertical translations, while the restriction of $u(r)$ to the interval $(r_{3},r_{4})$ is uniquely determined because we have proved that $u(r_{4}^{+}) = h$. It is then immediate to check that there exists a one-parameter family of solutions obtained by vertically translating the profile $u(r)$ restricted to $(r_{2},r_{3})$ until it preserves the global monotonicity of $u(r)$ on $(r_{1},r_{4})$. Moreover, the functional $\Jj_{\mu}$ attains the same value on this family of solutions. A similar conclusion is achieved if we assume that $u(r)$ jumps at $r=r_{3}$.

Summing up: under the conditions
\begin{itemize}
\item $1-(r_{1}/r_{2})^{n-1} < \mu_{2} < 1$ and $\mu_{3} = 1 - (r_{2}/r_{3})^{n-1}$,
\item $r_{4}^{n-1} \ge r_{1}^{n-1} + r_{3}^{n-1}$,
\item $h> C(r_{1},r_{2},r_{3},r_{4})$,
\end{itemize}
we can produce a one-parameter family of weak solutions to \eqref{eq:PMCM} attaining the same Dirichlet boundary datum on $\de\Omega$, see Figure \ref{fig:nonunique}. This non-uniqueness phenomenon confirms the optimality of the continuity assumption in Theorem \ref{thm:maxprinc}.
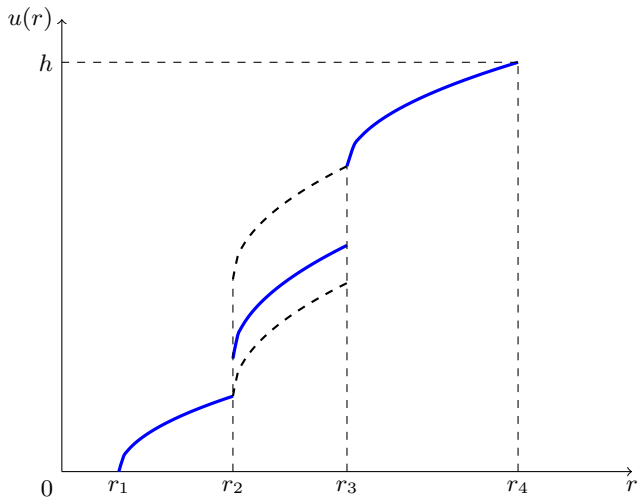
\begin{figure}[h]	
	\centering\footnotesize
	\begin{tikzpicture}[scale=1.5]
	\draw[->] (0,0) -- (5,0) node[below] {$r$};
	\draw[->] (0,0) -- (0,4) node[left] {$u(r)$};
	\node[below left] at (0,0) {$0$};
	\node[below] at (0.5,0) {$r_{1}$}; 
	\node[below] at (1.5,0) {$r_{2}$};
	\node[below] at (2.5,0) {$r_{3}$};
	\node[below] at (4,0) {$r_{4}$};
	\draw[scale=1,domain=0.5:1.5,smooth,variable=\x,blue,very thick] plot ({\x},{2*sqrt(\x-0.5)/3});
	\draw[scale=1,domain=1.5:2.5,smooth,variable=\x,blue,very thick] plot ({\x},{1+sqrt(\x-1.5)});
	\draw[scale=1,domain=1.5:2.5,smooth,variable=\x,thick,dashed] plot ({\x},{2/3+sqrt(\x-1.5)});
	\draw[scale=1,domain=1.5:2.5,smooth,variable=\x,thick,dashed] plot ({\x},{1.7+sqrt(\x-1.5)});
	\draw[dashed] (1.5,0) -- (1.5,1.7);
	\draw[scale=1,domain=2.5:4,smooth,variable=\x,blue,very thick] plot ({\x},{2.7+3*sqrt(\x-2.5)/4});
	\draw[dashed] (2.5,0) -- (2.5,2.7);
	\draw[dashed] (4,0) -- (4,{2.7+3*sqrt(1.5)/4});
	\draw[dashed] (4,{2.7+3*sqrt(1.5)/4}) -- (0,{2.7+3*sqrt(1.5)/4}) node[left] {$h$};
	\end{tikzpicture}
	\caption{The $1$-dimensional profile of a non-unique, radial solution of \eqref{eq:PMCM} with Dirichlet boundary conditions on $\Omega=B_{r_{4}}\setminus \overline{B_{r_{1}}}$. The solution attains the prescribed boundary values $0$ on $\de B_{r_{1}}$ and $h>0$ on $\de B_{r_{4}}$ in classical sense.}
		\label{fig:nonunique}
\end{figure}
\end{example}

\appendix

\section{A technical result on $BV$ functions}

We prove here a concentration property of the total variation of the gradient of a function $u\in BV_{\rm loc}(\Omega)$, which is needed in a covering argument within the proof of the lower semicontinuity of the functional $\Jj_\mu$, see Theorem \ref{thm:sci}. 

\begin{lemma}\label{lem:sopraupiueps}
Let $u\in BV_{\rm loc}(\Omega)$ and let $\eps>0$ be fixed, then for $|Du|$-a.e. $x\in \Omega$ one has 
\[
\liminf_{r\to 0^{+}}  \frac{\ds \int_{- \infty}^{u^{-}(x)-\eps} \Per\big(\{u>s\};B_{r}(x)\big)\, ds + \int_{u^{+}(x)+\eps}^{+\infty} \Per\big(\{u>s\};B_{r}(x)\big)\, ds }{|Du|(B_{r}(x))} = 0\,.
\]
\end{lemma}
\begin{proof}
We distinguish two cases. 

First, we consider the case $x\in \Omega\setminus S_{u}$, i.e., we assume $x$ in the support of the diffuse part of $Du$. In this case we have $u^+(x) = u^-(x) = \widetilde{u}(x)$. Here we follow the proof of \cite[Proposition 2.42]{AFP} coupled with \cite[Theorem 3.95]{AFP}. We introduce the rescaled function 
\[
u^{x,r}(y) = \frac{u(x+ry) - u_{B_{r}(x)}}{r^{1-n} |Du|(B_{r}(x))}\,,
\]
where $u_{B_{r}(x)}$ denotes the average of $u$ on $B_{r}(x)$. We observe that $u^{x,r}$ has zero average on $B_1$ and $|Du^{x,r}|(B_{1}) = 1$ for any $r>0$. Therefore the family of rescaled functions $\{u^{x,r}:\ r>0\}$ is sequentially relatively compact, as a consequence of Poincar\'e-Wirtinger inequality in $BV(B_{1})$. Then, we denote by $T(u,x)$ the collection of all sequential limits in the $L^{1}_{\rm loc}$-topology of $u^{x,r}$ as $r\to 0$. 
By \cite[Corollary 2.43, Theorem 2.44 and Theorem 3.95]{AFP} combined with the lower-semicontinuity of the total variation, we can assume w.l.o.g. that
\begin{itemize}
\item[(a)] $x$ is a Lebesgue point for $u$, i.e., $u_{B_{r}(x)} \to \widetilde{u}(x)$ as $r\to 0$;
\item[(b)] for any $t\in (0,1)$ there exists $v_{t}\in T(u,x)$ and $r_{i}\to 0$ as $i\to+\infty$, such that $u^{x,r_{i}}\to v_{t}$ in $L^{1}(B_{1})$, $Du^{x,r_{i}} \weakto Dv_{t}$ and $|Du^{x,r_{i}}| \weakto |Dv_{t}|$ in $\Mm(B_{1}; \R^n)$ and $\Mm(B_{1})$, respectively, and moreover 
\[
t^{n}\le |Dv_{t}|(\overline{B_{t}}) \le |Dv_{t}|(B_{1}) \le 1.
\]
\end{itemize}
We now argue by contradiction and assume there exist $\eta>0$ and $\overline{r}>0$, such that 
\begin{equation}\label{eq:stimacoareacontra}
\int_{\widetilde{u}(x)+\eps}^{+\infty} \Per\big(\{u>s\};B_{r}(x)\big)\, ds + \int_{-\infty}^{\widetilde{u}(x) - \eps} \Per\big(\{u>s\};B_{r}(x)\big)\, ds \ge \eta |Du|(B_{r}(x))
\end{equation} 
for all $0<r<\overline{r}$. Fix $t = (1-\eta/3)^{1/n}$ and consider $v=v_{t}\in T(u,x)$ and the infinitesimal sequence $r_{i}\to 0$ as in (b). We can thus find $M=M(\eta)\in \R$ such that considering the truncation $T_M(v)$ we have
\[
|DT_M(v)|(B_{1}) \ge 1-\eta/2\,.
\]
By the coarea formula, Fatou's lemma, and the lower-semicontinuity of the perimeter, we have that
\begin{align*}
1 &= \lim_{i\to+\infty} |Du^{x,r_{i}}|(B_{1})\\
&= \lim_{i\to+\infty}\int_{-\infty}^{+\infty} \Per(\{u^{x,r_{i}}>\tau\};B_{1})\, d\tau\\
&= \lim_{i\to+\infty}\int_{-\infty}^{-M} \Per(\{u^{x,r_{i}}>\tau\};B_{1})\, d\tau + \int_{-M}^{M} \Per(\{u^{x,r_{i}}>\tau\};B_{1})\, d\tau + \int_{M}^{+\infty} \Per(\{u^{x,r_{i}}>\tau\};B_{1})\, d\tau\\
&\ge \int_{-M}^{M} \Per(\{T_M(v)>\tau\};B_{1})\, d\tau + \liminf_{i\to+\infty} \int_{-\infty}^{-M} \Per(\{u^{x,r_{i}}>\tau\};B_{1})\, d\tau + \int_{M}^{+\infty} \Per(\{u^{x,r_{i}}>\tau\};B_{1})\, d\tau\\
&= |D T_M(v)|(B_{1}) + \liminf_{i\to+\infty} \int_{-\infty}^{-M} \Per(\{u^{x,r_{i}}>\tau\};B_{1})\, d\tau + \int_{M}^{+\infty} \Per(\{u^{x,r_{i}}>\tau\};B_{1})\, d\tau\\
&\ge 1-\eta/2 + \liminf_{i\to+\infty} \int_{-\infty}^{-M} \Per(\{u^{x,r_{i}}>\tau\};B_{1})\, d\tau + \int_{M}^{+\infty} \Per(\{u^{x,r_{i}}>\tau\};B_{1})\, d\tau\,.
\end{align*} 
We thus infer that
\[
\liminf_{i\to+\infty} \int_{-\infty}^{-M} \Per(\{u^{x,r_{i}}>\tau\};B_{1})\, d\tau + \int_{M}^{+\infty} \Per(\{u^{x,r_{i}}>\tau\};B_{1})\, d\tau \le \eta/2
\]
which, in terms of the function $u$, can be rewritten as
\begin{align}\nonumber
\liminf_{i\to+\infty} \frac{1}{|Du|(B_{r_{i}}(x))} & \Bigg( \int_{-\infty}^{u_{B_{r_{i}}(x)} - Mr_{i}^{1-n}|Du|(B_{r_{i}}(x))} \Per(\{u>s\};B_{r_{i}}(x))\, ds \\\label{eq:stimacoareaeta}
&\qquad\qquad + \int_{u_{B_{r_{i}}(x)} + Mr_{i}^{1-n}|Du|(B_{r_{i}}(x))}^{+\infty} \Per(\{u>s\};B_{r_{i}}(x))\, ds \Bigg) \le \eta/2\,.
\end{align}
On the other hand, we have 
\[
u_{B_{r_{i}}(x)} \pm Mr_{i}^{1-n}|Du|(B_{r_{i}}(x)) \to \widetilde{u}(x)\qquad \text{as $i\to+\infty$}\,,
\]
hence by \eqref{eq:stimacoareaeta} we finally deduce
\[
\liminf_{i\to+\infty} \frac{1}{|Du|(B_{r_{i}}(x))} \left ( \int_{-\infty}^{\widetilde{u}(x)-\eps} \Per(\{u>s\};B_{r_{i}}(x))\, ds + \int_{\widetilde{u}(x)+\eps}^{+\infty} \Per(\{u>s\};B_{r_{i}}(x))\, ds \right ) \le \eta/2\,,
\]
which is in contrast with \eqref{eq:stimacoareacontra}. This proves the thesis in this first case.

Then we consider the second case, i.e., $x\in J_{u}$. In this case we exploit the characterisation of $Du\restrict J_{u}$ and of the limit of the rescaled function $u_{x,r}(y) := u(x+ry)$ as $r\to 0$ (see \cite[Theorem 3.77]{AFP}). Indeed we have that $u_{x,r}\to v$ in $L^{1}_{\rm loc}(\R^{n})$ as $r\to 0$, with $v(y)$ defined for a.e. $y\in \R^{n}$ as
\[
v(y) = \begin{cases}
u^{+}(x) & \text{if }\langle y,\nu_{u}(x)\rangle \ge 0\\
u^{-}(x) & \text{otherwise}\,,
\end{cases}
\]
where $\nu_{u}$ is the Borel vector field defined for $|Du|$-almost every $x\in \Omega$ by 
\[
\nu_{u}(x) = \lim_{r\to 0}\frac{Du(B_{r}(x))}{|Du|(B_{r}(x))}\,.
\]
Since $|Du_{x,r}| \weakto |Dv|$ as $r\to 0$ in $\mathcal{M}_{\rm loc}(\R^n)$, we conclude that for almost all $R>0$ we have that $|Du_{x,r}|(B_{R}) \to |Dv|(B_{R})$ as $r\to 0$. Arguing as in the first case, this implies that
\begin{align*}
|Dv|(B_{R}) &= \int_{u^{-}(x)}^{u^{+}(x)} \Per(\{v>t\};B_{R})\, dt \le \int_{u^{-}(x)}^{u^{+}(x)} \liminf_{r\to 0}\Per(\{u_{x,r}>t\};B_{R})\, dt\\
&\le \liminf_{r\to 0} \int_{u^{-}(x)}^{u^{+}(x)} \Per(\{u_{x,r}>t\};B_{R})\, dt \le \liminf_{r\to 0} \int_{-\infty}^{+\infty} \Per(\{u_{x,r}>t\};B_{R})\, dt\\
&= \liminf_{r\to 0} |Du_{x,r}|(B_{R}) = |Dv|(B_{R})\,.
\end{align*}
Therefore all previous inequalities are equalities, and in particular, we infer that 
\[
\liminf_{r\to 0} \int_{-\infty}^{u^{-}(x)-\eps} \Per(\{u_{x,r}>t\};B_{R})\, dt + \int_{u^{+}(x)+\eps}^{+\infty} \Per(\{u_{x,r}>t\};B_{R})\, dt = 0\,,
\]
that is, 
\[
\liminf_{r\to 0} \int_{-\infty}^{u^{-}(x)-\eps} r^{1-n}\Per(\{u>t\};B_{Rr}(x))\, dt + \int_{u^{+}(x)+\eps}^{+\infty} r^{1-n}\Per(\{u>t\};B_{Rr}(x))\, dt = 0\,.
\]
But since $|Du|(B_{Rr})$ is asymptotic to $\omega_{n-1}(u^{+}(x)-u^{-}(x)) r^{n-1}$ as $r\to 0$, we deduce that
\[
\liminf_{r\to 0} \frac{1}{|Du|(B_{Rr}(x))} \left ( \int_{-\infty}^{u^{-}(x)-\eps} \Per(\{u>t\};B_{Rr}(x))\, dt + \int_{u^{+}(x)+\eps}^{+\infty} \Per(\{u>t\};B_{Rr}(x))\, dt \right ) = 0\,.
\]
This concludes the proof of the lemma, given that $|Du|(S_u \setminus J_u) = 0$ thanks to \cite[Lemma 3.76 and Theorem 3.78]{AFP}.
\end{proof}


\begin{thebibliography}{41}

\bibitem{ACM}
Luigi Ambrosio, Gianluca Crippa, and Stefania Maniglia.
\newblock Traces and fine properties of a ${BD}$ class of vector fields and
  applications.
\newblock In {\em Annales de la {F}acult{\'e} des sciences de {T}oulouse:
  {M}ath{\'e}matiques}, volume~14, pages 527--561, 2005.

\bibitem{AFP}
Luigi Ambrosio, Nicola Fusco, and Diego Pallara.
\newblock {\em Functions of bounded variation and free discontinuity problems}.
\newblock Oxford University Press, 2000.

\bibitem{Anzellotti_1983}
Gabriele Anzellotti.
\newblock Pairings between measures and bounded functions and compensated
  compactness.
\newblock {\em Annali di Matematica Pura ed Applicata}, 135(1):293--318, 1983.

\bibitem{braides2002gamma}
Andrea Braides.
\newblock {\em Gamma-convergence for Beginners}, volume~22.
\newblock Clarendon Press, 2002.

\bibitem{CF1}
Gui-Qiang Chen and Hermano Frid.
\newblock Divergence-measure fields and hyperbolic conservation laws.
\newblock {\em Archive for rational mechanics and analysis}, 147(2):89--118,
  1999.

\bibitem{comi2022representation}
Giovanni~E. Comi, Graziano Crasta, Virginia De~Cicco, and Annalisa Malusa.
\newblock Representation formulas for pairings between divergence-measure
  fields and ${B}{V}$ functions.
\newblock {\em Journal of Functional Analysis}, 286.1: 110192, 2024.

\bibitem{ComiLeo}
Giovanni~E. Comi and Gian~Paolo Leonardi.
\newblock Measures in the dual of $BV$: perimeter bounds and relations with divergence-measure fields.
\newblock {\em Nonlinear Analysis}, 251: 113686, 2025.

\bibitem{comi2017locally}
Giovanni~E. Comi and Kevin~R. Payne.
\newblock On locally essentially bounded divergence measure fields and sets of
  locally finite perimeter.
\newblock {\em Advances in Calculus of Variations}, 13.2: 179-217, 2020.

\bibitem{crasta2017anzellotti}
Graziano Crasta and Virginia De~Cicco.
\newblock Anzellotti's pairing theory and the {G}auss--{G}reen theorem.
\newblock {\em Advances in Mathematics}, 343:935--970, 2019.

\bibitem{crasta2022variational}
Graziano Crasta and Virginia De~Cicco.
\newblock On the variational nature of the Anzellotti pairing.
\newblock {\em arXiv preprint arXiv:2207.06469}, 2022.

\bibitem{crasta2019pairings}
Graziano Crasta, Virginia De~Cicco, and Annalisa Malusa.
\newblock Pairings between bounded divergence-measure vector fields and ${BV}$
  functions.
\newblock {\em Advances in Calculus of Variations}, 2020.

\bibitem{dai2012mean}
Qiuyi Dai, Neil~S Trudinger, and Xu-Jia Wang.
\newblock The mean curvature measure.
\newblock {\em Journal of the European Mathematical Society}, 14(3):779--800,
  2012.

\bibitem{dal1999renormalized}
Gianni Dal~Maso, Fran{\c{c}}ois Murat, Luigi Orsina, and Alain Prignet.
\newblock Renormalized solutions of elliptic equations with general measure
  data.
\newblock {\em Annali della Scuola Normale Superiore di Pisa-Classe di
  Scienze}, 28(4):741--808, 1999.

\bibitem{ekeland1999convex}
Ivar Ekeland and Roger Temam.
\newblock {\em Convex analysis and variational problems}.
\newblock SIAM, 1999.


\bibitem{FG84}
Robert Finn and Enrico Giusti.
\newblock On nonparametric surfaces of constant mean curvature.
\newblock {\em Ann Sc. Norm Sup Pisa}, 17:389--403, 1984.

\bibitem{FocardiSpadaro2018}
Matteo Focardi and Emanuele Spadaro.
\newblock On the measure and the structure of the free boundary of the lower
  dimensional obstacle problem.
\newblock {\em Archive for Rational Mechanics and Analysis}, 230:125--184,
  2018.

\bibitem{FocardiSpadaro2020}
Matteo Focardi and Emanuele Spadaro.
\newblock How a minimal surface leaves a thin obstacle.
\newblock In {\em Annales de l'Institut Henri Poincar{\'e} C, Analyse non
  lin{\'e}aire}, volume~37, pages 1017--1046. Elsevier, 2020.

\bibitem{Frid2}
Hermano Frid.
\newblock Divergence-measure fields on domains with {L}ipschitz boundary.
\newblock In {\em Hyperbolic Conservation Laws and Related Analysis with
  Applications}, pages 207--225. Springer, 2014.

\bibitem{Gerhardt1974}
Claus Gerhardt.
\newblock Existence, regularity, and boundary behaviour of generalized surfaces
  of prescribed mean curvature.
\newblock {\em Mathematische Zeitschrift}, 139:173--198, 1974.

\bibitem{giachetti2023bounded}
Daniela Giachetti, Francescantonio Oliva, and Francesco Petitta.
\newblock Bounded solutions for non-parametric mean curvature problems with nonlinear terms.
\newblock {\em The Journal of Geometric Analysis}, 34.9: 265, 2024.

\bibitem{Gia74}
Mariano Giaquinta.
\newblock On the {D}irichlet problem for surfaces of prescribed mean curvature.
\newblock {\em Manuscripta Math.}, 12:73--86, 1974.

\bibitem{giusti1972non}
Enrico Giusti.
\newblock Non-parametric minimal surfaces with discontinuous and thin
  obstacles.
\newblock {\em Archive for Rational Mechanics and Analysis}, 49(1):41--56,
  1972.

\bibitem{giusti1973CIME}
Enrico Giusti.
\newblock Minimal surfaces with obstacles.
\newblock In Enrico Bombieri, editor, {\em Geometric measure theory and minimal
  surfaces}, Centro Internaz. Mat. Estivo (C.I.M.E.), III ciclo, Varenna, pages
  119--153. Edizioni Cremonese, Rome, 1973.

\bibitem{Giu78}
Enrico Giusti.
\newblock On the equation of surfaces of prescribed mean curvature. {E}xistence
  and uniqueness without boundary conditions.
\newblock {\em Invent. Math.}, 46(2):111--137, 1978.

\bibitem{giusti1984minimal}
Enrico Giusti.
\newblock Minimal surfaces and functions of bounded variation.
\newblock {\em Monogr. Math.}, 80, 1984.

\bibitem{hu2010water}
David~L. Hu, Manu Prakash, Brian Chan, and John W.~M. Bush.
\newblock Water-walking devices.
\newblock In {\em Animal Locomotion}, pages 131--140. Springer, 2010.

\bibitem{LS18a}
Gian~Paolo Leonardi and Giorgio Saracco.
\newblock The prescribed mean curvature equation in weakly regular domains.
\newblock {\em NoDEA Nonlinear Differ. Equ. Appl.}, 25(2):9, 2018.

\bibitem{MR4385590}
Gian~Paolo Leonardi and Giorgio Saracco.
\newblock Rigidity and trace properties of divergence-measure vector fields.
\newblock {\em Adv. Calc. Var.}, 15(1):133--149, 2022.

\bibitem{maggi2012sets}
Francesco Maggi.
\newblock {\em Sets of finite perimeter and geometric variational problems: an
  introduction to {G}eometric {M}easure {T}heory}, volume 135 of {\em Cambridge
  Studies in Advanced Mathematics}.
\newblock Cambridge University Press, Cambridge, 2012.
\newblock An introduction to geometric measure theory.

\bibitem{massari1974esistenza}
Umberto Massari.
\newblock Esistenza e regolarit\`a delle ipersuperfici di curvatura media
  assegnata in {$\mathbb{R}^n$} delle ipersuperfici di curvatura media
  assegnata in {$\mathbb{R}^n$}.
\newblock {\em Archive for Rational Mechanics and Analysis}, 55(4):357--382,
  1974.

\bibitem{nitsche1969variational}
Johannes C.~C. Nitsche.
\newblock Variational problems with inequalities as boundary conditions or how
  to fashion a cheap hat for {Giacometti's brother}.
\newblock {\em Archive for Rational Mechanics and Analysis}, 35:83--113, 1969.

\bibitem{phuc2008characterizations}
Nguyen~Cong Phuc and Monica Torres.
\newblock Characterizations of the existence and removable singularities of
  divergence-measure vector fields.
\newblock {\em Indiana University mathematics journal}, pages 1573--1597, 2008.

\bibitem{Phuc_Torres}
Nguyen~Cong Phuc and Monica Torres.
\newblock Characterizations of signed measures in the dual of {$BV$} and
  related isometric isomorphisms.
\newblock {\em Ann. Sc. Norm. Super. Pisa Cl. Sci. (5)}, 17(1):385--417, 2017.

\bibitem{scheven2016bv}
Christoph Scheven and Thomas Schmidt.
\newblock ${B}{V}$ supersolutions to equations of 1-{L}aplace and minimal
  surface type.
\newblock {\em Journal of Differential Equations}, 261(3):1904--1932, 2016.

\bibitem{scheven2018dual}
Christoph Scheven and Thomas Schmidt.
\newblock On the dual formulation of obstacle problems for the total variation
  and the area functional.
\newblock In {\em Annales de l'Institut Henri Poincar{\'e} C, Analyse non
  lin{\'e}aire}, volume~35, pages 1175--1207. Elsevier, 2018.

\bibitem{MR3314116}
Thomas Schmidt.
\newblock Strict interior approximation of sets of finite perimeter and
  functions of bounded variation.
\newblock {\em Proc. Amer. Math. Soc.}, 143(5):2069--2084, 2015.

\bibitem{schmidt2023isoperimetric}
Thomas Schmidt.
\newblock Isoperimetric conditions, lower semicontinuity, and existence results
  for perimeter functionals with measure data.
\newblock {\em arXiv preprint arXiv:2302.13396}, 2023.

\bibitem{Silhavy1}
Miroslav {\v{S}}ilhav{\`y}.
\newblock Divergence measure fields and {C}auchy's stress theorem.
\newblock {\em Rendiconti del Seminario Matematico della Universit{\`a} di
  Padova}, 113:15--45, 2005.

\bibitem{ziemer1995nonhomogeneous}
William~K. Ziemer.
\newblock The nonhomogeneous minimal surface equation involving a measure.
\newblock {\em Pacific Journal of Mathematics}, 167(1):183--200, 1995.

\bibitem{Ziemer}
William~P. Ziemer.
\newblock {\em Weakly differentiable functions}, volume 120 of {\em Graduate
  Texts in Mathematics}.
\newblock Springer-Verlag, New York, 1989.
\newblock Sobolev spaces and functions of bounded variation.

\end{thebibliography}
\end{document}